\newcommand{\grad}{\nabla}
\newcommand{\calc}{\mathcal{C}}
\newcommand{\les}{\lesssim}
\newcommand{\MM}[1]{\ensuremath{\mathcal{M}}\left(#1\right)}
\newcommand{\norm}[1]{\left\| #1 \right\|}
\newcommand{\R}{\mathbb{R}}
\newcommand{\T}{\mathbb{T}}
\DeclareMathOperator{\supp}{\mathrm{supp}}
\DeclareMathOperator{\Div}{\mathrm{div}}
\DeclareMathOperator{\Id}{\mathrm{Id}}
\DeclareMathOperator{\curl}{\mathrm{curl}}
\newtheorem{theorem}{Theorem}[section]
\newtheorem{proposition}[theorem]{Proposition}
\newtheorem{lemma}[theorem]{Lemma}
\newtheorem*{lemma*}{Lemma}
\theoremstyle{definition}
\newtheorem{definition}[theorem]{Definition}
\newtheorem{remark}[theorem]{Remark}
\newtheorem{conjecture}{Conjecture}
\newcommand{\dpot}{{\mathsf d}}
\newcommand{\divR}{\mathcal{R}^*}
\newcommand{\UU}{\mathbb{U}}
\newcommand{\optwo}{\left(u-\alpha^2\Delta u\right)}
\newcommand{\RR}{{\mathring{R}}}
\newcommand{\WW}{\mathbb{W}}
\newcommand{\Ndec}{{\mathbf{N}_{\textnormal{dec}}}}
\newcommand{\dist}{\textnormal{dist}}
\newcommand{\qi}{_{q,i}}
\newcommand{\qiprime}{_{q,i'}}
\newcommand{\rhohalf}{\rho^{\sfrac{1}{2}}}
\newcommand{\Pneq}{\mathbb{P}_{\neq 0}}
\newcommand{\Peq}{\mathbb{P}_{=0}}
\numberwithin{equation}{section}
\newcounter{author}
\renewcommand*\author[1]{%
  \stepcounter{author}%
  \ifnum\c@author=1
    \gdef\@author{#1}%
  \else
    \xdef\@author{\unexpanded\expandafter{\@author\and#1}}%
  \fi
  \csgdef{author@\the\c@author}{#1}}
\newcommand*\email[1]{%
  \csgdef{email@\the\c@author}{#1}}
\newcommand*\address[1]{%
  \csgdef{address@\the\c@author}{#1}}
  \xdef\author@count{\the\c@author}%
\newcommand*\print@authors{%
  \ifnum\c@author>\author@count
  \else
    \print@author{\the\c@author}%
    \advance\c@author by 1
    \expandafter\print@authors
  \fi}
\newcommand*\print@author[1]{%
  \par\medskip
  \begin{tabular}{@{}l@{}}%
    \textsc{\csuse{address@#1}}\\
    \textit{E-mail address}:
    \href{mailto:\csuse{email@#1}}{\csuse{email@#1}}
  \end{tabular}}
\author{Rajendra Beekie}
\address{\small Courant Institute of Mathematical Sciences, New York University, 251 Mercer St., New York, NY 10012}
\email{beekie@cims.nyu.edu}
\author{Matthew Novack}
\address{\small {School of Mathematics, Institute for Advanced Study, 1 Einstein Dr., Princeton, NJ 08540, USA}}
\email{mdn@ias.edu}
\title{Non-conservative solutions of the Euler-$\alpha$ equations}
\begin{document}

\maketitle

\begin{abstract}
The Euler-$\alpha$ equations model the averaged motion of an ideal incompressible fluid when filtering over spatial scales smaller than $\alpha$.  We show that there exists $\beta>1$ such that weak solutions to the two and three dimensional Euler-$\alpha$ equations in the class $C^0_t H^\beta_x$ are not unique and may not conserve the Hamiltonian of the system, thus demonstrating flexibility in this regularity class. The construction utilizes a Nash-style intermittent convex integration scheme.  We also formulate an appropriate version of the Onsager conjecture for Euler-$\alpha$, postulating that the threshold between rigidity and flexibility is the regularity class $L^3_t B^{\sfrac{1}{3}}_{3,\infty,x}$.
\end{abstract}



\setcounter{tocdepth}{3}
\tableofcontents

\section{Introduction}
We consider the Lagrangian averaged Euler equations (LAE-$\alpha$ or Euler-$\alpha$) for the unknown velocity field $u$ and pressure $p$:
\begin{subequations}
    \label{eq:euler:alpha1}
    \begin{align}
    \partial_t \optwo + u\cdot \nabla \optwo + \optwo^j \nabla u^j + \nabla p &= 0   \\
    \Div u &= 0 \, .
    \end{align}
\end{subequations}
The parameter $\alpha > 0$ corresponds to a \emph{length scale}, and we pose the system for $(x,t) \in \mathbb{T}^n \times [0,T],  n = 2 \text{ or }3\, , T > 0$. Euler-$\alpha$ was originally derived within an Euler-Poincar\'e variational framework to  model the averaged motion of an ideal incompressible fluid when filtering over spatial scales smaller than $\alpha$ \cite{HolmMarsdenJerroldRatiu98a, HolmMarsdenJerroldRatiu98b}. It was later proven  that, like the Euler equations, \eqref{eq:euler:alpha1} have a deep geometric significance as the geodesic equations with respect to an $H^1$ metric on the diffeomorphism group of volume preserving flows \cite{Arnold66, Shkoller98}. We also note that the Euler-$\alpha$ equations are identical to the inviscid second-grade fluid equations, where $\alpha$ now represents the elastic response of the fluid (see \cite{DunnFosdick74} and references therein for more on second-grade fluids). The viscous variant of \eqref{eq:euler:alpha1} is known as the Lagrangian averaged Navier-Stokes equation (LANS-$\alpha$) or the viscous Camassa-Holm equation and has been successfully used as a turbulence closure model \cite{ChenFoiasHolmOlsonTiti98, FoiasHolmTiti01, MohseniKosovicShkollerMarsden03, ChenHolmMargolinZhang99 }.  

When posed on a compact Riemannian manifold of dimension $n$ without boundary, local-wellposedness in {$H^{s}$ for $s > \frac{n}{2} + 1$} was proven in \cite{Shkoller98} (see also \cite{Shkoller00, MarsdenRatiuShkoller00} for the case of domains with boundary). {The state of the global wellposedness question for Euler-$\alpha$ is analogous to that of the classical Euler equations;} in two dimensions, global wellposedness of smooth solutions is known \cite{busuioc99}, but in three dimensions, finite time singularity formation has not been ruled out. 
It was shown in \cite{HouLi06} that global existence may be phrased in terms of a Beale-Kato-Majda type continuation criterion which requires that
\begin{equation}
    \label{eq:BKM:blowup}
    \int_0^T \| \nabla \times (-\Delta)^{-1} (1 - \alpha^2 \Delta)u(t) \|_{\text{BMO}} dt < \infty \, .
\end{equation}
Note that by setting $\alpha =0$ we formally recover the Euler equations after redefining the pressure. In the case of Euclidean space, the convergence of smooth Euler-$\alpha$ solutions to Euler solutions was demonstrated in \cite{LinshizTiti10} (see also \cite{LFNLTitiZang15, BusuiocIftimieLFNL12, BusuiocIftimie17, BusuiocIftimieLFNL16 } for progress in the case of domains with boundary). 

\subsection{Euler-$\alpha$ as a {Hamiltonian System}}
Sufficiently smooth solutions to Euler-$\alpha$ satisfy the conservation law
\begin{equation}
    \label{eq:conservation}
    \frac{d}{dt} \mathcal{H}_{\alpha} := \frac{d}{dt} \left(\| u \|_{L^2}^2  + \alpha^2 \| \nabla u \|_{L^2}^2 \right) = 0 \, .
\end{equation}
Beyond merely being a conserved quantity, $\mathcal{H}_{\alpha}$ is actually the {\emph{Hamiltonian} associated to the action function from which Euler-$\alpha$ can be derived }. We follow \cite{BSV16} to expand upon this point. In the Euler-Poincar\'e framework, Euler-$\alpha$ arises as the geodesic equations associated with the metric 
\begin{equation}
    \label{eq:metric}
    (u, w)_{\alpha} := \int_{\mathbb{T}^n} (1 -\alpha^2 \Delta) u \cdot w \,  dx\, .
\end{equation}
Moreover, the geodesic equations can be computed  by identifying extrema of the action function
\begin{equation}
    \label{eq:action}
    s(u) = \int_{\T^n}(1 - \alpha^2 \Delta)u \cdot u \,  dx = \int_{\T^n} |u|^2 + \alpha^2 |\nabla u|^2 \, dx
\end{equation}
when considering variations that are Lie-advected and satisfy suitable boundary conditions. Since solutions are extrema of the action, they formally conserve the Hamiltonian 
\begin{equation}
    \label{eq:Hamiltonian}
    \mathcal{H}_{\alpha}(t) = \int_{\T^n}(1 - \alpha^2 \Delta)u \cdot u \,  dx = \int_{\T^n} |u|^2 + \alpha^2 |\nabla u|^2 \,  dx \, .
\end{equation}
This defines a functional framework in which one may study wellposedness for Euler-$\alpha$. A natural question then follows: \emph{Do solutions of \eqref{eq:euler:alpha1} which live in the function space determined by \eqref{eq:Hamiltonian} necessarily conserve the Hamiltonian?} We answer this question in the negative: 
\begin{theorem}
\label{thm:main:rough}
{Let $n\geq 2$}.  Then there exists $\beta>1$ such that there exist weak solutions of \eqref{eq:euler:alpha1} in $C^0_t H_x^{\beta}(\T^n)$ which do not conserve $\mathcal{H}_{\alpha}$. 
\end{theorem}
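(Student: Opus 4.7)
The plan is to prove Theorem \ref{thm:main:rough} via a Nash-style iterative convex integration scheme, working with the Lie-advected ``momentum'' variable $v = (1-\alpha^2 \Delta)u$, in which the system takes the form
\begin{align*}
\partial_t v + u \cdot \nabla v + v^j\nabla u^j + \nabla p &= 0, \\
\Div u &= 0, \qquad u = (1-\alpha^2\Delta)^{-1} v.
\end{align*}
I would introduce the Euler-$\alpha$-Reynolds system by adding $\Div \mathring R$ to the right-hand side of the momentum equation, and then construct a sequence $(u_q, v_q, \mathring{R}_q, p_q)$ of such subsolutions, each obtained from the previous by adding a highly oscillatory divergence-free perturbation $w_{q+1}$ at spatial frequency $\lambda_{q+1} = \lambda_q^b$. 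The inductive hypotheses would be arranged so that $\|\mathring R_q\|_{L^1}\to 0$ while $\sum_q \|u_{q+1}-u_q\|_{H^\beta}<\infty$ for some $\beta>1$; in the limit one obtains a weak solution $u \in C^0_t H^\beta_x$ of \eqref{eq:euler:alpha1}.

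The perturbations would be built from intermittent Beltrami or Mikado-type flows, designed to exploit the crucial smoothing afforded by the Helmholtz operator: if $w_{q+1}$ is supported at frequency $\lambda_{q+1}$, then the associated velocity correction $(1-\alpha^2\Delta)^{-1} w_{q+1}$ carries an extra $\lambda_{q+1}^{-2}$ factor, so that a perturbation with $\|w_{q+1}\|_{L^2}\sim 1$ contributes only $\sim \lambda_{q+1}^{\beta-2}$ to $\|u\|_{H^\beta}$. With appropriate intermittency parameters $(r_\parallel, r_\perp)$ balancing $L^p$ concentration against the Sobolev cost, this gain should be sufficient to push $\beta$ slightly above $1$, which is exactly what is needed since the Hamiltonian $\mathcal{H}_\alpha$ is equivalent to $\|u\|_{H^1}^2$.

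To close the iteration I would use the standard convex integration toolkit, adapted to this setting: an inverse-divergence operator tailored to the Euler-$\alpha$-Reynolds stress, transport estimates along the flow of a mollified $u_q$ to handle the advective and stretching terms $u\cdot\nabla v + v^j \nabla u^j$, and oscillatory integral / bilinear estimates to bound the high-frequency remainder so that it can be absorbed into $\mathring{R}_{q+1}$. Non-conservation of $\mathcal{H}_\alpha$ would then be arranged by prescribing any desired Hamiltonian profile during the iteration, for instance by initializing the scheme with a stress $\mathring R_0$ that interpolates smoothly between two fields with differing $H^1$ norms, or by running the iteration so that the perturbations inject a controlled amount of $H^1$ energy at prescribed times.

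The main obstacle, relative to established intermittent convex integration schemes for Euler and Navier-Stokes, is that each perturbation must serve two conflicting purposes simultaneously: on the $v$-side one wants $w_{q+1}$ to be as oscillatory and intermittent as possible so that $\Div\mathring R_q$ can be cancelled, while on the $u$-side the Helmholtz inverse provides only a fixed $\lambda_{q+1}^{-2}$ smoothing, which must in particular compensate the two-derivative loss appearing in the stretching term $v^j \nabla u^j$. Finely tuning $b$, the intermittency parameters, and the perturbation amplitudes so that the resulting solution lies strictly above the Hamiltonian threshold $H^1$ while the Reynolds error still converges to zero will be the technical heart of the argument.
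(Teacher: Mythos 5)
Your overall strategy is essentially the paper's: relax Euler-$\alpha$ to a Reynolds-type system with stress $\RR_q$ at the momentum level, run an intermittent Nash iteration that sends $\RR_q\to 0$ while $u_q$ converges in $H^{\beta}$ for some $\beta>1$, and obtain non-conservation by initializing the iteration with a stress $\RR_0$ that glues two smooth solutions with different Hamiltonians. Your Helmholtz heuristic (viewing the perturbation at the $v$-level, then recovering two powers of $\lambda_{q+1}$ when passing to $u$) is the right way to see why a threshold above $H^1$ is attainable; it is implicit in the paper's inductive bound $\|\RR_q\|_{L^1}\lesssim \delta_{q+1}\lambda_{q+1}^2$.

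That said, the proposal has genuine gaps that go beyond parameter tuning. First, the motivation for intermittency is misidentified: the real constraint is that one needs $\nabla u\in L^2$ (to make sense of the weak formulation) simultaneously with $\nabla u\notin L^3$ (so that $\mathcal{H}_\alpha$ may vary, by the Onsager-type rigidity lemma). It is this $L^2$/$L^3$ dichotomy, not an abstract ``balance against the Sobolev cost,'' that forces the building blocks to be spatially concentrated; and, crucially, Remark~\ref{rem:nash:explanation} in the paper shows that \emph{too much} intermittency damages the Nash error, so ``as oscillatory and intermittent as possible'' is actually the wrong target. Second, your building blocks are under-specified in a way that matters: generic Mikado tubes are \emph{not} stationary solutions of Euler-$\alpha$ — only those with a radial profile about the axis are (Proposition~\ref{pipeconstruction}) — and there is no off-the-shelf intermittent Beltrami family; without a stationary Euler-$\alpha$ building block, the self-interaction in the oscillation error would not have the required algebraic structure. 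Third, and most seriously, ``transport estimates along the flow of a mollified $u_q$'' means composing the Mikado tubes with Lagrangian flow maps. The central technical obstruction the paper must overcome — and which your proposal does not identify — is that deformed tubes associated with \emph{different} Lagrangian coordinate patches may intersect, producing Type~2 oscillation errors that do not vanish and cannot be treated by standard $L^p$ decoupling. The paper's resolution (Proposition~\ref{prop:weak:decoupling}: concentration of intersections near nearly-periodic points, with timescale tied to $\|\nabla u_\ell\|_\infty^{-1}$) is the main technical contribution of the argument, and none of the standard alternatives (gluing, short-time discretization, sparse-tube placement) is compatible with the constraints here, as explained in Remark~\ref{rem:why}. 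Without some substitute for this step the iteration does not close.
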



\subsection{Definition of Weak solution}
We first start by specifying the meaning of a weak solution of \eqref{eq:euler:alpha1}. Using incompressibility, the identity
$$  \partial_{kk} u^j \partial_l u^j = \partial_k \left( \partial_k u^j \partial_l u^j \right) - \partial_k u^j \partial_{kl} u^j = \partial_k \left( \partial_k u^j \partial_l u^j \right) - \frac{1}{2} \partial_l (\partial_k u^j \partial_k u^j) \, , $$
and redefining the pressure, we can formally rewrite \eqref{eq:euler:alpha1} as 
\begin{subequations}
    \label{eq:alpha:euler:transport1}
    \begin{align}
        \partial_t \optwo + \Div(u \otimes \optwo - \alpha^2 \nabla^Tu \nabla u ) + \nabla p &= 0\\
        \Div u &= 0 \, ,
    \end{align}
\end{subequations}
or
\begin{subequations}
\label{eq:alpha:euler:transport}
\begin{align}
     \partial_t \optwo^l  + \partial_k \left( u^k \left(u^l - \alpha^2 \Delta u^l\right) - \alpha^2 \partial_k u^j \partial_l u^j \right) + \partial_l p &= 0 \\
\partial_l u^l &= 0 \, 
\end{align}
\end{subequations}
when written in coordinates. Multiplying \eqref{eq:alpha:euler:transport1} by a smooth, divergence-free vector field $\varphi$, integrating in space and time, and integrating by parts leads to
\begin{equation}
\label{eq:weak:soln:euler:alpha}
    \int_{\mathbb{R}}\int_{\mathbb{T}^n} \partial_t \varphi \cdot u + \alpha^2 \nabla \partial_t \varphi : \nabla u  + \nabla \varphi : \left( u \otimes \optwo- \alpha^2\nabla u^T \nabla u \right) dx dt = 0 \, .
\end{equation}
This motivates the following definition:
\begin{definition}
\label{def:weak:soln}
A vector field $u \in C_t^0H_x^1$ is called a weak solution of \eqref{eq:euler:alpha1} if the vector field $u(\cdot,t)$ is weakly divergence free and has zero mean for any $t$ and satisfies \eqref{eq:weak:soln:euler:alpha} for any divergence free test function $\varphi$. 
\end{definition}
\begin{remark}
\label{rem:weak}
In order to interpret the term $\int_{\mathbb{R}}\int_{\mathbb{T}^n} \nabla  \varphi : u \otimes \Delta u \, dx \, dt$, we use duality. Specifically, since $u \in H_x^1$, $u \cdot \nabla \varphi\in H^1_x$ as well, and $\Delta u$ is well defined in $H_x^{-1}$. Using the observation that 
$$
( \Delta u_l,  u \cdot \nabla \varphi_l  )_{H^{-1},H^{1}} = - \langle \partial_i u_l, \partial_i (u \cdot \nabla \varphi_l) \rangle_{L^2,L^2}  = -\langle \partial_i u_l, \partial_i u_k \partial_k \varphi_l + u_k \partial_k \partial_i \varphi_l \rangle_{L^2,L^2} \, ,
$$
we have the following \emph{equivalent} weak formulation of \eqref{eq:weak:soln:euler:alpha}:
\begin{equation}
\label{eq:weak:soln:euler:alpha:div}
    \int_{\mathbb{R}}\int_{\mathbb{T}^n} \partial_t \varphi \cdot u + \alpha^2 \nabla \partial_t \varphi : \nabla u  + \nabla \varphi : \left( u \otimes u- \alpha^2\nabla   u^T \nabla u + \alpha^2 \nabla u \nabla u^T \right) + \alpha^2 u \cdot \nabla \nabla \varphi : \nabla u^T dx dt = 0 \, .
\end{equation}

\end{remark}
\begin{remark}
For later use, it will also be convenient to utilize the following formulation of the Euler-$\alpha$ equations, which in the smooth context a notion of solution equivalent to the previous formulations:
\begin{subequations}
    \label{eq:euler:alpha}
    \begin{align}
    \partial_t \optwo + \curl \optwo \times u + \nabla p &= 0   \\
    \Div u &= 0 \, .
    \end{align}
\end{subequations}

\end{remark}

\subsection{Rigidity for weak solutions of Euler-$\alpha$}
Inspired by Onsager's conjecture for the classical Euler equations \cite{Onsager49}, one naturally may ask at what level of regularity weak solutions as defined in Definition~\ref{def:weak:soln} conserve $\mathcal{H}_{\alpha}$. To this end, we have the following lemma (see Appendix \ref{app:energy:conservation} for the proof):
\begin{lemma}[Energy Conservation]
\label{lem:energy:conservation}
Let $u$ be a mean-zero in space weak solution of Euler-$\alpha$ as defined by definition \ref{def:weak:soln} on $\T^2$ or $\T^3$, and let $s>1$.  If ${u \in L^3\left([0,T); B_{3, \infty}^s\right) \cap C^0\left([0,T); H^1\right)}$, then the quantity 
$$
\mathcal{H}_{\alpha}(t) = \| u(t) \|_{L^2}^2 + \alpha^2 \| \nabla u \|_{L^2}^2 
$$
is constant in time for $t\in[0,T)$. 
\end{lemma}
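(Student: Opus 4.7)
The plan is a Constantin--E--Titi style mollification argument, where the main novelty compared to the classical Euler case is handling the $\alpha^{2}\Delta u$ terms, which are bilinear commutators involving a factor with negative Besov regularity.

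Let $\phi_{\epsilon}$ be a standard spatial mollifier and set $u_{\epsilon}=\phi_{\epsilon}*u$ and $v_{\epsilon}=u_{\epsilon}-\alpha^{2}\Delta u_{\epsilon}$. Testing \eqref{eq:weak:soln:euler:alpha} against translates of $\phi_{\epsilon}$ (componentwise, with the appropriate Leray projection to absorb the pressure) shows that $u_{\epsilon}$ satisfies, in the classical sense,
\begin{equation*}
\partial_{t}v_{\epsilon}+\Div\!\bigl(u_{\epsilon}\otimes v_{\epsilon}-\alpha^{2}\nabla u_{\epsilon}^{T}\nabla u_{\epsilon}\bigr)+\nabla p_{\epsilon}= \Div R_{\epsilon},
\end{equation*}
where the commutator stress is $R_{\epsilon}=u_{\epsilon}\otimes v_{\epsilon}-(u\otimes v)_{\epsilon}-\alpha^{2}\bigl[\nabla u_{\epsilon}^{T}\nabla u_{\epsilon}-(\nabla u^{T}\nabla u)_{\epsilon}\bigr]$. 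Taking the spatial $L^{2}$ inner product with $u_{\epsilon}$ and using $\Div u_{\epsilon}=0$, the time derivative term yields $\tfrac{1}{2}\tfrac{d}{dt}\mathcal H_{\alpha}(u_{\epsilon})$, the pressure drops, and the smooth-case computation (writing $u^{k}u^{l}\partial_{k}u^{l}=\tfrac{1}{2}u^{k}\partial_{k}|u|^{2}$, and noting that $u^{k}\Delta u^{l}\partial_{k}u^{l}$ cancels $\partial_{k}u^{j}\partial_{l}u^{j}\partial_{k}u^{l}$ after one integration by parts) shows the diagonal flux contributes zero. Hence
\begin{equation*}
\tfrac{1}{2}\tfrac{d}{dt}\mathcal H_{\alpha}(u_{\epsilon})=-\int_{\T^{n}} R_{\epsilon}:\nabla u_{\epsilon}\,dx.
\end{equation*}

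The task is then to show this right-hand side tends to zero in $L^{1}_{t}$. Let $T_{\epsilon}(f,g):=(fg)_{\epsilon}-f_{\epsilon}g_{\epsilon}$ denote the bilinear commutator. The classical $T_{\epsilon}(u,u)$ piece, and the piece $T_{\epsilon}(\nabla u,\nabla u)$, are controlled by the standard Besov commutator bound $\|T_{\epsilon}(f,g)\|_{L^{3/2}}\lesssim \epsilon^{a+b}\|f\|_{B^{a}_{3,\infty}}\|g\|_{B^{b}_{3,\infty}}$ together with $\|\nabla u_{\epsilon}\|_{L^{3}}\lesssim \|u\|_{B^{s}_{3,\infty}}$, yielding contributions of order $\epsilon^{2s}$ and $\epsilon^{2s-2}$ respectively against $\nabla u_{\epsilon}$. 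The delicate piece is $T_{\epsilon}(u,\Delta u)$, since $\Delta u$ is only in $B^{s-2}_{3,\infty}$. Here I would integrate by parts once before applying the commutator estimate, using the identity
\begin{equation*}
T_{\epsilon}(u^{k},\Delta u^{l})=\partial_{j}T_{\epsilon}(u^{k},\partial_{j}u^{l})-T_{\epsilon}(\partial_{j}u^{k},\partial_{j}u^{l}),
\end{equation*}
so that after pairing with $\partial_{k}u_{\epsilon}^{l}$ and integrating by parts in $\partial_{j}$, the remaining commutators involve only factors of regularity $s$ or $s-1$. The bound $\|\partial^{m}u_{\epsilon}\|_{L^{3}}\lesssim \epsilon^{s-m}\|u\|_{B^{s}_{3,\infty}}$ for $m\geq 1$ then yields contributions of order $\epsilon^{3s-3}$ times $\|u\|_{B^{s}_{3,\infty}}^{3}$, all of which vanish as $\epsilon\to 0$ precisely when $s>1$.

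With the commutator bounds in hand, integrating in time from $0$ to $t$ and using $u\in L^{3}_{t}B^{s}_{3,\infty,x}$ to make the cubic Besov norm integrable, we conclude $\mathcal H_{\alpha}(u_{\epsilon}(t))-\mathcal H_{\alpha}(u_{\epsilon}(0))\to 0$ as $\epsilon\to 0$. Since $u\in C^{0}_{t}H^{1}_{x}$, we have $u_{\epsilon}(t)\to u(t)$ in $H^{1}$ for every $t$, so $\mathcal H_{\alpha}(u_{\epsilon}(t))\to \mathcal H_{\alpha}(t)$, giving conservation. The main obstacle is the $T_{\epsilon}(u,\Delta u)$ commutator: a naive application of the Besov commutator bound fails because $\Delta u$ lives in a negative regularity space, and the integration by parts above is the key trick that transfers one derivative from $\Delta u$ onto the test function $\nabla u_{\epsilon}$, at the cost of one factor of $\epsilon^{-1}$ that is compensated exactly by the threshold $s>1$.
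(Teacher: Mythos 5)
Your proof takes essentially the same route as the paper: mollify the equation, apply the Constantin--E--Titi double-commutator identity, and, crucially, integrate by parts once so that the problematic $T_\epsilon(u,\Delta u)$ commutator is replaced by commutators of first-order derivatives only — the paper implements this same move by passing to the equivalent weak formulation \eqref{eq:weak:soln:euler:alpha:div} of Remark~\ref{rem:weak} before mollifying. The only quibble is the exponent you quote at the end: with a generic mollifier and $s>1$ the $u$-factor saturates at one power of $\epsilon$, so the worst contribution is $\epsilon^{2s-2}\|u\|^3_{B^s_{3,\infty}}$ rather than $\epsilon^{3s-3}$ (matching the paper's computation), but the threshold $s>1$ is unchanged.
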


In light of Lemma \ref{lem:energy:conservation} we make the following conjecture:
\begin{conjecture}
   Weak solutions of \eqref{eq:euler:alpha1} obey the following dichotomy.
   \begin{enumerate}
       \item For $s > 1$, weak solutions in $C_t^0H_x^1 \cap {L_t^{3}}B_{3, \infty}^s$ conserve $\mathcal{H}_{\alpha}$.
       \item For $s < 1$, there exist weak solutions in $C_t^0H_x^1 \cap {L_t^{3}}B_{3, \infty}^s$ which do not conserve $\mathcal{H}_{\alpha}$.
   \end{enumerate}
\end{conjecture}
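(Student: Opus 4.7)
Part (1) of the conjecture is exactly Lemma~\ref{lem:energy:conservation}, so the real content is part (2). My plan for part (2) is to sharpen the intermittent convex integration scheme underlying Theorem~\ref{thm:main:rough} by replacing its building blocks with more strongly intermittent ones and tuning the frequency and concentration parameters so that the resulting solutions lie in $L^3_t B^s_{3,\infty,x}$ for any prescribed $s<1$, while still belonging to $C^0_t H^1_x$ and failing to conserve $\mathcal{H}_\alpha$.

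The iteration would run in the Euler-$\alpha$-Reynolds formulation. At stage $q$ one has a smooth velocity $u_q$ and a symmetric traceless stress $\mathring{R}_q$ solving
\begin{equation*}
\partial_t (u_q - \alpha^2 \Delta u_q) + \mathrm{div}\bigl( u_q \otimes (u_q - \alpha^2 \Delta u_q) - \alpha^2 \nabla u_q^T \nabla u_q \bigr) + \nabla p_q = \mathrm{div}\, \mathring{R}_q,
\end{equation*}
with $\mathring{R}_q$ essentially supported below a frequency $\lambda_q$ and of size $\delta_{q+1}$ in a suitable spatial norm. One then adds a perturbation $w_{q+1}$ oscillating at frequency $\lambda_{q+1}=\lambda_q^b$ and concentrated on sets of relative measure $\mu_{q+1}^{-(n-1)}$, built from intermittent pipe or jet flows that are themselves stationary approximate solutions of an Euler-$\alpha$-Reynolds problem with the correct stress profile. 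Slowly-varying amplitude coefficients are chosen so that the low-frequency part of the quadratic interaction between $w_{q+1}$ and its momentum $(1 - \alpha^2 \Delta) w_{q+1}$ cancels $\mathring{R}_q$, and all remaining errors -- transport, Nash, oscillation, linear -- are absorbed into the next stress $\mathring{R}_{q+1}$.

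The Onsager threshold is fixed by the following bookkeeping. One has $\|w_{q+1}\|_{L^3_t B^s_{3,\infty,x}} \lesssim \lambda_{q+1}^s \|w_{q+1}\|_{L^3_{t,x}}$, and the intermittent $L^p$-gain together with the $\lambda_{q+1}^{-2}$ cost of inverting $M := 1-\alpha^2\Delta$ to recover $u_{q+1}$ from its momentum yields a perturbation with bounded $H^1$ norm but small $L^3$ norm. Balancing these against the requirement that the new Reynolds stress contract to $\delta_{q+2}$ forces an inequality of the form $s < 1 - \eta(b,\mu)$, where $\eta$ can be made arbitrarily small by sending $b\to 1^+$ and choosing the intermittency optimally. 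This reproduces the Onsager exponent $1$ of Lemma~\ref{lem:energy:conservation}, the unit shift from the classical Euler threshold $\sfrac{1}{3}$ arising because the natural $L^2$ scale for Euler-$\alpha$ is $H^1$ rather than $L^2$.

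The main obstacle, specific to Euler-$\alpha$, is the interaction of the operator $M$ with the stretching nonlinearity $\alpha^2 \nabla u^T \nabla u$. Three items require genuinely new work relative to the Euler convex integration literature. First, the building blocks must simultaneously solve a stationary Euler-$\alpha$-Reynolds problem rather than a stationary Euler-Reynolds problem, so that the direct oscillation error remains controllable after being convolved against $M$. Second, an inverse-divergence operator adapted to the momentum variable $Mu$ is needed so that every error current generated by $Mw_{q+1}$ can be written as the divergence of a controlled symmetric stress. Third, the quadratic gradient term $\nabla u^T \nabla u$ loses an additional derivative compared to $u \otimes u$, forcing sharper mixed-derivative estimates on the building blocks; this loss is ultimately what obstructs reaching the endpoint $s=1$. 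Once these technical ingredients are in place, the remainder of the scheme -- limit $u = \lim_q u_q$, prescribing a non-constant target energy profile, and verifying membership in $C^0_t H^1_x \cap L^3_t B^s_{3,\infty,x}$ -- follows the template already set up for the proof of Theorem~\ref{thm:main:rough}.
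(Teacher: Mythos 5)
You have correctly identified that part (1) is exactly Lemma~\ref{lem:energy:conservation}, whose proof appears in Appendix~\ref{app:energy:conservation}. But it is important to be clear about the status of the rest of the statement: it is labeled a \emph{conjecture} precisely because the paper does \emph{not} prove part (2). The paper's actual flexibility result, Theorem~\ref{thm:main:rough}, only constructs non-conservative solutions in $C^0_t H^\beta_x$ for \emph{some} $\beta>1$; it does not reach the full range $s<1$ in $L^3_t B^s_{3,\infty}$, and the authors explicitly leave that as open. What you have written for part (2) is therefore not a proof but a sketch of a research program, and you say as much (``once these technical ingredients are in place\dots'') without supplying those ingredients.

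Beyond the missing details, the strategic direction you propose is in tension with the paper's own analysis. You suggest ``replacing the building blocks with more strongly intermittent ones.'' But Remark~\ref{rem:nash:explanation} makes the opposite point: when one computes the Nash error for Euler-$\alpha$, any amount of intermittency \emph{weakens} the estimate relative to the homogeneous ($L^\infty$) bound, and heavy intermittency pushes one far from both $C^1$ regularity and the sharp $L^3$ threshold. The paper's stance is that \emph{some} intermittency is unavoidable (one needs $\nabla u\in L^2$ but $\nabla u\notin L^3$), yet to approach the conjectured endpoint one must use \emph{as little as possible} and lean instead on Lagrangian flow maps and the near-orthogonal intersection estimate of Proposition~\ref{prop:weak:decoupling}; see also Remark~\ref{rem:why}, where the authors explain why the more strongly intermittent placement strategies fail to reach the threshold. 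So the key quantitative bottleneck you flag --- balancing the $H^1$ constraint against the $L^3$ smallness and the Nash error --- is genuine, but ``more intermittency'' goes in the wrong direction, and you have not explained how to overcome the exponent loss identified in Remark~\ref{rem:nash:explanation}. As written, part (2) of your argument has a gap that coincides with the reason the statement remains a conjecture.
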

   \begin{remark}
   Stated in terms of $L^2$-based Sobolev spaces, $H^{\sfrac{3}{2}}$ embeds into the critical regularity class $B_{3, \infty}^{1}$ in three dimensions.
   \end{remark}

\subsection{Results and Main Ideas}
We will conclude Theorem \ref{thm:main:rough} from the following:
\begin{theorem}
\label{thm:middle:third}
Let $u^{(1)}$ and $u^{(2)}$ be smooth solutions of \eqref{eq:euler:alpha1} defined on $[0,T]$ with zero mean. Moreover, suppose that $\mathcal{H}_{\alpha}(u^1) \neq \mathcal{H}_{\alpha} (u^2)$. There exists a $\beta' > 1$ and a weak solution $u \in {C^0}\left([0,T]; H_x^{\beta' }\right)$ of \eqref{eq:euler:alpha1} such that 
\begin{equation}
\label{eq:middle:third}
u \equiv u^{(1)} \, \text { on }  \left[0, \frac{T}{3} \right] \quad \text{and} \quad u \equiv u^{(2)} \, \text{ on } \left[\frac{2T}{3}, T\right] \, .
\end{equation}
\end{theorem}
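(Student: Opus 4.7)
The plan is to realize $u$ as the limit of a sequence $(u_q, \mathring{R}_q)_{q\ge 0}$ of smooth pairs solving an Euler-$\alpha$-Reynolds system
\begin{equation*}
\partial_t(u_q-\alpha^2\Delta u_q)+\Div\!\bigl(u_q\otimes(u_q-\alpha^2\Delta u_q)-\alpha^2\nabla u_q^T\nabla u_q\bigr)+\nabla p_q=\Div\mathring{R}_q,\qquad \Div u_q=0,
\end{equation*}
with $\mathring{R}_q$ symmetric, trace-free, and supported in a shrinking interior neighborhood of $(T/3,2T/3)$. To initialize, I would pick a smooth temporal cutoff $\chi$ equal to $1$ on $[0,T/3]$ and $0$ on $[2T/3,T]$, set $u_0=\chi u^{(1)}+(1-\chi)u^{(2)}$ (adjusted by a mean-zero divergence-free corrector if necessary), and let $\mathring{R}_0$ be produced by inverting the divergence against the resulting error, which is smooth and supported in $(T/3,2T/3)$. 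Since $\mathcal{H}_\alpha(u^{(1)})\neq \mathcal{H}_\alpha(u^{(2)})$, $\mathring{R}_0$ is genuinely non-trivial.

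The inductive step is a Nash-style intermittent convex integration iteration. Given $(u_q,\mathring{R}_q)$ I would set $u_{q+1}=u_q+w_{q+1}$, where $w_{q+1}$ is an intermittent perturbation built from high-frequency building blocks (intermittent pipe/Mikado flows) at spatial frequency $\lambda_{q+1}$, concentrated at scale $r_{q+1}^{-1}\ll \lambda_{q+1}$, multiplied by smooth amplitude coefficients depending on $\mathring{R}_q$. The amplitudes are chosen so that the low-frequency part of the new quadratic interaction cancels $-\mathring{R}_q$; the remaining high-frequency contributions, the linear transport errors, and the non-stationary corrector errors are then absorbed into $\mathring{R}_{q+1}$ by applying a carefully chosen inverse-divergence operator. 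The perturbation $w_{q+1}$ is localized temporally inside the support of $\mathring{R}_q$, which inductively preserves condition \eqref{eq:middle:third}.

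The main new obstacle, and the reason the scheme is more delicate than for classical Euler, is the operator $(1-\alpha^2\Delta)$ in the momentum. When $w_{q+1}$ is inserted into $u\otimes(u-\alpha^2\Delta u)$, the leading quadratic correction is $-\alpha^2\,w_{q+1}\otimes\Delta w_{q+1}$, which is of size $\alpha^2\lambda_{q+1}^2\|w_{q+1}\|_{L^2}^2$ rather than $\|w_{q+1}\|_{L^2}^2$. This is both a curse and a blessing: to cancel $\mathring{R}_q$ one only needs the tiny amplitude $\|w_{q+1}\|_{L^2}^2\sim \alpha^{-2}\lambda_{q+1}^{-2}\|\mathring{R}_q\|_{L^1}$, producing a two-derivative gain over the classical Euler case. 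I would encode this gain by defining the new stress through a modified inverse-divergence operator adapted to $(1-\alpha^2\Delta)^{-1}\divR$, so that each application recovers the two derivatives lost to $\Delta u_q$. The correction term $\alpha^2\nabla u^T\nabla u$ contributes only milder errors since $\nabla u_q$ is already controlled inductively and $\nabla w_{q+1}$ appears quadratically.

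With these choices, picking parameters doubly exponentially $\lambda_{q+1}=a^{b^{q+1}}$ and tuning the intermittency $r_{q+1}=\lambda_{q+1}^{-\gamma}$ with $\gamma\in(0,1)$, standard $L^p$ estimates for the intermittent profiles together with the $\alpha^2\lambda^{-2}$ elliptic gain yield bounds of the form $\|w_{q+1}\|_{H^{\beta'}}\lesssim \lambda_{q+1}^{-\eta}$ for some $\eta>0$ and some $\beta'>1$. Summability then gives $u_q\to u$ in $C^0_t H^{\beta'}_x$, while $\|\mathring{R}_q\|_{L^1_{t,x}}\to 0$ implies that $u$ is a weak solution in the sense of Definition~\ref{def:weak:soln}. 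The temporal localization of each $w_{q+1}$ preserves $u\equiv u^{(1)}$ on $[0,T/3]$ and $u\equiv u^{(2)}$ on $[2T/3,T]$, completing the proof.
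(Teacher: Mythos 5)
Your overall strategy --- glue $u^{(1)}$ and $u^{(2)}$ with a temporal cutoff, define $\mathring{R}_0$ by inverting divergence against the residual, and run an intermittent convex integration scheme with temporally localized stresses --- matches the paper's. There are, however, concrete gaps. Your cutoff $\chi$ equals $1$ on $[0,T/3]$ and $0$ on $[2T/3,T]$, which makes $\supp_t\mathring{R}_0$ fill $[T/3,2T/3]$ with no slack. This fails because the iterative step necessarily \emph{expands} the temporal support: the time cutoffs used to build $w_{q+1}$ overlap slightly with the complement of $\supp_t\mathring{R}_q$, so $\supp_t(u_{q+1}-u_q)$ grows by roughly $\lambda_q^{-1}$ on each side (cf.~\eqref{eq:supp:uqplus}). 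With your cutoff the limit $u$ would disagree with $u^{(1)}$ on a set extending past $T/3$. The paper chooses the cutoff equal to $1$ on $[0,2T/5]$ and $0$ on $[3T/5,T]$, so the accumulated expansion $\sim\sum_q\lambda_q^{-1}$ still fits inside $(T/3,2T/3)$. You also do not address how to force $(u_0,\mathring{R}_0)$ to satisfy the inductive bounds of the main proposition; the paper exploits the scaling invariance $u\mapsto\zeta u(\cdot,\zeta t)$ to shrink the data until those bounds hold.

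Your sketch of the inductive step also omits the paper's two central technical ideas and contains one misconception. First, the Mikado flows must be composed with Lagrangian flow maps $\Phi_{q,i}$ of the mollified velocity to respect the transport structure, and this forces one to control the intersection of \emph{deformed} Mikado tubes originating from different Lagrangian coordinate systems; the paper handles this with a dedicated decoupling estimate for tubes oriented in nearly orthogonal directions (Proposition~\ref{prop:weak:decoupling}). Second, intermittency here is not a free tuning knob: one simultaneously needs $\nabla u\in L^2$ (so the weak formulation makes sense) and $\nabla u\notin L^3$ (so $\mathcal{H}_\alpha$ can fail to be conserved), and the Nash error is the sharp constraint that forces both the Lagrangian composition and a strictly limited degree of intermittency (Remark~\ref{rem:nash:explanation}). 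Finally, a ``modified inverse-divergence operator that absorbs $(1-\alpha^2\Delta)^{-1}$'' is not how the gain is realized and would not be consistent with the relaxed system: the residual must equal $\Div\mathring{R}_{q+1}$ in the raw momentum balance \eqref{eq:inductive:equation}, and there is no place to insert $(1-\alpha^2\Delta)^{-1}$. The actual mechanism is that the inductive stress bound \eqref{eq:inductive:stress} carries the factor $\alpha^2\delta_{q+1}\lambda_{q+1}^2$ rather than $\delta_{q+1}$, because the self-interaction $-\alpha^2 w_{q+1}\otimes\Delta w_{q+1}$ is already of that size from a perturbation of $L^2$ amplitude $\delta_{q+1}^{1/2}$.
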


The method of proof for Theorem \ref{thm:middle:third} is based on the iterative technique known as convex integration. In the context of fluid equations, convex integration schemes generally use a specially constructed stationary solution to the equations as the basic building block of the iteration.  In our iteration, we use \emph{intermittent Mikado flows}. Mikado flows were introduced in \cite{DaneriSzekelyhidi17} and featured crucially in the resolution of the Onsager conjecture for nonconservative solutions \cite{Isett2018}; see also \cite{BDLSV17} for an extension to dissipative solutions. The introduction of \emph{intermittency} to convex integration schemes dates back to the work \cite{BV19}, which gave the first example of non-uniqueness of weak solutions for the Navier-Stokes equations. Intermittent Mikado flows were later introduced in \cite{ModenaSZ17}. One may check that ``generic" Mikado tubes are not necessarily stationary solutions of \eqref{eq:euler:alpha1}, but Mikado flows with a radial flow profile about the axis (as defined in \eqref{def:radial:Mikado}) are in fact stationary solutions; see Proposition~\ref{pipeconstruction}.  For further applications of and background on convex integration schemes, we refer to the papers \cite{cheskidov2020nonuniqueness, cheskidov2020sharp, IssetVicol15, novack2020, BCV18, brue2020positive, Dai18nonunique, BV_EMS19, DLSZ19}. 

We emphasize that the need for intermittency arises from the combination of the rigidity threshhold for $\mathcal{H}_{\alpha}$, and the notion of weak solution we employ; we simultaneously need $\nabla u \in L^2$ to define the weak formulation, but also $\nabla u \notin L^3$, so that $\mathcal{H}_{\alpha}$ may vary in time. Thus our iteration should \emph{not} produce spatially homogeneous objects.  We instead need solutions for which the regularity threshold measured in $L^p$ depends fundamentally on $p$.  This dependence on $p$ of the regularity of both the building blocks and the final solution is the main attribute of intermittent flows \cite{BV19}. 

A key aspect of our construction is the composition of the high-frequency Mikado flows with Lagrangian flow maps. This composition respects the transport structure of \eqref{eq:euler:alpha1} and has been employed in earlier convex integration schemes \cite{BDLISZ15, Isett12}. The \emph{combination} of intermittency with Lagrangian flow maps presents particular challenges and was exploited for the first time in the recent work \cite{bmnv21}.  It may in fact be possible to close a convex integration scheme for \eqref{eq:euler:alpha1} \emph{without} composing with Lagrangian flow maps, perhaps using different building blocks such as intermittent jets \cite{BCV18}. However, this would require the building blocks to be very intermittent.  Utilization of such intermittent building blocks appears to impede the scheme from reaching the $L^3$ rigidity threshold. {We refer to Remark~\ref{rem:nash:explanation}, in which we make a simple computation with the \emph{Nash error} term, to justify this assertion.} Since we seek to build techniques which are as robust as possible in the hopes of eventually proving an Onsager-type conjecture for Euler-$\alpha$, we therefore use Lagrangian flow maps.

On the other hand, one difficulty created by the composition with Lagrangian flow maps is the non-trivial interaction of building blocks arising from different Lagrangian coordinate systems. We circumvent this difficulty by expanding on an insight from \cite{BBV19}, in which the supports of separate building blocks are concentrated around planes which intersect \emph{orthogonally}, in the sense that the intersection is a periodized line, and thus a lower-dimensional set.  Since the intersection of the building blocks is concentrated around a lower dimensional set, one can use intermittency to make the error terms manageable. The new difficulty not present in \cite{BBV19} is that the supports of our building blocks deform in time due to the composition with Lagrangian flow maps. On a timescale which is commensurate with that determined by $\left\| \nabla_x u \right\|_{L^\infty_{t,x}}^{-1}$, we show in Proposition~\ref{prop:weak:decoupling} that the intersection of deformed intermittent Mikado tubes oriented around non-parallel axes is concentrated around a collection of nearly-periodic \emph{points}.  Then since the intersection is concentrated around a lower-dimensional set, intermittency may be used to make these errors small.  We remark that this technique allows us to treat the intersection of non-parallel Mikado tubes in both two and three dimensions. In fact this is the only part of the scheme which requires an adjustment in two dimensions; see Remark~\ref{rem:2d:one} and Remark~\ref{rem:2d:two} for details. 

There are other methodologies one could employ for preventing the intersection of Mikado tubes arising from different Lagrangian coordinate systems.  One could try to appeal to the gluing technique \cite{Isett2018}, a placement strategy predicated on a restriction of the timescale \cite{dlk20}, or a placement strategy predicated on the degree of freedom offered by the sparseness of intermittent Mikado tubes \cite{bmnv21}.  However, these methods either do not appear to work for the Euler-$\alpha$ equations, or may work but would introduce error terms which again appear to impede the scheme from reaching the sharp $L^3$ threshold. Furthermore, none of the aforementioned methods could treat the 2D Euler-$\alpha$ equations, in which the usage of Mikado flows conflicts with the fact that non-parallel lines must intersect in two dimensions. Therefore we use Proposition~\ref{prop:weak:decoupling}, which functions perfectly well in our two- and three-dimensional intermittent context.  For additional commentary on this issue, we refer to Remark~\ref{rem:why}.






\section{Convex Integration Scheme}
Our aim is to construct a sequence of solutions $(u_q, \RR_q )$ to the relaxed system of equations\footnote{The pressure $p_q$ is determined via the incompressibility of $u_q$.}
\begin{subequations}
    \label{eq:inductive:equation}
    \begin{align}
    \partial_t (u_q - \alpha^2 \Delta u_q)^l + \partial_k\left(u_q^k(u_q^l -\alpha^2\Delta u_q^l) - \alpha^2 \partial_k u_q^j \partial_l u_q^j \right) + \partial_l p &= \partial_k \RR_q^{kl}  \\
    \partial_l u_q^l &= 0 \, ,
    \end{align}
\end{subequations}
where $\RR_q$ is a symmetric traceless matrix, and show that $\RR_q \to 0$. In order to quantify the convergence of $\RR_q$ and $u_q$, we use a frequency parameter $\lambda_q$ and an amplitude parameter $\delta_q$ defined as follows:
\begin{equation}
    \label{def:freq:amp}
    \lambda_q = a^{b^q} \qquad \delta_q = \lambda_q^{-2\beta} \, ,
\end{equation}
where $a, b \in \mathbb{N}$ are large and $\beta > 1$. We will assume the following inductive bounds on the sequence 
\begin{subequations}
    \label{eq:inductive}
    \begin{align}
    \label{eq:inductive:stress}
    \left\| \RR_q \right\|_{L^1} & \leq {{\alpha} ^2} \cdot {\calc_\RR} \cdot \delta_{q+1}\lambda_{q+1}^2  \\
    \left\| \nabla u_q \right\|_{L^2} &\leq 1 \\
  \left\|   \nabla^{3}_{t,x} u_q \right\|_{0} &\leq {\lambda_q^{4}} \\
    r_q &= {\left( \frac{\lambda_q}{\lambda_{q+1}} \right)^{\Gamma}}\, .
    \end{align}
\end{subequations}
where $\calc_\RR$ is a dimensional constant and $\Gamma \in (0,1)$.  All norms in the above assumptions are Lebesgue norms in space, measured uniformly in time; furthermore, we use $\left\| \cdot \right\|_{k}$ to denote the spatial $C^k$ norm of functions, measured uniformly in time.  All parameters used above, as well as all other parameters used throughout the paper, are defined in subsection~\ref{ss.parameters}.

Theorem \ref{thm:middle:third} is a consequence of the following proposition:
\begin{proposition}
\label{prop:main:iterative:proposition}
There exists $\beta > 1$, $b\in \mathbb{N}$, and $a_0\in\mathbb{N}$ such that for any natural number $a \geq a_0$, the following holds.  Suppose that $(u_q, \RR_q)$ are given and satisfy \eqref{eq:inductive:equation} and \eqref{eq:inductive}.  Then there exists $(u_{q+1}, \RR_{q+1} )$ which satisfy \eqref{eq:inductive:equation} and \eqref{eq:inductive} with $q$ replaced by $q+1$. Furthermore, $u_{q+1}-u_q$ satisfies the bounds
\begin{equation}
    \label{eq:iterate:size}
     \left\| u_{q+1} - u_q \right\|_{L^2} + \lambda_{q+1}^{-1} \left\| \nabla\left( u_{q+1} - u_q \right) \right\|_{L^2} + \lambda_{q+1}^{-2} \left\| \nabla^2 \left( u_{q+1} - u_q \right)\right\|_{L^2} \leq \delta_{q+1}^{\sfrac{1}{2}} \, .
\end{equation}
Finally, if the temporal support of $\RR_q$ satisfies
\begin{equation}\label{eq:supp:Rq}
    \supp_t \RR_q := \{ t: |\RR_q(x,t)| \not\equiv 0 \} \subseteq [t_1,t_2] \, ,
\end{equation}
then 
\begin{equation}\label{eq:supp:uqplus}
   \supp_t \RR_{q+1} \cup \supp_t( u_{q+1} - u_q)  \subset \left(t_1-\frac{1}{30\lambda_q},t_2+\frac{1}{30\lambda_q}\right) \, . 
\end{equation}

\end{proposition}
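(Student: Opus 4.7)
\textbf{Proof plan for Proposition~\ref{prop:main:iterative:proposition}.}  The strategy is the standard one for an intermittent convex integration scheme, adapted to the Hamiltonian structure and to the composition with Lagrangian flow maps.  First I would mollify $u_q$ and $\RR_q$ in space and time at a scale $\ell_q$ chosen so that $\ell_q \lambda_q \ll 1$ but $\ell_q \lambda_{q+1} \gg 1$; this produces smooth $\tilde u_q$, $\tilde \RR_q$ for which mollification errors are controlled by a small power of $\lambda_q/\lambda_{q+1}$, and which have well-defined Lagrangian flow maps.  Next I would introduce a temporal partition of unity $\{\chi_i(t)\}$ at scale $\lambda_q^{-1}\| \nabla \tilde u_q\|_0^{-1}$ (essentially the CFL time of $\tilde u_q$), and on each interval $I_i$ solve $\partial_t \Phi_i + \tilde u_q\cdot\nabla \Phi_i = 0$ with $\Phi_i = x$ at the center of $I_i$.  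These maps keep $\nabla \Phi_i$ and $\nabla \Phi_i^{-1}$ close to the identity in $C^0$, which is essential for making building-block estimates as sharp as if the flow were straight.

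The principal perturbation takes the form
\begin{equation*}
w_{q+1}^{(p)}(x,t) \;=\; \sum_{i}\sum_{\xi}\chi_i(t)\, a_{\xi,i}(x,t)\,\nabla\Phi_i(x,t)^{-1}\, W_{\xi,i}\bigl(\Phi_i(x,t)\bigr),
\end{equation*}
where each $W_{\xi,i}$ is a radial intermittent Mikado tube (Proposition~\ref{pipeconstruction}) of frequency $\lambda_{q+1}$, concentration parameter $r_{q+1}$, and axis direction $\xi$, and the axes are chosen so that, within each $I_i$, different $\xi$'s are mutually orthogonal and moreover the axes used on distinct $I_i$'s point along directions whose images under the $\Phi_i$'s intersect transversally.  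The amplitudes $a_{\xi,i}$ are obtained from the standard geometric lemma: choose them so that $\sum_\xi a_{\xi,i}^2\, \xi\otimes\xi$, when transported by $\nabla\Phi_i$, cancels the low-frequency part of $\alpha^2 \tilde\RR_q$ plus an $\Id$-shift absorbed into the pressure.  A divergence corrector $w_{q+1}^{(c)}$ is added so that $w_{q+1} = w_{q+1}^{(p)} + w_{q+1}^{(c)}$ is divergence free; the key point is that since $W_{\xi,i}$ is radial the correction is supported in the same pipes and carries an extra factor of $\lambda_{q+1}^{-1}$ relative to $w_{q+1}^{(p)}$.  Setting $u_{q+1} := u_q + w_{q+1}$, the $L^2$ and $H^2$ estimates \eqref{eq:iterate:size} follow from the inductive bound on $\RR_q$, the normalization $\|W_{\xi,i}\|_{L^2} = 1$, and the Lipschitz bound on $\Phi_i$; the $L^\infty$ bounds on derivatives of $w_{q+1}$ up to order $3$ required for the next step of the induction follow from the concentration parameter $r_{q+1}$.

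For the new stress, I would form the residual equation \eqref{eq:inductive:equation} at level $q+1$ and decompose the resulting divergence into (i) a \emph{linear/transport error} from $(\partial_t + \tilde u_q\cdot\nabla)$ acting on the coefficients of $w_{q+1}$, which is small because the Lagrangian form makes the material derivative fall on the slow amplitudes $a_{\xi,i}$ rather than on the Mikado profiles; (ii) an \emph{oscillation error}, which is handled by noting that $\sum_\xi a_{\xi,i}^2 \nabla\Phi_i^{-1}(W_\xi\otimes W_\xi)\nabla\Phi_i^{-T}$, after subtracting its mean, has zero frequency-$\leq \lambda_{q+1}/2$ part, so one gains a factor $\lambda_{q+1}^{-1}$ from inverse divergence $\divR$; (iii) a \emph{Nash error} $\Div(w_{q+1}\otimes (u_q - \tilde u_q))$ plus symmetric terms, which is controlled by the mollification gain; (iv) the new contributions specific to Euler-$\alpha$ coming from $\alpha^2\Delta w_{q+1}$ and from the $\alpha^2\nabla u^T\nabla u$ quadratic in $\nabla u$, each of which requires applying two inverse-divergences or integrating by parts once to avoid paying the extra derivative loss; and (v) \emph{cross-interval} and \emph{cross-axis} errors produced when pipes from different $\Phi_i$'s or different $\xi$'s intersect.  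Each piece is put in symmetric traceless form by the inverse-divergence operators $\divH,\divR^*$ and estimated in $L^1$, and the bound $\|\RR_{q+1}\|_{L^1}\leq \alpha^2\calc_\RR\delta_{q+2}\lambda_{q+2}^2$ follows by choosing $\beta>1$ sufficiently close to $1$ and $b$ sufficiently large that every gain in $\lambda_q/\lambda_{q+1}$ beats the losses from $r_{q+1}^{-1}$ and from the derivatives falling on $\Phi_i$.

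The \textbf{main obstacle} is the cross-axis interaction error in (v): tubes along the image under $\Phi_i$ of one direction and tubes along the image of another direction are no longer parallel straight cylinders, so one cannot simply declare them disjoint.  Here I would invoke Proposition~\ref{prop:weak:decoupling}: on the CFL-timescale chosen above, the intersection of two transported intermittent Mikado tubes with non-parallel axes concentrates around a finite union of nearly-periodic points, a set of codimension $n$ rather than $2$.  Intermittency then buys an extra factor of $r_{q+1}^{(n-2)/2}$ in $L^1$ relative to the naive estimate, which suffices to close the scheme in both $n=2$ and $n=3$.  The temporal-support claim \eqref{eq:supp:uqplus} is a bookkeeping statement: the partition-of-unity cutoffs $\chi_i$ can be taken to vanish outside $(t_1 - 1/(30\lambda_q), t_2 + 1/(30\lambda_q))$, and since the amplitudes $a_{\xi,i}$ are proportional to a power of $\tilde\RR_q$ they vanish whenever $\tilde\RR_q$ does, so both $w_{q+1}$ and every error source feeding into $\RR_{q+1}$ inherit the desired time-support.
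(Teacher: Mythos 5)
Your overall architecture — mollify at a scale intermediate between $\lambda_q^{-1}$ and $\lambda_{q+1}^{-1}$, cut time into intervals of length comparable to the inverse Lipschitz time, compose radial intermittent Mikado tubes with Lagrangian flow maps $\Phi_i$, add a divergence corrector, invoke Proposition~\ref{prop:weak:decoupling} to kill cross-interval intersections, and sort the residual into oscillation/Nash/transport pieces fed into an inverse divergence — is indeed the skeleton of the paper's proof. But there is a genuine gap at the heart of the scheme: your cancellation mechanism is the \emph{Euler} one, and it does not close for Euler-$\alpha$. You propose choosing $a_{\xi,i}$ so that $\sum_\xi a_{\xi,i}^2\,\xi\otimes\xi$ (the low-frequency part of $w_{q+1}\otimes w_{q+1}$) cancels the mollified stress, and you then relegate the $\alpha^2$ quadratic terms $\alpha^2\bigl(w_{q+1}\otimes\Delta w_{q+1}+\nabla w_{q+1}^T\nabla w_{q+1}\bigr)$ to the error pile in step (iv). This cannot work for two reasons. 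First, the sizes do not match: with $\|w_{q+1}\|_{L^2}\sim\delta_{q+1}^{1/2}$ one has $\|w\otimes w\|_{L^1}\sim\delta_{q+1}$, whereas the inductive bound \eqref{eq:inductive:stress} puts $\|\RR_q\|_{L^1}\sim\alpha^2\delta_{q+1}\lambda_{q+1}^2$, larger by a factor of $\lambda_{q+1}^2$. Second, and fatally for the "treat $\alpha^2$ as error" plan, the terms $\alpha^2 w\otimes\Delta w$ and $\alpha^2\nabla w^T\nabla w$ have nonzero spatial means of size $\alpha^2\delta_{q+1}\lambda_{q+1}^2$; no amount of inverse-divergence or integration by parts removes a mean, so these pieces cannot be made small — they \emph{must} be the ones that cancel $\RR_\ell$. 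The paper's Lemma~\ref{lem:averages} computes that for a radial Mikado profile
\[
\fint_{\T^3}\WW_\xi^k\partial_{mm}\WW_\xi^\ell+\partial_k\WW_\xi^j\partial_\ell\WW_\xi^j=\tfrac{\calc}{2}\bigl(\delta^{k\ell}-3\xi^k\xi^\ell\bigr),
\]
which is why the geometric Lemma~\ref{lem:linear:algebra} is formulated in terms of $3\xi\otimes\xi-\Id$ rather than $\xi\otimes\xi$, and why the cancellation in \eqref{eq:hailstorm:1} is set up against $w^{(p)}\otimes\Delta w^{(p)}+(\nabla w^{(p)})^T\nabla w^{(p)}-\alpha^{-2}\RR_\ell$. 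The $w\otimes w$ piece, by contrast, is of strictly lower order and is dumped directly into $\RR_{q+1}$ via \eqref{eq:corr:bound:one}.

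Two secondary points. Your claim that "within each $I_i$ the different $\xi$'s are mutually orthogonal" cannot hold: the geometric lemma needs at least six (the paper uses nine) distinct directions per interval, and one cannot place more than three mutually orthogonal unit vectors in $\R^3$. In $3$D the within-interval tubes are instead made pairwise \emph{disjoint} by translation; the minimal-orthogonality hypothesis \eqref{e:inner:product:control} of Proposition~\ref{prop:weak:decoupling} is reserved for tubes from \emph{different} intervals (and is also used for all within-interval pairs in $2$D, cf. Remark~\ref{rem:2d:one}). Finally, the inverse-divergence gain you quote for the oscillation error, $\lambda_{q+1}^{-1}$, should be $(\lambda_{q+1}r_q)^{-1}$ because the mean-zero part of the intermittent Mikado quadratic is only $(\lambda_{q+1}r_q)^{-1}$-periodic; this is why the parameter hierarchy requires inequalities like \eqref{ineq:ten}.
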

Assuming Proposition \ref{prop:main:iterative:proposition}, we now prove Theorem~\ref{thm:middle:third}.
\begin{proof}[Proof of Theorem~\ref{thm:middle:third}]
We proceed as in \cite{BCV18}. Let $\eta$ be a smooth cutoff function with the property that 
$\eta = 1$ on $\left[0, \frac{2T}{5} \right]$ and $ \eta = 0$ on $\left[\frac{3T}{5}, T \right]$. Define
\begin{equation}
    \label{eq:first:iterate}
    u_0 = \eta u^{(1)} + (1 - \eta) u^{(2)} \, ,
\end{equation}
where $u^{(1)}, u^{(2)}$ are as in Theorem \ref{thm:middle:third},
and 
\begin{align}
    &\RR_0 =    \mathcal{R}  \partial_t\eta \left(u^{(1)} - u^{(2)}\right)   \notag   \\
    &+ \eta (1 - \eta) \mathcal{R}\Div \left[ u^{(1)}\otimes (u^{(2)} - \alpha^2 \Delta u^{(2)}) + u^{(2)} \otimes (u^{(1)}  - \alpha^2 \Delta u^{(1)} )  - \alpha^2 \left( (\nabla u^{(1)})^T \nabla u^{(2)} +   (\nabla u^{(2)})^T \nabla u^{(1)}    \right)  \, .        \right] \notag  \end{align}
Above we have used the operator $\mathcal{R}$, which is defined in Proposition \ref{prop:fourier:inverse:div}; note that the usage of $\RR$ is justified since the inputs have zero mean. Then the pair $(u_0, \RR_0)$ solves the system \eqref{eq:inductive:equation} for an appropriate choice of pressure $p_0$, and $\supp_t \RR_0$ is contained in $[\sfrac{2T}{5}, \sfrac{3T}{5} ]$.
{We \emph{assume} for the moment that $(u_0, \RR_0)$ satisfy \eqref{eq:inductive}; at the end of the proof, we shall see that by a rescaling, this assumption may be imposed without loss of generality.}
We now inductively apply Proposition \ref{prop:main:iterative:proposition} to construct a sequence $\{(u_q, \RR_q)\}_{q=0}^{\infty}$. For $\beta' \in (1, \beta)$, we have  
\begin{align*}
   \sum_{q \geq 0} \|u_{q+1} - u_q \|_{H^{\beta'}} &\lesssim \sum_{q\geq 0} \| \nabla(u_{q+1} - u_q) \|_{L^2}^{2 - \beta'}\| \nabla^2 (u_{q+1} - u_q) \|_{L^2}^{\beta' - 1}\\
   &\lesssim (\lambda_{q+1} \delta_{q+1}^{\frac{1}{2}})^{2 - \beta'} (\lambda_{q+1}^2 \delta_{q+1}^{\frac{1}{2}})^{\beta' -1 }\\
   &\lesssim 1 \, ,
\end{align*}
where we have used \eqref{eq:iterate:size} and that the $u_q$ have zero mean. This implies the existence of a strong limit $u \in C^0_t H^{\beta'}$. By \eqref{eq:inductive}, $\RR_q$ converges to zero strongly in $L^1$.  Combining this with the regularity on $u_q$ implies that the limit $u$ satisfies the weak formulation \eqref{eq:weak:soln:euler:alpha}. Note also that by the inductive assumption on the support of the stress and the increment $ u_{q+1} - u_q$, we can conclude that the temporal support of $u- u_0$  is contained in the interval 
$$
\left( \frac{2T}{5} - \frac{1}{30}\sum_{q \geq 0} \lambda_q^{-1}\, , \quad \frac{3T}{5} + \frac{1}{30}\sum_{q \geq 0} \lambda_q^{-1} \right) \subset \left( \frac{2T}{5} - \frac{1}{30(1 -a^{-b})} \, , \quad \frac{3T}{5} + \frac{1}{30(1 -a^{-b})   }\right) \subset \left(\frac{T}{3}, \frac{2T}{3} \right)
$$
where we have used that $b\geq 2$  to deduce that $a^{bq} \leq a^{b^q}$ (for the first containment) and that $a$ is sufficiently large (for the second containment). As a consequence of this and the definition of $u_0$, \eqref{eq:middle:third} follows.

If $(u_0, \RR_0)$ do not satisfy \eqref{eq:inductive}, we note that \eqref{eq:euler:alpha1} is invariant under the rescaling $u(x ,t) \to \zeta u (x, \zeta t)$ for $\zeta \in \mathbb{R}$. Therefore, we can associate to $u^{(i)}$ defined on $[0,T]$ a function $u_{\zeta}^{(i)}(x,t) = \zeta u^{(i)}( x , \zeta t)$  defined on $[0, \zeta^{-1}T]$. By taking $\zeta$ sufficiently small depending on the choice of $a$ in \eqref{def:freq:amp}, we can ensure that the bounds in \eqref{eq:inductive} are satisfied for $u_{\zeta,0} := \eta_{\zeta} u^{(1)} + (1 - \eta_{\zeta}) u^{(2)}$, where $\eta_{\zeta}(t) = \eta(\zeta t)$, and the associated stress $\RR_{\zeta, 0}$. We can then inductively apply Proposition \ref{prop:main:iterative:proposition} as before to construct a weak solution $u_\zeta$ for \eqref{eq:euler:alpha} defined on $[0, \zeta^{-1} T]$. Rescaling the solution to produce $u$ defined on $[0,T]$ allows us to conclude the theorem.
\end{proof}

The remainder of this paper will be devoted to proving Proposition \ref{prop:main:iterative:proposition}.

\section{Mollification}
It will be necessary to mollify the equation in order to mitigate the loss of derivatives characteristic of convex integration schemes. We define the parameter $\ell$ in terms of $\lambda_q$ by
\begin{equation}\label{eq:ell:def}
    \ell=\lambda_q^{-8} \, .
\end{equation}

\begin{lemma}[Mollifying the equation]\label{lem:mollifying}

Assuming we have a pair $(u_q,\RR_q)$ satisfying \eqref{eq:inductive:equation} and \eqref{eq:inductive}, there exists a pair $(u_\ell, \RR_\ell)$ and a commutator stress $\RR_{\textnormal{comm}}$ satisfying the new equation 
\begin{subequations}
\label{eq:mollified:euler:alpha:statement}
\begin{align}
    \label{eq:velocity:mollified:equation:statement}
    \partial_t(u_\ell - \alpha^2 \Delta u_\ell )^l + \partial_k\left( u_\ell^k (u_\ell^l - \alpha^2  \Delta u_\ell^l) - \alpha^2 \partial_k u_\ell^j \partial_l u_\ell^j  \right) + \partial_l p_\ell &= \partial_k (\RR_\ell^{kl} + \RR_{\textnormal{comm}}^{kl})\\
    \label{eq:velocity:mollified:divergence:statement}
    \partial_l u_\ell^l &= 0 \, .
\end{align}
\end{subequations}
The new pair satisfies the inductive estimates \eqref{eq:inductive} as well as the higher-order estimates
\begin{equation}\label{eq:mollified:estimates}
    {\left\| \nabla_{t,x}^{m} u_\ell \right\|_{L^\infty} \lesssim {\ell^{-\sfrac{3}{2}-m}}} \, , \qquad \left\| \nabla_{t,x}^m \RR_\ell \right\|_{L^\infty} \lesssim {\ell^{-m-2}} \, , \qquad \left\| \RR_{\textnormal{comm}} \right\|_{L^1} \leq \lambda_q^{-1} \delta_{q+2}\lambda_{q+2}^2 \, 
\end{equation}
for $m\geq 0$.
\end{lemma}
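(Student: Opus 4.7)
The plan is to define the mollified quantities $u_\ell$ and $\RR_\ell$ by convolving $u_q$ and $\RR_q$ against standard nonnegative, compactly supported, unit-integral mollifiers $\phi_\ell(x)=\ell^{-n}\phi(x/\ell)$ in space and $\varphi_\ell(t)=\ell^{-1}\varphi(t/\ell)$ in time; that is, $u_\ell := u_q *_x \phi_\ell *_t \varphi_\ell$ and $\RR_\ell := \RR_q *_x \phi_\ell *_t \varphi_\ell$. Since convolution commutes with $\partial_t$, $\Div$, and $\Delta$, and is a contraction on every $L^p$ norm, $u_\ell$ remains divergence free and mean zero, $\RR_\ell$ remains symmetric and traceless, and the inductive assumptions \eqref{eq:inductive} transfer directly to $(u_\ell,\RR_\ell)$.

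For the higher-order estimates I would use Young's inequality: writing
\[
\nabla_{t,x}^m u_\ell = u_q *_{t,x}\nabla_{t,x}^m(\phi_\ell\otimes\varphi_\ell)
\]
and applying $L^\infty_t L^2_x * L^1_t L^2_x \to L^\infty_{t,x}$ gives $\|\nabla_{t,x}^m u_\ell\|_{L^\infty}\lesssim \|u_q\|_{L^\infty_t L^2_x}\,\ell^{-3/2-m}\lesssim \ell^{-3/2-m}$, where $\|u_q\|_{L^2}\lesssim 1$ follows from Poincar\'e applied to $\|\nabla u_q\|_{L^2}\leq 1$. The corresponding bound on $\RR_\ell$ follows by pairing the $L^1$ bound on $\RR_q$ with the $L^\infty$ norm of $\nabla^m(\phi_\ell\otimes\varphi_\ell)$, provided $\delta_{q+1}\lambda_{q+1}^2$ is absorbable by a suitable power of $\ell^{-1}$; this amounts to a parameter constraint of the schematic form $b(\beta-1)\geq C_0$ for a constant $C_0$ arising from the exponent bookkeeping.

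The mollified equation is obtained by applying the space-time convolution to \eqref{eq:inductive:equation}; all linear terms commute through the mollifier, and the nonlinear terms generate the commutator
\[
\RR_{\textnormal{comm}}^{kl} := \Bigl[u_q^k(u_q^l-\alpha^2\Delta u_q^l) - \alpha^2\partial_k u_q^j\partial_l u_q^j\Bigr]*_x\phi_\ell *_t\varphi_\ell - \Bigl[u_\ell^k(u_\ell^l-\alpha^2\Delta u_\ell^l) - \alpha^2\partial_k u_\ell^j \partial_l u_\ell^j\Bigr],
\]
after subtracting its trace and absorbing that into $p_\ell$. By construction, $(u_\ell,\RR_\ell + \RR_{\textnormal{comm}})$ then satisfies \eqref{eq:mollified:euler:alpha:statement}.

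The only non-routine step is the $L^1$ estimate on $\RR_{\textnormal{comm}}$. Here I would invoke the Constantin--E--Titi commutator identity which, for a symmetric mollifier with vanishing first moments, yields $\|(fg)_\ell - f_\ell g_\ell\|_{L^1}\lesssim \ell^2\|\nabla f\|_{L^2}\|\nabla g\|_{L^2}$, applied term by term to the tensor-valued nonlinearity $N(u_q):=u_q\otimes(u_q-\alpha^2\Delta u_q)-\alpha^2(\nabla u_q)^T\nabla u_q$. The worst contribution comes from the $u_q\otimes\Delta u_q$ term and uses $\|\nabla u_q\|_{L^2}\leq 1$ together with $\|\nabla\Delta u_q\|_{L^2}\lesssim\|\nabla^3 u_q\|_0\lesssim\lambda_q^4$; summing the contributions and using $\ell=\lambda_q^{-8}$ gives a bound of the form $\|\RR_{\textnormal{comm}}\|_{L^1}\lesssim \ell^2\lambda_q^8 = \lambda_q^{-8}$. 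The main obstacle is then purely bookkeeping: one must verify $\lambda_q^{-8}\leq \lambda_q^{-1}\delta_{q+2}\lambda_{q+2}^2 = \lambda_q^{-1-2b^2(\beta-1)}$, which requires $2b^2(\beta-1)\leq 7$. Together with the constraint coming from the $\RR_\ell$ bound, this pins down an admissible window of parameters $(b,\beta)$ and is the source of the scheme-wide requirement that $\beta>1$ be taken sufficiently close to $1$ relative to $b$.
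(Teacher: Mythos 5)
Your overall strategy—mollify in space and time, extract a commutator stress, control it with the Constantin--E--Titi argument, and close the parameter bookkeeping with $\ell=\lambda_q^{-8}$—is the right one and matches the paper. However, there is one genuine gap, specific to the Euler-$\alpha$ structure, that your definition of $\RR_{\textnormal{comm}}$ does not address: the tensor $u^k\Delta u^l$ is \emph{not symmetric} in $(k,l)$. The other two contributions, $u^k u^l$ and $\partial_k u^j\partial_l u^j$, are manifestly symmetric, so their direct commutator differences are admissible stress contributions after removing the trace. But for the $u\otimes\Delta u$ piece, the tensor
\[
u_\ell^k\Delta u_\ell^l - \bigl((u_q^k\Delta u_q^l)*\phi_\ell\bigr)*\varphi_\ell
\]
is neither symmetric nor does subtracting its trace make it so; it therefore cannot be absorbed into $\RR_\ell+\RR_{\textnormal{comm}}$, which must be symmetric traceless for the inductive scheme (Lemma~\ref{lem:linear:algebra} is designed to span precisely that class). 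The paper fixes this by replacing that piece with $\mathcal{R}^{kl}\Div\bigl(u_\ell\otimes\Delta u_\ell - ((u_q\otimes\Delta u_q)*\phi_\ell)*\varphi_\ell\bigr)$, where $\mathcal{R}$ is the Fourier-multiplier inverse divergence of Proposition~\ref{prop:fourier:inverse:div}, which produces a symmetric traceless output with the same divergence. This in turn changes the nature of the estimate: since $\mathcal{R}\circ\Div$ is only bounded $L^p\to L^p$ for $p\in(1,\infty)$, one cannot run the $L^1$-level CET bound $\ell^2\|\nabla f\|_{L^2}\|\nabla g\|_{L^2}$ on that term directly. The paper passes instead through $L^p$ for $p$ close to $1$, dominates by $L^\infty$, and uses the $L^\infty$-level double commutator estimate $\ell^2\|\nabla^3 u_\ell\|_{L^\infty}\|\nabla u_\ell\|_{L^\infty}\lesssim\ell^2\lambda_q^8$, which relies on the inductive $C^0$ bound $\|\nabla^3_{t,x}u_q\|_0\leq\lambda_q^4$ rather than the $L^2$ Sobolev bound you invoke.

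Two minor points. First, your sign convention for $\RR_{\textnormal{comm}}$ is reversed relative to what the equation requires (it should be $N(u_\ell)-(N(u_q))_\ell$, not the reverse); this is inconsequential for estimates. Second, your arithmetic ``$\ell^2\|\nabla u_q\|_{L^2}\|\nabla\Delta u_q\|_{L^2}\lesssim\ell^2\lambda_q^8$'' does not follow from the inputs you cite—those give $\ell^2\cdot 1\cdot\lambda_q^4$—but the weaker bound $\ell^2\lambda_q^8$ is in any case what the paper's $L^\infty$-based argument produces, and it still closes via inequality~\eqref{ineq:one} provided $\beta-1$ is small enough relative to $b$, as you correctly anticipate.
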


\begin{proof}
Let $\phi_{\ell}$ be a family of spatial Friedrichs mollifiers at scale $\ell$, and let $\varphi_{\ell}$ be a family of temporal Friedrichs mollifiers at scale $\ell$. Then defining
\begin{subequations}
\label{eq:mollification}
\begin{align}
    \label{def:velocity:mollified}
u_{\ell} &:= (u_q * \phi_{\ell})* \varphi_{\ell} \\
\RR_{\ell} &:= (\RR_q * \phi_{\ell})* \varphi_{\ell} \, ,
\end{align}
\end{subequations}
we have that the mollified objects satisfy the equation
\begin{subequations}
\label{eq:mollified:euler:alpha}
\begin{align}
    \label{eq:velocity:mollified:equation}
    \partial_t(u_\ell - \alpha^2 \Delta u_\ell )^l + \partial_k\left( u_\ell^k (u_\ell^l - \alpha^2  \Delta u_\ell^l) - \alpha^2 \partial_k u_\ell^j \partial_l u_\ell^j  \right) + \partial_l p_\ell &= \partial_k (\RR_\ell^{kl} + R_{\text{comm}}^{kl})\\
    \label{eq:velocity:mollified:divergence}
    \partial_l u_\ell^l &= 0 \, .
\end{align}
\end{subequations}
In the above display, $R_{\textnormal{comm}}^{kl}$ is a symmetric (but not yet traceless) tensor given by the formula
\begin{align}
    \label{eq:commutator:stress}
    R_{\text{comm}}^{kl} &:= u_\ell^ku_\ell^l -  ((u_q^k u_q^l) * \phi_\ell ) * \varphi_{\ell}  - \alpha^2 \left(     \partial_k u_\ell^j \partial_l u_\ell^j - (( \partial_k u_q^j \partial_l u_q^j )* \phi_\ell )* \varphi_\ell  \right) \notag\\
    &\qquad - \alpha^2 \mathcal{R}^{kl} \Div \left( u_\ell \otimes \Delta u_\ell -  ((u_q \otimes \Delta u_q)* \phi_\ell)*  \varphi_\ell  \right) \, .
\end{align}

Using the double commutator estimate from \cite{CET94} (see also Proposition E.1 from \cite{BDLISZ15}) and \eqref{eq:inductive}, we may estimate the last term in $L^1$ by
\begin{align*}
    \| \mathcal{R}\Div (\left( u_\ell \otimes \Delta u_\ell -  ((u_q \otimes  \Delta u_q)* \phi_\ell)*  \varphi_\ell  \right) )\|_{L^1}
    &\lesssim \| \left( u_\ell \otimes \Delta u_\ell -  ((u_q \otimes  \Delta u_q)* \phi_\ell)*  \varphi_\ell  \right) \|_{L^p} \\
    &\lesssim  \| \left( u_\ell \otimes \Delta u_\ell -  ((u_q \otimes \Delta u_q)* \phi_\ell)*  \varphi_\ell  \right) \|_{L^{\infty}}\\ 
    &\lesssim \ell^2 \|\nabla^3 u_\ell \|_{L^{\infty}} \left\| \nabla u_\ell \right\|_{L^\infty}\\
    &\lesssim \ell^2 \lambda_q^8 \\
    &{\leq \lambda_q^{-1} \alpha^2 \cdot \calc_\RR \cdot \delta_{q+2}\lambda_{q+2}^2} \, ,
\end{align*}
where $p \in (1, \infty)$ is close to $1$, we used the bound for $\mathcal{R} \Div $ from Proposition~\ref{prop:fourier:inverse:div}, and we appealed to inequality \eqref{ineq:one}. The terms on the first line of \eqref{eq:commutator:stress} obey similar bounds, and we omit further details. Subtracting off the trace of $R_{\textnormal{comm}}^{kl}$ and absorbing it into the pressure concludes the proof of the estimate for $\RR_{\textnormal{comm}}$.  The estimates for $u_\ell$ and $\RR_\ell$ follow from Young's inequality for convolutions, and we omit further details.
\end{proof}

\section{Linear Algebra}
We begin with the linear algebra lemma which ensures that there are sets of vectors $v$ for which the linear combinations of simple tensors $v\otimes v$ can be used to ``span" a set of symmetric traceless matrices. The significance of the quantity $3v\otimes v - \Id$ in this Lemma will become clear after Lemma~\ref{lem:averages}. The proof can be found in Appendix \ref{app:linear:algebra}.

\begin{lemma}[Linear Algebra]\label{lem:linear:algebra}
Let $N\in\mathbb{N}$ be given. Then there exists a fixed positive constant $\calc_{\rm sum}>0$, a small positive number $\varepsilon>0$, and sets of {nine} distinct vectors $\mathcal{K}_n=\{k_i^n\}_{i=1}^9\subset \mathbb{S}^2\cap \mathbb{Q}^3$ indexed by $n\in\{0,\dots,N\}$ such that the following holds. First, if $n_1\neq n_2$, then $\mathcal{K}_{n_1}\cap\mathcal{K}_{n_2}=\emptyset$.  Secondly, let $\RR$ be a smooth, symmetric, traceless matrix with $\left\| \RR \right\|_{0}\leq {\varepsilon}$. Then there exist smooth, strictly positive functions $\left\{c_i^n\left(\RR\right)\right\}_{i=1}^9$ depending on $\RR$ and $n$ such that for all $n\in\{0,\dots,N\}$,
\begin{align}\label{eq:to:check}
    \sum_{i=1}^9 \left(c_i^n\right)^2\left(\RR\right)\left( 3 k_i^n \otimes k_i^n - \textnormal{Id} \right) = \RR \, , \qquad \sum_{i=1}^9 \left(c_i^n\right)^2\left(\RR\right) =  \calc_{\rm sum} \, .
\end{align}
\end{lemma}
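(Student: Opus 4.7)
The overall strategy is to reduce the statement to a finite-dimensional linear algebra problem via the substitution $d_i^n := (c_i^n)^2$, produce a single configuration of nine rational unit vectors for which the linear system is solvable with strictly positive coefficients at $\RR = 0$, and then exploit openness plus the density of $\mathbb{S}^2 \cap \mathbb{Q}^3$ to produce $N+1$ disjoint configurations on which the same argument works.

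After the substitution, it suffices to find, for each configuration $\mathcal{K}_n = \{k_i^n\}_{i=1}^9 \subset \mathbb{S}^2 \cap \mathbb{Q}^3$, a smooth map $\RR \mapsto (d_1^n(\RR),\ldots,d_9^n(\RR)) \in \mathbb{R}_{>0}^9$ satisfying the $6 \times 9$ affine linear system consisting of $5$ scalar equations encoding $\sum_i d_i (3 k_i^n \otimes k_i^n - \Id) = \RR$ in the $5$-dimensional space of traceless symmetric $3\times 3$ matrices, together with the normalization $\sum_i d_i = \calc_{\rm sum}$. Setting $c_i^n := \sqrt{d_i^n}$ then yields a smooth strictly positive solution of \eqref{eq:to:check}.

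For the base configuration, a candidate is to take the six octahedral unit vectors $\pm e_1, \pm e_2, \pm e_3$ together with three additional rational unit vectors in general position (for instance the columns of a rational orthogonal matrix obtained from a standard Pythagorean parametrization). With equal positive weights on the six octahedral vectors, the identity $\sum_{j=1}^3 (3 e_j \otimes e_j - \Id) = 0$ produces the zero traceless part, and the three extra vectors supply a configuration in which the nine matrices $\{3 k_i \otimes k_i - \Id\}$ span traceless symmetric matrices (an explicit and generic open condition); adjusting all nine weights gives a positive base solution $\bar d_i > 0$ of the full augmented $6\times 9$ system at $\RR=0$ with prescribed $\calc_{\rm sum}$. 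Because the coefficient matrix then has rank $6$, the preimage of $(\RR,\calc_{\rm sum})$ is a nonempty affine $3$-plane depending smoothly on $\RR$; selecting a smooth section (for example, via the Moore-Penrose pseudoinverse shifted to pass through $\bar d$) yields smooth $d_i^n(\RR)$, and taking $\varepsilon$ small enough preserves positivity by continuity.

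Finally, to produce $N+1$ disjoint sets, I would proceed inductively. All of the conditions used above on a configuration $(k_1,\ldots,k_9) \in (\mathbb{S}^2)^9$ — distinctness, full rank of the augmented matrix, and strict positivity of some $\bar d_i$ — are open. Since $\mathbb{S}^2 \cap \mathbb{Q}^3$ is dense in $\mathbb{S}^2$ (e.g.\ by stereographic projection from a rational base point, which identifies rational unit vectors with rational points of $\R^2 \cup \{\infty\}$), I can perturb the base configuration to obtain $\mathcal{K}_0,\dots,\mathcal{K}_N$, choosing each $\mathcal{K}_n$ so close to the baseline that the open conditions persist and so generic that $\mathcal{K}_n \cap \mathcal{K}_m = \emptyset$ for $m<n$. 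The constant $\calc_{\rm sum}$ is fixed by the base configuration and enforced uniformly across all $n$ by the normalization line of the linear system. The main subtlety is simultaneously enforcing disjointness, rationality, and the positivity/rank conditions; this is handled precisely because the latter form a nonempty open set and the former two constraints cut out a dense subset of that open set.
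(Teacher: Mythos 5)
There is a genuine gap in the base construction. You propose taking the six octahedral vectors $\pm e_1,\pm e_2,\pm e_3$ together with three extra rational unit vectors. But the tensor $3k\otimes k - \Id$ is invariant under $k\mapsto -k$, so the pairs $\{e_j,-e_j\}$ produce \emph{identical} columns in your $6\times 9$ coefficient matrix, including the normalization row (both equal $1$). Hence the kernel of the $6\times 9$ system contains the three independent vectors supported on those antipodal pairs, $(1,-1,0,\dots,0)$, $(0,0,1,-1,0,\dots,0)$, $(0,0,0,0,1,-1,0,0,0)$. If the rank is $6$ (which you need in order to solve for arbitrary $\RR$), then the kernel is \emph{exactly} this three-dimensional space, so it contains no element with nonzero entries in the last three coordinates. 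Consequently the values $d_7^n,d_8^n,d_9^n$ are uniquely determined by $(\RR,\calc_{\rm sum})$; since $(w,\dots,w,0,0,0)$ with $6w=\calc_{\rm sum}$ solves the system at $\RR=0$, \emph{every} solution at $\RR=0$ has $d_7^n=d_8^n=d_9^n=0$. You therefore cannot produce a strictly positive base solution $\bar d$, and the Moore--Penrose/openness step has nothing to perturb off of. The issue is structural: because $f(k)=f(-k)$, antipodal pairs waste slots, so you never actually get nine independent rank-one directions from which to build a positive interior solution.

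The paper's construction avoids this by choosing nine genuinely distinct rank-one projections (one triple of axis vectors and two ``mirrored'' triples like $(3e_1\pm 4e_2)/5$, etc.), together with explicit balancing identities, namely $f(e_1)+f(e_2)+f(e_3)=0$ and the cancellation of off-diagonal entries when summing the $k_4,\dots,k_9$ pieces. Adding large positive multiples of these balanced combinations pushes any given solution into the strictly positive orthant without changing the target matrix, which is exactly the mechanism your proposal lacks. Your remaining steps — reduction to the affine system in $d_i=(c_i)^2$, smooth dependence by a linear right-inverse, and producing disjoint $\mathcal{K}_0,\dots,\mathcal{K}_N$ via small rational perturbations (versus the paper's rational rotations) — are sound, but the base configuration must be replaced by one without antipodal repetitions and for which a strictly positive interior solution at $\RR=0$ can actually be exhibited.
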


\section{Technical Preliminaries}

\subsection{Decoupling}
\subsubsection{\texorpdfstring{$L^p$}{decouple} decoupling lemma}

The following lemma may be found in \cite{BV19} as Lemma 3.7. Although it is stated there for $\mathbb{T}^3$, the proof adapts easily to other dimensions (up to adjustments of certain constants). 

\begin{lemma}[$L^p$ decoupling]
\label{lem:Lp:independence}
Fix integers $\Ndec  \geq 1$, $\mu \geq \lambda \geq 1$ and assume that these integers obey
\begin{align} 
 \lambda^{\Ndec +4}    \leq \left(\frac{\mu}{2\pi\sqrt{3}}  \right)^{\Ndec} 
\,.
\label{eq:Lp:independence:assumption}
\end{align}
Let $p \in \{1,2\}$, and let $f$ be a $\T^3$-periodic function such that 
\begin{align}
\max_{0 \leq N \leq \Ndec+4} \lambda^{-N} \|D^N f\|_{L^p} \leq \calc_f
\label{eq:Lp:independence:assumption:2}
\end{align}
for a constant  $\calc_f > 0$.  Then for any $(\T/\mu)^{3}$-periodic function $g$, we have that 
\begin{align}\notag
 \|f g \|_{L^p} \lesssim \calc_f \|g\|_{L^p} \,,
\end{align}
where the implicit constant is universal (in particular, independent of $\mu$ and $\lambda$).
\end{lemma}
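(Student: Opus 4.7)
The plan is to exploit the two-scale structure of the problem: $g$ is periodic with period $\sim 1/\mu$, while hypothesis \eqref{eq:Lp:independence:assumption:2} constrains $f$ to vary on the much longer scale $\sim 1/\lambda$. Heuristically, on any cube of side $\sim 1/\mu$ the function $f$ is nearly constant while $|g|^p$ averages to its spatial mean $|\T^3|^{-1}\|g\|_{L^p}^p$, so one expects
\begin{equation*}
\|fg\|_{L^p}^p=\int_{\T^3}|f|^p|g|^p\,dx\approx |\T^3|^{-1}\|f\|_{L^p}^p\,\|g\|_{L^p}^p\leq \calc_f^p\,\|g\|_{L^p}^p,
\end{equation*}
using $\|f\|_{L^p}\leq\calc_f$ from the $N=0$ case of \eqref{eq:Lp:independence:assumption:2}.

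To formalize this, I would expand $|g|^p$ in its $(\T/\mu)^3$-Fourier series $|g|^p(x)=\sum_{m\in\Z^3}c_m\,e^{i\mu m\cdot x}$, with $|c_m|\lesssim\|g\|_{L^p}^p$ since $c_m$ is a normalized average of $|g|^p$ over the small period cube. Writing
\begin{equation*}
\|fg\|_{L^p}^p=\sum_{m\in\Z^3}c_m\int_{\T^3}|f|^p\,e^{i\mu m\cdot x}\,dx,
\end{equation*}
the $m=0$ term is $c_0\,\|f\|_{L^p}^p\lesssim \calc_f^p\,\|g\|_{L^p}^p$, producing the desired main contribution. For $m\neq 0$ I would perform $\Ndec+4$ integrations by parts to obtain
\begin{equation*}
\Bigl|\int_{\T^3}|f|^p\,e^{i\mu m\cdot x}\,dx\Bigr|\lesssim (\mu|m|)^{-(\Ndec+4)}\,\bigl\|D^{\Ndec+4}|f|^p\bigr\|_{L^1}.
\end{equation*}
For $p=2$, Leibniz's rule gives $\|D^N|f|^2\|_{L^1}\lesssim\sum_{k\leq N}\|D^kf\|_{L^2}\|D^{N-k}f\|_{L^2}\lesssim \lambda^N\calc_f^2$ via \eqref{eq:Lp:independence:assumption:2}. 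Summing in $m\neq 0$ uses that $\sum_{m\neq 0}|m|^{-(\Ndec+4)}<\infty$ (since $\Ndec+4\geq 5>3$) together with a factor $(\lambda/\mu)^{\Ndec+4}$, which the hypothesis \eqref{eq:Lp:independence:assumption} bounds by $(2\pi\sqrt{3})^{-\Ndec}\mu^{-4}\lesssim 1$; the total error is therefore absorbed into the main term.

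The main obstacle is the $p=1$ case: $|f|$ is not smooth, so $D^N|f|$ cannot be pointwise dominated by $D^Nf$, and indeed the distributional second derivative of $|f|$ carries Dirac masses on the zero set of $f$. The standard fix is to regularize via $F_\varepsilon:=(|f|^2+\varepsilon^2)^{1/2}$, compute $D^NF_\varepsilon$ by Fa\`a di Bruno as a polynomial expression in $f$, $\overline{f}$, their derivatives up to order $N$, and negative powers of $F_\varepsilon$; careful bookkeeping (the negative powers of $F_\varepsilon$ get integrated against factors vanishing at comparable rates near the zero set) yields $\|D^NF_\varepsilon\|_{L^1}\lesssim \lambda^N\calc_f$ uniformly in $\varepsilon$, from which the bound for $|f|$ follows by passing to the limit. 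An equivalent, perhaps cleaner, route is a direct real-space partition: cut $\T^3$ into $\mu^3$ cubes of side $2\pi/\mu$ matched to $g$'s period, expand $f$ by Taylor to degree $\Ndec$ at each cube center, bound the $L^\infty$ remainder via Sobolev from $\|D^{\Ndec+4}f\|_{L^1}\lesssim \lambda^{\Ndec+4}\calc_f$, and observe that integrating each monomial in the Taylor polynomial against $g$ on a small cube produces only a moment of $g$ bounded by $\|g\|_{L^1}$. In either scheme, the trade-off between the $\mu^{-(\Ndec+4)}$ gain from integration by parts and the $\lambda^{\Ndec+4}$ cost from derivatives is exactly what \eqref{eq:Lp:independence:assumption} accommodates, with the $(2\pi\sqrt{3})^{\Ndec}$ factor encoding the diagonal of the small cube.
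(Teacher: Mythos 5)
The paper itself does not prove this lemma; it simply cites Lemma~3.7 of \cite{BV19}. The proof there is a real-space argument: partition $\T^3$ into cubes of side $2\pi/\mu$ aligned with the period of $g$, and on each cube $Q_i$ iterate the elementary inequality $|f(x)|\leq |\overline{f}_i| + \mathrm{diam}(Q_i)\sup_{Q_i}|Df|$, where $\overline{f}_i$ is the cube average, exactly as the paper does in the proof of Proposition~\ref{prop:weak:decoupling}; after $\Ndec$ iterations one has $|f(x)|\leq\sum_{m<\Ndec}\mathrm{diam}(Q_i)^m|\overline{D^mf}_i|+\mathrm{diam}(Q_i)^{\Ndec}\|D^{\Ndec}f\|_{L^\infty}$, the averages sum to $\|D^mf\|_{L^1}$ exactly, the remainder absorbs the Sobolev loss $W^{4,p}\hookrightarrow L^\infty$, and hypothesis \eqref{eq:Lp:independence:assumption} is what makes the geometric factors $(\mathrm{diam}\cdot\lambda)^m$ acceptable. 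Your Fourier approach is a genuinely different route, and for $p=2$ it is correct: $|f|^2=f\bar{f}$ is smooth, Leibniz controls $\|D^N|f|^2\|_{L^1}$ by $\lambda^N\calc_f^2$, and the $m\neq0$ tail is summable and controlled.

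Your $p=1$ argument has a genuine gap. The regularization $F_\varepsilon=(|f|^2+\varepsilon^2)^{1/2}$ does \emph{not} satisfy $\|D^NF_\varepsilon\|_{L^1}\lesssim\lambda^N\calc_f$ uniformly in $\varepsilon$ once $N\geq3$, no matter how the negative powers of $F_\varepsilon$ are bookkept. Concretely, take $f(x)=\sin x$ on $\T$ (so $\calc_f\lesssim1$ with $\lambda\sim1$); near $x=0$ one computes $F_\varepsilon''=\varepsilon^2(x^2+\varepsilon^2)^{-3/2}$ and $F_\varepsilon'''=-3\varepsilon^2x(x^2+\varepsilon^2)^{-5/2}$, and $\int|F_\varepsilon'''|\,dx=2/\varepsilon\to\infty$. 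The underlying obstruction is that $|f|$ is only Lipschitz across the zero set of $f$: $D^2|f|$ is a measure with atoms there, and any mollification at scale $\varepsilon$ has $D^3\sim\varepsilon^{-2}$ on a set of size $\varepsilon$. Since $\Ndec+4\geq5$ integrations by parts are required, the Fourier route as written cannot close for $p=1$. Relatedly, one cannot sidestep this by decaying $\int|f|e^{i\mu m\cdot x}$ directly: that Fourier coefficient decays only like $(\mu|m|)^{-2}$ for generic Lipschitz $|f|$, and $\sum_{m\neq0}|m|^{-2}$ diverges in three dimensions.

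Your second, ``cleaner'' real-space route is much closer to the correct argument, but it should be run with cube averages $\overline{D^mf}_i$ rather than pointwise values $D^\alpha f(x_i)$ at cube centers. With pointwise values, the sum $\sum_i|D^\alpha f(x_i)|\,|Q_i|$ is only a Riemann sum for $\|D^\alpha f\|_{L^1}$, and the cheapest rigorous bound passes through $\|D^\alpha f\|_{L^\infty}\lesssim\lambda^{|\alpha|+4}\calc_f$, which costs an extra $\lambda^4$ already at order $|\alpha|=0$. Using averages, $\sum_i|\overline{D^m f}_i|\,|Q_i|\leq\|D^mf\|_{L^1}$ is an equality up to the triangle inequality, the Sobolev loss appears only in the single remainder term, and \eqref{eq:Lp:independence:assumption} is exactly calibrated to absorb it. That is precisely the structure of the \cite{BV19} proof (and of Proposition~\ref{prop:weak:decoupling} in this paper).
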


\subsubsection{Decoupling for intersections of deformed intermittent pipes}
Before stating the main decoupling result for products of deformed pipes in Proposition~\ref{prop:weak:decoupling}, we recall the following lemma from \cite{bmnv21}. The version written below is nearly identical to Lemma 4.7 from subsection 4.2 of \cite{bmnv21}. The only difference is that the analogue in \cite{bmnv21} of the Lipschitz bound in \eqref{e:axis:derivative:bounds} is quite sharp, whereas \eqref{e:axis:derivative:bounds} is lossy. {Since the timescale $\tau$ has been shortened commensurately in this version of the Lemma, the proof is identical in spirit, and we refer the reader to \cite{bmnv21} for more details.}

\begin{lemma}[\bf Control on Axes, Support, and Spacing]
\label{lem:axis:control}
Consider a {convex neighborhood} of space $\Omega\subset \mathbb{T}^3$.  Let $v$ be an incompressible velocity field, and define the flow $X(x,t)$ as the solution to
\begin{subequations}\label{e:flow}
\begin{align}
\partial_t X(x,t) &= v\left(X(x,t),t\right) \\
X|_{t=t_0} &= x\, ,
\end{align}
\end{subequations}
and inverse $\Phi(x,t)=X^{-1}(x,t)$ as the solution to
\begin{subequations}\label{e:transport}
\begin{align}
\partial_t \Phi + v\cdot\nabla \Phi &=0 \\
\Phi_{t=t_0} &= x\, .
\end{align}
\end{subequations}
Define $\Omega(t):=\{ x\in\mathbb{T}^3 : \Phi(x,t) \in \Omega \} = X(\Omega,t)$, and let ${\tau = \ell^3}$  be fixed.  Suppose that for $(x,t)\in\Omega(t)\times[t_0-\tau,t_0+\tau]$,
\begin{equation}\label{e:axis:derivative:bounds}
    | \nabla v(x,t) | \lesssim {\ell^{-\frac{3}{2}}} \, 
\end{equation}
{where the implicit constant is independent of $q$}. 
Let $\WW_{\lambda_{q+1},r,\xi}:\mathbb{T}^3\rightarrow\mathbb{R}^3$ be a set of straight pipe flows constructed as in Proposition~\ref{pipeconstruction} which are $\frac{\mathbb{T}^3}{\lambda_{q+1}r}$-periodic for $\frac{\lambda_q}{\lambda_{q+1}}\leq r\leq 1$ and concentrated around axes $\{A_i\}_{i\in\mathcal{I}}$ oriented in the vector direction $\xi$ for $\xi\in\mathcal{K}_n$.  Then $\WW:=\WW_{\lambda_{q+1},r,\xi}(\Phi(x,t)):\Omega(t)\times[t_0-\tau,t_0+\tau]$ satisfies the following conditions:
\begin{enumerate}
    \item If $x$ and $y$ belong to a particular axis $A_i\subset\Omega$, then 
    \begin{equation}\label{e:axis:variation}
    \frac{X(x,t)-X(y,t)}{|X(x,t)-X(y,t)|} = \xi + \delta_i(x,y,t)
    \end{equation}
    where $|\delta_i(x,y,t)|<\ell^{{\sfrac{1}{3}}}$.
    \item Let $x$ and $y$ belong to a particular axis $A_i\subset\Omega$.  Denote the length of the axis $A_i(t):=X(A_i\cap\Omega,t)$ in between $X(x,t)$ and $X(y,t)$ by $L(x,y,t)$.  Then
    \begin{equation}\label{e:axis:length}
    L(x,y,t) \leq \left(1+\ell^{\sfrac{1}{3}}\right)\left| x-y \right|.
    \end{equation}
    \item The support of $\WW$ is contained in a $\calc_\xi\lambda_{q+1}^{-1}$-neighborhood of 
    \begin{equation}\label{e:axis:union}
       \bigcup_{i} A_i(t) \, ,
    \end{equation}
    where $\calc_\xi$ is a dimensional constant depending only on $N$ and the sets $\mathcal{K}_n$ from Lemma~\ref{lem:linear:algebra}.
    \item $\WW$ is ``approximately periodic" in the sense that for distinct axes $A_i,A_j$ with $i\neq j$ and $\dist(A_i\cap\Omega,A_j\cap\Omega)=d$,
\begin{equation}\label{e:axis:periodicity:1}
    \left(1-\ell^{\sfrac{1}{3}}\right)d \leq \dist\left(A_i(t),A_j(t)\right)\leq \left(1+\ell^{\sfrac{1}{3}}\right)d.
\end{equation}
\end{enumerate}

\end{lemma}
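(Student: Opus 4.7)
The plan is to first establish a near-isometry estimate for the Lagrangian flow on the timescale $\tau=\ell^3$, and then read off each of the four claims as a direct geometric consequence. The key observation is that the hypothesized Lipschitz bound $|\nabla v|\lesssim \ell^{-3/2}$ paired with $\tau=\ell^3$ produces a deformation of size $\ell^{3/2}$, which is comfortably smaller than the $\ell^{1/3}$ tolerance appearing in \eqref{e:axis:variation}, \eqref{e:axis:length}, and \eqref{e:axis:periodicity:1}.

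First, I would differentiate the flow equation to obtain $\partial_t \nabla X = \nabla v(X,t)\,\nabla X$, and apply Gr\"onwall's inequality together with \eqref{e:axis:derivative:bounds} to conclude
\begin{equation*}
\|\nabla X(\cdot,t)-\Id\|_{L^\infty(\Omega)} \lesssim \exp(\tau\|\nabla v\|_{L^\infty})-1 \lesssim \tau\|\nabla v\|_{L^\infty} \lesssim \ell^{3/2}
\end{equation*}
for all $t\in[t_0-\tau,t_0+\tau]$, and similarly $\|\nabla \Phi-\Id\|_{L^\infty(\Omega(t))}\lesssim \ell^{3/2}$. This step, which mirrors the corresponding estimate in \cite{bmnv21}, is the only genuinely analytic ingredient; everything below is a geometric consequence.

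For items (1) and (2), I would parametrize a chord of $A_i$ as $y=x+s\xi$ with $s=|y-x|$ and apply the fundamental theorem of calculus along the segment to write
\begin{equation*}
X(y,t)-X(x,t) = s\xi + \int_0^s (\nabla X(x+s'\xi,t)-\Id)\xi\,ds'.
\end{equation*}
The first estimate then forces the unit direction of $X(y,t)-X(x,t)$ to differ from $\xi$ by $O(\ell^{3/2})\ll \ell^{1/3}$, giving \eqref{e:axis:variation}, and the arclength $L(x,y,t)=\int_0^s |\nabla X(\cdot)\xi|\,ds'$ to obey $L\leq (1+C\ell^{3/2})|x-y|\leq (1+\ell^{1/3})|x-y|$, giving \eqref{e:axis:length}. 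For item (3), Proposition~\ref{pipeconstruction} states that the undeformed pipe $\WW_{\lambda_{q+1},r,\xi}$ is supported in a $\calc_\xi \lambda_{q+1}^{-1}$-neighborhood of $\bigcup_i A_i$; since $\WW=\WW_{\lambda_{q+1},r,\xi}\circ\Phi$ has support equal to the $X$-image of that set, and $X$ distorts distances by a factor at most $1+C\ell^{3/2}\leq 2$, the support sits inside a $2\calc_\xi \lambda_{q+1}^{-1}$-neighborhood of $\bigcup_i A_i(t)$, and the factor of $2$ can be absorbed into the constant $\calc_\xi$. Item (4) follows by selecting points realizing $\dist(A_i\cap\Omega,A_j\cap\Omega)$ and then $\dist(A_i(t),A_j(t))$ separately, transporting them under $X$ and $\Phi$ respectively, and invoking the bi-Lipschitz estimate from the first step to obtain the two-sided comparison \eqref{e:axis:periodicity:1}.

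The only potential obstacle is the Gr\"onwall step, and even there the computation is standard; the slack between $\ell^{3/2}$ and $\ell^{1/3}$ is so generous that implicit constants never need to be tracked carefully, which is precisely what allows the proof to be streamlined relative to the sharper version in \cite{bmnv21}.
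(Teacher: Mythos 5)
Your proposal is correct and follows the expected route. The paper itself does not prove this lemma: it explicitly states the statement is a lossy variant of Lemma~4.7 from \cite{bmnv21}, that the timescale $\tau$ has been shortened commensurately with the weaker Lipschitz bound, and that the proof is ``identical in spirit'' to the one there. Your argument — Gr\"onwall on $\partial_t \nabla X = \nabla v(X,t)\nabla X$ to get $\|\nabla X - \Id\|, \|\nabla\Phi - \Id\| \lesssim \tau\|\nabla v\|_{L^\infty} \lesssim \ell^{3/2} \ll \ell^{1/3}$, followed by the fundamental theorem of calculus along chords of $A_i$ (which requires convexity of $\Omega$, as hypothesized) for items (1)--(2), the bi-Lipschitz image of the $\calc_\xi\lambda_{q+1}^{-1}$-tubular neighborhood for item (3), and transporting near-optimal pairs of points forward and backward for the two-sided bound in item (4) — is exactly that spirit, and the arithmetic ($\ell^{3/2} < \ell^{1/3}$ for $\ell < 1$) is what the deliberately generous tolerance $\ell^{1/3}$ is designed to accommodate.
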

We now state and prove the main proposition which allows us to obtain a decoupling-style estimate for the product of two pipe flows with equal periodicity but oriented around axes with non-parallel tangent vectors. 

\begin{proposition}[\bf Decoupling of Minimally Orthogonal Pipes] \label{prop:weak:decoupling}
Consider neighborhoods of space $\Omega^1$ and $\Omega^2$ {satisfying the assumptions of Lemma~\ref{lem:axis:control}}.  Let $v$ be an incompressible velocity field, and define 
\begin{subequations}\label{e:transport:1}
\begin{align}
\partial_t \Phi_1 + v\cdot\nabla \Phi_1 &=0 \\
\Phi|_{t=t_1} &= x\, .
\end{align}
\end{subequations}
and
\begin{subequations}\label{e:transport:2}
\begin{align}
\partial_t \Phi_2 + v\cdot\nabla \Phi_2 &=0 \\
\Phi|_{t=t_2} &= x\, .
\end{align}
\end{subequations}
Define $\Omega^1(t)$ and $\Omega^2(t)$ as in Lemma~\ref{lem:axis:control}, and assume that $v$ satisfies the usual estimate \eqref{e:axis:derivative:bounds} on $\Omega^1(t)\cup \Omega^2(t)$.  For $\xi_1,\xi_2$ belonging to $\mathcal{K}_{n_1}$, $\mathcal{K}_{n_2}$, with $n_1\neq n_2$, let $\WW^1_{\lambda_{q+1},r_2,\xi_1}:\Omega^1\rightarrow \mathbb{R}^3$ and $\WW^2_{\lambda_{q+1},r_2,\xi_2}:\Omega^2\rightarrow \mathbb{R}^3$ be pipe flows with \emph{shared} periodicity $(\lambda_{q+1}r_2)^{-1}$ but concentrated around axes with \emph{distinct} unit vector directions $\xi_1,\xi_2\in\mathbb{S}^2$ such that there exists $\epsilon_0>0$ satisfying
\begin{equation}\label{e:inner:product:control}
| \langle \xi_1,\xi_2 \rangle | \leq 1 - \epsilon_0 \, , \qquad {\ell^{\sfrac{1}{3}}}\leq\frac{\epsilon_0}{2} \, .
\end{equation}
Define $\WW^1:=\WW^1_{\lambda_{q+1},r_2,\xi_1}(\Phi_1)$ and $\WW^2:=\WW^2_{\lambda_{q+1},r_2,\xi_2}(\Phi_2)$ to be the transported pipe flows with shifting spatial domains $\Omega^1(t)$ and $\Omega^2(t)$, respectively. Set ${\tau=\ell^3}$ and assume that there exists 
$$t_0\in [t_1-\tau,t_1+\tau]\cap [t_2-\tau,t_2+\tau]$$
such that $\Omega^1(t_0)\cap\Omega^2(t_0) \neq \emptyset$.  Set
$$\Omega(t):=\Omega^1(t)\cap\Omega^2(t)\, . $$
Let $f:\Omega(t)\times[t_1-\tau,t_1+\tau]\cap[t_2-\tau,t_2+\tau]$ be a function such that
\begin{equation}\label{e:derivatives:on:f}
    \left\| D^n f \right\|_{L^\infty_t\left(L^1(\Omega(t))\right)} \leq {\mathcal{C}_f} \left( \lambda_{q+1} r_1 \right)^n \, , \qquad  n\leq \Ndec+4 
\end{equation}
{where $m_1,m_2,\Ndec\in\mathbb{N}$ and $\Ndec,r_1,r_2$ satisfy}
\begin{equation}\label{e:r:decoupling}
   0\leq r_1 \leq r_2 \leq 1\, , \qquad \frac{r_1}{r_2}\leq \frac{2}{3}\, , \qquad (\lambda_{q+1}r_1)^{\Ndec +4} \leq \left(\lambda_{q+1}r_2\right)^\Ndec \, .
\end{equation}
Furthermore, assume that for $i=1,2$ and $\tau\in[t_i-\tau,t_i+\tau]$, $\Phi_i$ satisfies
\begin{equation}
    \label{eq:deformation:bounds:decoupling}
  {  |\nabla \Phi_i - \Id| \leq \ell \, , \qquad \| \nabla^{n}\Phi_i \|_{L^{\infty}} \leq \calc_\Phi(\lambda_{q+1}r_1)^{n-1} \, , \qquad 1 \leq n \leq \max( m_1, m_2) \, . }
\end{equation}
Then for $t\in[t_1-\tau,t_1+\tau]\cap[t_2-\tau,t_2+\tau]$, 
\begin{equation}\label{e:weak:decoupling:conclusion}
    \left\| f(t,\cdot) \nabla^{m_1}\WW^1(t,\cdot) \otimes \nabla^{m_2}\WW^2(t,\cdot) \right\|_{L^1(\Omega(t))} \lesssim  {r_2\cdot\mathcal{C}_f\cdot\lambda_{q+1}^{m_1 + m_2}}  
\end{equation}
for a universal implicit constant depending on $\epsilon_0$, the sets $\mathcal{K}_n$ for $n\in\{0,...,N\}$, and $\calc_\Phi$, but independent of $f$, $r_1$, $r_2$, or $\tau$.
\end{proposition}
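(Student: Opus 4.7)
The plan is to reduce to a straight-pipe computation via change of variables, show that the intersection of the two deformed pipe supports is concentrated on a thickened lower-dimensional set, and then separate the slowly varying factor $f$ from the oscillatory product via Lemma~\ref{lem:Lp:independence}. First, fix $t$ in the common time interval and apply the volume-preserving change of variables $y = \Phi_1(x,t)$, whose Jacobian is one by incompressibility. Under this change, $\WW^1$ becomes the straight Mikado pipe $\WW^1_0(y) := \WW^1_{\lambda_{q+1},r_2,\xi_1}(y)$, while $\WW^2$ becomes $\WW^2_0(\Psi(y,t))$ where $\Psi := \Phi_2 \circ \Phi_1^{-1}$. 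The estimates \eqref{eq:deformation:bounds:decoupling} yield $|\nabla_y\Psi - \Id| \lesssim \ell$ and $\|\nabla_y^n\Psi\|_{L^\infty} \lesssim (\lambda_{q+1}r_1)^{n-1}$ for $1 \leq n \leq \max(m_1,m_2)$, while $\tilde f := f\circ\Phi_1^{-1}$ inherits the slow-frequency estimates \eqref{e:derivatives:on:f}.

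Next I would establish that the straightened axes remain transversal. The tangent direction of the axes of $\WW^2_0\circ\Psi$ at $y$ is $(\nabla\Psi(y))^{-1}\xi_2$, differing from $\xi_2$ by $O(\ell)$, so together with \eqref{e:inner:product:control} and $\ell^{\sfrac{1}{3}} \leq \epsilon_0/2$,
\[
\left|\langle \xi_1,\, (\nabla\Psi)^{-1}\xi_2\rangle\right| \leq 1 - \tfrac{\epsilon_0}{2}.
\]
Applying Lemma~\ref{lem:axis:control} to both $\WW^1$ and $\WW^2$, the supports of $\WW^1_0$ and $\WW^2_0\circ\Psi$ are $\lambda_{q+1}^{-1}$-thickenings of approximately periodic families of nearly-straight axes. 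By the transversality just established, inside each periodicity cell of side $(\lambda_{q+1}r_2)^{-1}$ the two axis families intersect in only $O(1)$ points, and the thickened intersection has volume $O(\lambda_{q+1}^{-3})$, i.e.\ volume fraction $O(r_2^3)$ per cell. Combined with the pipe normalization $\|\nabla^{m_i}\WW^i_0\|_{L^\infty} \lesssim r_2^{-1}\lambda_{q+1}^{m_i}$ from Proposition~\ref{pipeconstruction}, this yields the per-cell bound
\[
\left\|\nabla^{m_1}\WW^1_0 \otimes \nabla^{m_2}(\WW^2_0\circ\Psi)\right\|_{L^1(\text{cell})}\cdot(\lambda_{q+1}r_2)^{3} \lesssim r_2\,\lambda_{q+1}^{m_1+m_2}.
\]

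Finally I would apply Lemma~\ref{lem:Lp:independence} with $p=1$, slow scale $\lambda = \lambda_{q+1}r_1$, and fast scale $\mu = \lambda_{q+1}r_2$, treating $\tilde f$ as the $\calc_f$-bounded slow function and the product $\nabla^{m_1}\WW^1_0\otimes\nabla^{m_2}(\WW^2_0\circ\Psi)$ as the fast periodic factor; hypothesis \eqref{e:r:decoupling} is precisely the condition \eqref{eq:Lp:independence:assumption} required, and \eqref{e:weak:decoupling:conclusion} then follows. The \emph{main obstacle} is that this fast product is not exactly $(\lambda_{q+1}r_2)^{-1}$-periodic: composition with the non-trivial $\Psi$ introduces spatial variations at the slow scale $(\lambda_{q+1}r_1)^{-1}$. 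The resolution is to partition $\Phi_1(\Omega(t))$ into cubes of side $(\lambda_{q+1}r_1)^{-1}$ on which $\Psi$ is close to an affine map (using the higher-derivative estimates on $\Psi$), apply the decoupling lemma on each such cube, and sum. Transversality, and hence the intersection-volume estimate, is stable under $\Psi$ uniformly on the short time interval $\tau = \ell^3$ precisely because \eqref{e:axis:derivative:bounds} controls $\nabla v$, ensuring $\Phi_1$ and $\Phi_2$ remain near the identity throughout.
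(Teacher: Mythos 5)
Your first stage — straightening via $\Phi_1$, reducing to $\WW^1_0$ straight and $\WW^2_0\circ\Psi$ deformed, establishing transversality of the straightened axes, localizing the intersection to a thickened point set of measure $\lesssim\lambda_{q+1}^{-3}$ per periodicity cell, and the resulting per-cell $L^1$ bound — is correct and matches the geometric content of the paper's argument (which performs the analogous localization, producing a family of disjoint balls $B_i$ of radius $\sim(\lambda_{q+1}r_2)^{-1}$ with $|I(t_0)\cap B_i|\lesssim\lambda_{q+1}^{-3}$). The gap is in the final step, where you invoke Lemma~\ref{lem:Lp:independence} with the product $\nabla^{m_1}\WW^1_0\otimes\nabla^{m_2}(\WW^2_0\circ\Psi)$ as the fast ``periodic'' factor. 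That lemma requires its fast factor to be genuinely $(\T/\mu)^3$-periodic, and this product is not. Your proposed repair — partitioning into cubes on which $\Psi$ is approximately affine — does not resolve it: even for an \emph{exactly} affine $\Psi(y)=Ay$ with $A\neq\Id$, the pipe $\WW^2_0\circ\Psi$ is periodic with respect to the lattice $A^{-1}\Lambda$, $\WW^1_0$ with respect to $\Lambda$, and for generic $A$ the product has no common period. Moreover Lemma~\ref{lem:Lp:independence} is a global statement on $\T^3$; it has no per-cube version to invoke, and on a cube of side $(\lambda_{q+1}r_1)^{-1}$ the second-derivative bound $\|\nabla^2\Psi\|_{L^\infty}\lesssim\lambda_{q+1}r_1$ does not make the affine approximation error small compared to the pipe thickness $\lambda_{q+1}^{-1}$ unless one shrinks the cube far below the slow scale, defeating the purpose.

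The paper avoids the issue by not invoking the periodicity-based lemma at all: instead it \emph{re-derives} the decoupling estimate by hand. On each ball $B_i$ it expands $f$ around its mean,
\begin{equation*}
|f(x)| \leq \sum_{m=0}^{\Ndec-1}\bigl(2\rho_{1,2}\lambda_{q+1}r_2\bigr)^{-m}\bigl|\overline{D^m f}_i\bigr| + (\lambda_{q+1}r_2)^{-\Ndec}\|D^{\Ndec}f\|_{L^\infty},
\end{equation*}
then multiplies by the pipe product and sums over $i$, using only $|I(t_0)\cap B_i|\lesssim\lambda_{q+1}^{-3}$, the $L^\infty$ bounds on the pipes, and the ball count $\sim(\lambda_{q+1}r_2)^3$; the geometric confinement of the intersection on the $B_i$ plays the role that periodicity plays in the standard decoupling lemma. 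Propagation to all $t$ is then immediate because the $B_i$ and the intersection set are transported by the incompressible flow. Your final sentence gestures at this stability but without the ball/Taylor structure one cannot close the argument, so the proposal as written does not constitute a proof.
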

\begin{remark}\label{rem:2d:one}{{We note that the entire Proposition applies as long as the vector directions $\xi_1$ and $\xi_2$ are minimally orthogonal in the sense of \eqref{e:inner:product:control}.  Furthermore, the scaling in two dimensions of \eqref{e:weak:decoupling:conclusion} is identical, since $L^2$-normalized objects have magnitude $r_2^{-\sfrac{1}{2}}$ in $L^\infty$, and the area of the support of an intersection of two orthogonal intermittent pipe flows is $r_2^2$. The proof goes through \emph{mutatis mutandis} after making these slight adjustments, and we omit the details. In the case of the 2D Euler-$\alpha$ equations, Mikado flows may not be taken to be disjoint. This lemma allows Theorem~\ref{thm:main:rough} to apply for the 2D Euler-$\alpha$ equations as well, since the all intersection terms (not just the ones arising from pipes originating from different Lagrangian coordinate systems) may be estimated using this Proposition.}}
\end{remark}
\begin{proof}[Proof of Proposition~\ref{prop:weak:decoupling}]
Define the set of intersection points $I(t)$ at time $t$ for the flowed pipes $\WW^1$ and $\WW^2$ by
\begin{equation}\label{e:It:definition}
    I(t) := \{ x: x\in \supp \WW^1(t) \cap \supp \WW^2(t) \} \, .
\end{equation}
We will show that at time $t_0$, 
\begin{equation}\label{e:It:containment}
    I(t_0) \subset \bigcup_i B_i \, ,
\end{equation}
where the $B_i$'s are pairwise disjoint balls of radius $\calc(\xi_1,\xi_2)\left(\lambda_{q+1}r_2\right)^{-1}$ for some dimensional constant {$\calc(\xi_1,\xi_2)$.}  Furthermore, we will show that
\begin{equation}\label{e:It:proportion}
    \left| I(t) \cap B_i \right| {\lesssim}_{\epsilon_0, \calc(\xi_1,\xi_2)} \lambda_{q+1}^{-3}
\end{equation}
for a universal constant which we emphasize depends only on $\epsilon_0$ and $\calc(\xi_1,\xi_2)$. After proving this, we will be able to follow the proof of the usual decoupling lemma to prove that the desired $L^1$ estimate holds at $t_0$. Since the intersection points are transported by the incompressible velocity field $v$, the same decoupling estimate will hold for all times. One could equivalently rerun the argument for all $t$ for which $\Omega_1(t)\cap\Omega_2(t) \neq \emptyset$.

Let $x\in I(t_0)$.  Then $x\in \supp \WW^1(t_0)\cap \supp \WW^2(t_0)$, and by \eqref{e:axis:union} from Lemma~\ref{lem:axis:control}, there exist $x_1,x_2$ belonging to flowed axes $A_{i_1}(t_0)$, $A_{i_2}(t_0)$, respectively, where we haved mimicked the notation from \eqref{lem:axis:control} to define the flowed axes for $\WW^1$ and $\WW^2$, such that
\begin{equation}\label{e:weak:decoupling:1}
    |x_1-x| \lesssim \lambda_{q+1}^{-1}, \quad |x_2-x| \lesssim \lambda_{q+1}^{-1} \, .
\end{equation}
The implicit constants in the above inequality depend only on $\calc_{\xi_1}$ and $\calc_{\xi_2}$ but nothing else. By \eqref{e:axis:periodicity:1} from Lemma~\ref{lem:axis:control} and the shared periodicity of $\WW^1$ and $\WW^2$ to scale $(\lambda_{q+1}r_2)^{-1}$, $A_{i_1}(t_0)$ and $A_{i_2}(t_0)$ are the only axes of $\WW^1(t)$ and $\WW^2(t)$, respectively, which are closer than a dimensional constant $\rho_{1,2}^{-1}$ multiplied by $(\lambda_{q+1}r_2)^{-1}$ to any of the points $x$, $x_1$, or $x_2$.  We therefore define $B_{x}$ by
\begin{equation}\label{e:decoupling:Bx}
    B_x := \left\{ y\in\mathbb{T}^3: |x-y| \leq \left(\rho_{1,2} \lambda_{q+1}r_2\right)^{-1} \right\}.
\end{equation}
We emphasize that $\rho_{1,2}$ depends only on $\xi_1$ and $\xi_2$, or equivalently on the sets $\mathcal{K}_n$ from Proposition~\ref{lem:linear:algebra}.  We also note that after setting $\calc_\xi=\max\{\calc_{\xi_1},\calc_{\xi_2}\}$, $I(t_0) \cap B_x$ only contains points which are within $\calc_{\xi}\lambda_{q+1}^{-1}$ of either $A_{i_1}(t_0)$ or $A_{i_2}(t_0)$ by \eqref{e:axis:union}.

We will now prove that 
\begin{equation}\label{e:decoupling:I:Bx}
\left| I(t_0) \cap B_x \right| \lesssim_{\epsilon_0} \lambda_{q+1}^{-3} \, .  
\end{equation}
Towards this end, consider the cones $C^j$ defined for $j=1,2$ by
\begin{equation}\label{e:decoupling:cone}
    C^j := \{ x\in\mathbb{T}^3 : x=x_j+r\xi \, \textnormal{ for } \,  |\xi-\xi_j| \leq \ell^{\sfrac{1}{3}}, r\in\mathbb{R} \} \, .
\end{equation}
By \eqref{e:axis:variation}, these cones contain the axes $A_{i_1}(t_0)$ and $A_{i_2}(t_0)$. Let 
\begin{equation}\label{e:decoupling:extended:cone}
    \tilde{C}^j:= \{ x\in \mathbb{T}^3 : |x-C^j| \leq \calc_\xi\lambda_{q+1}^{-1} \}.
\end{equation}
By \eqref{e:axis:union} and the previous observation that only $A_{i_1}(t_0)$ and $A_{i_2}(t_0)$ have non-empty intersection with $B_x$, $\tilde{C}^j$ contains the support of $\WW^j\cap B_x$ for $j=1,2$. Thus to provide an upper bound on the measure of $I(t_0)\cap B_x$, it suffices to estimate the measure of $\tilde{C}^1\cap \tilde{C}^2$. 

Let $x\in \tilde{C}^1\cap \tilde{C}^2$. Then for $j=1,2$, there exist $r_j\in\mathbb{R}$, $\xi_{j,x}\in\mathbb{S}^2$ such that $|\xi_j-\xi_{j,x}|<\ell^{\sfrac{1}{3}}$, and $y_j$ such that $|y_j-x_j|\leq \calc_\xi\lambda_{q+1}^{-1}$, with
\begin{equation}\label{e:decoupling:equality}
 x= y_1+r_1\xi_{1,x} = y_2 + r_2\xi_{2,x}\,.
\end{equation}
By \eqref{e:weak:decoupling:1}, we have that then
$$  |y_1-y_2| \lesssim_{\calc_\xi} \lambda_{q+1}^{-1} \, .  $$
As a consequence of \eqref{e:inner:product:control},
\begin{align}
    \lambda_{q+1}^{-2} &\gtrsim |r_1\xi_{1,x}-r_2\xi_{2,x}|^2 \nonumber\\
    &= r_1^2+r_2^2-2r_1r_2\langle \xi_{1,x},\xi_{2,x}\rangle\nonumber\\
    &\geq r_1^2+r_2^2 - \left(1-\frac{\epsilon_0}{2}\right)\left( r_1^2+r_2^2 \right)\nonumber\\
    &\geq \frac{\epsilon_0}{2}\left( r_1^2+r_2^2 \right)\, , \notag
\end{align}
which is only possible when 
$$  r_1^2+r_2^2 \lesssim \frac{1}{\epsilon_0}\lambda_{q+1}^{-2} \, .  $$
We can then brutally bound the volume of $\tilde{C}^1\cap\tilde{C}^2$ by the volume of the union of two balls centered at $x_j$ whose radius is twice the maximum value of $r_j$, yielding 
\begin{equation}\label{e:decoupling:proportion}
    \left|\tilde{C}^1\cap\tilde{C}^2\right| \lesssim_{\epsilon_0,\calc_\xi} \lambda_{q+1}^{-3}.
\end{equation}
Since $I(t_0)\cap B_x\subset \tilde{C}^1\cap\tilde{C}^2$, we have verified \eqref{e:It:proportion} for $B_x$.  Repeating the above steps by finding $x\in I(t_0)\setminus B_x$ and so forth verifies \eqref{e:It:containment}.

To demonstrate the desired decoupling estimate at $t_0$, consider
\begin{align}
    \int_{\Omega(t_0)} \left| f   \nabla^{m_1} \WW^1  \otimes \nabla^{m_2}\WW^2 \right| \,dx &= \int_{I(t_0)} \left| f \nabla^{m_1}\WW^1 \otimes \nabla^{m_2}\WW^2 \right| \,dx \nonumber\\
    &\leq \sum_{i} \int_{B_i} \left| f \nabla^{m_1} \WW^1 \otimes \nabla^{m_2} \WW^2 \right| \,dx\,. \label{e:decoupling:end:1}
\end{align}
Letting $\bar{h}_i$ denote the mean of any function $h$ on $B_i$, for $x\in B_i$, we can write that
\begin{align}
    |f(x)| &= |(\bar{f}_i + f(x) - \bar{f}_i)| \nonumber\\
    &\leq |\bar{f}_i| + (2\rho_{1,2}\lambda_{q+1}r_2)^{-1} \sup_{B_i} | D f | \notag
\end{align}
since the diameter of $B_i$ is $(2\rho_{1,2}\lambda_{q+1}r_2)^{-1}$. Iterating $\Ndec$ times, we obtain the pointwise estimate for $x\in B_i$
\begin{equation}
|f(x)| \leq \sum_{m=0}^{\Ndec-1} \left(2\rho_{1,2}\lambda_{q+1}r_2\right)^{-m} \left|\overline{D^{m }f}_i\right| + \left(\lambda_{q+1}r_2\right)^{-\Ndec}\left\| D^{\Ndec } f \right\|_{L^\infty(\Omega(t_0))}\,. \notag
\end{equation}
Multiplying by $\nabla^{m_1}\WW^1 \otimes \nabla^{m_2}\WW^2$, integrating, and summing over $i$, \eqref{e:decoupling:end:1} is less than or equal to
\begin{align}
    \sum_i \int_{B_i} &|\nabla^{m_1}\WW^1\otimes \nabla^{m_2}\WW^2|\left( \sum_{m=0}^{\Ndec-1} \left(2\rho_{1,2}\lambda_{q+1}r_2\right)^{-m} |\overline{D^{m} f}_i| \right)\notag \\
    &\qquad + \left(2\rho_{1,2}\lambda_{q+1}r_2\right)^{-\Ndec}\left\| D^{\Ndec}f \right\|_{L^\infty(\Omega(t_0))} \left\|\nabla^{m_1} \WW^1 \otimes \nabla^{m_2} \WW^2 \right\|_{L^1(\Omega(t_0))}\nonumber\\
    &\leq \sum_i  \int_{B_i} | \nabla^{m_1}\WW^1 \otimes \nabla^{m_2}\WW^2| \left( \sum_{m=0}^{\Ndec-1} \left(2\rho_{1,2}\lambda_{q+1}r_2\right)^{-m} \frac{1}{|B_i|} \left\|D^{m} f_i\right\|_{L^1(B_i)} \right) \nonumber\\
    &\qquad + \left(2\rho_{1,2}\lambda_{q+1}r_2\right)^{-\Ndec}\left\| D^{\Ndec } f \right\|_{L^\infty(\Omega(t_0))} \left\| \nabla^{m_1} \WW^1\otimes \nabla^{m_2} \WW^2 \right\|_{L^1(\Omega(t_0))}  . \label{eq:integrating:1}
\end{align}
We note that by \eqref{e:pipe:estimates:2}, \eqref{eq:deformation:bounds:decoupling}, repeated application of the chain rule, and the inequality $\lambda_{q + 1} \gg \lambda_{q+1} r_1$, we have 
\begin{equation}
     \| \nabla^{m_i} \WW^i \|_{L^{\infty}} \lesssim r_2^{-1}\lambda_{q+1}^{m_i} \, . \notag
\end{equation}
Since  
$$  \left| I(t_0) \cap B_i \right| \lesssim \lambda_{q+1}^{-3} \, ,  $$
we have
\begin{equation}
    \int_{B_i} | \nabla^{m_1}\WW^1 \otimes \nabla^{m_2}\WW^2 | \leq \left| I(t_0) \cap B_i \right| \left\|\nabla^{m_1} \WW^1  \otimes \nabla^{m_2}\WW^2 \right\|_{L^\infty(B_i)}\lesssim  r_2^{-2}\lambda_{q+1}^{m_1 + m_2 -3}\,. \label{e:decoupling:ballL1}
\end{equation}
Using the fact that there exist at most $\approx\left(\lambda_{q+1}r_2\right)^3$ disjoint balls of radius $(\rho_{1,2}\lambda_{q+1}r_2)^{-1}$ in $\mathbb{T}^3$ (by volume constraints), we have that using Sobolev embedding, \eqref{e:derivatives:on:f}, \eqref{e:r:decoupling}, and \eqref{e:decoupling:ballL1}, we can estimate the second term in \eqref{eq:integrating:1} by
\begin{align}
    \left(2\rho_{1,2}\lambda_{q+1}r_2\right)^{-\Ndec}&\left\| D^{\Ndec } f \right\|_{L^\infty(\Omega(t_0))} \left\| \nabla^{m_1} \WW^1  \otimes \nabla^{m_2}\WW^2 \right\|_{L^1(\Omega(t_0))}\nonumber\\
    &\lesssim \left( \lambda_{q+1}r_2 \right)^{-\Ndec} \cdot\mathcal{C}_f\cdot \left(\lambda_{q+1}r_1\right)^{\Ndec +4} \| \nabla^{m_1} \WW^1  \otimes \nabla^{m_2}\WW^2 \|_{L^1\left(\Omega(t_0)\right)}\nonumber\\
    &\leq C(\epsilon_0,\mathcal{K})\cdot {\mathcal{C}_f}\cdot \left(\lambda_{q+1}r_2\right)^3 \lambda_{q+1}^{m_1 + m_2-3} r_2^{-2}\nonumber\\
    &\leq C(\epsilon_0,\mathcal{K})\cdot {\mathcal{C}_f}\cdot r_2  \lambda_{q+1}^{m_1 + m_2 } \, .\nonumber
\end{align}
The constant in the last line depends only on $\epsilon_0$, the sets of vector directions $\mathcal{K}_n$, and geometric quantities. Thus it remains to estimate the first term. We may write that
\begin{align}
\sum_i \int_{B_i} &|\nabla^{m_1}\WW^1 \otimes \nabla^{m_2}\WW^2| \left( \sum_{m=0}^{\Ndec-1} \left(2\rho_{1,2}\lambda_{q+1}r_2\right)^{-m} \frac{1}{|B_i|} \left\|D^{m } f \right\|_{L^1(B_i)} \right)    \nonumber\\
&\leq C(\epsilon_0,\mathcal{K}) \sum_{m=0}^{\Ndec-1} \lambda_{q+1}^{m_1 + m_2 -3} r_2^{-2}\left(\lambda_{q+1}r_2\right)^3 \left(\frac{\lambda_{q+1}r_1}{\lambda_{q+1}r_2}\right)^m  {\mathcal{C}_f}\nonumber\\
&\leq C(\mathcal{K},\epsilon_0) \lambda_{q+1}^{m_1 + m_2} r_2  {\mathcal{C}_f} \nonumber
\end{align}
using again \eqref{e:derivatives:on:f},  \eqref{e:r:decoupling}, \eqref{e:decoupling:Bx}, and \eqref{e:decoupling:ballL1}. Summing estimates for the two terms proves \eqref{e:weak:decoupling:conclusion} at the fixed time $t_0$. 

To prove the estimate for all times $t$, first notice that by the incompressibility of the flow, the $B_i$'s remain disjoint upon advection.  Furthermore, since $\WW^1$ and $\WW^2$ are constant along the characteristics of $v$, the $L^1$ norm of their product on each $B_i$ remains constant as well.  Thus, the only part of the argument which requires adjustment is the estimate on the diameters of the flowed balls $B_i$, which is easily seen to increase by a geometric factor on the timescale in question after using the results of Lemma~\ref{lem:axis:control}. 
\end{proof}

\subsection{Inverse Divergence}
The inverse divergence we use in this paper must be flexible enough to handle both intermittency and composition with diffeomorphisms.  For these purposes, we use an inverse divergence predicated on ``differentation by parts" rather than Fourier multipliers. The inverse divergence operator from \cite{bmnv21} is built upon this principle; in fact, the estimates proven there are much more detailed than anything necessary in this paper.  However, we reproduce identically the ``iterative step" from \cite{bmnv21} but state a significantly streamlined and simplified version of the estimates satisfied by the inverse divergence operator.  For the reader's convenience, we include an outline of the proof of the claimed estimates for the inverse divergence operator.

\begin{proposition}[Inverse divergence iteration step]
\label{prop:Celtics:suck}
Let $n\geq 2$ and fix two zero-mean $\T^n$-periodic functions $\varrho$ and $\vartheta$ with $\varrho  =  \Delta \vartheta $. Let $\Phi$ be a volume preserving transformation of $\T^n$ such that $\norm{\nabla \Phi - \Id}_{L^\infty(\T^n)} \leq \sfrac 12$. Define the matrix $A = (\nabla \Phi)^{-1}$. Given a smooth vector field $G^i$, we have
\begin{align}
G^i  \varrho\circ \Phi = \partial_m \RR^{im} + \partial_i P + E^i \, ,
\label{eq:Celtics:suck:total}
\end{align}
where the traceless symmetric stress $R^{im}$ is given by
\begin{align}
\RR^{im}
&= \left(  G^i A^m_\ell  + G^m A^i_\ell  -A^i_k A^m_k   G^p \partial_p \Phi^\ell \right) (\partial_\ell \vartheta) \circ \Phi  -  P \delta_{im}
\,,
\label{eq:Celtics:suck:stress}
\end{align}
the pressure term is given by 
\begin{align}
P
&= \left(2   G^m A^m_\ell   -A^m_k A^m_k   G^p \partial_p \Phi^\ell \right) (\partial_\ell \vartheta) \circ \Phi   \, ,
\label{eq:Celtics:suck:pressure}
\end{align}
and the error term $E^i$ is given by 
\begin{align}
E^i
&=  \left(  \partial_m \left(G^p A^i_k A^m_k - G^m A^i_k A^p_k\right)  \partial_p \Phi^\ell
- \partial_m G^i A^m_\ell  \right) (\partial_\ell \vartheta) \circ \Phi 
\, .
\label{eq:Celtics:suck:error}
\end{align}
\end{proposition}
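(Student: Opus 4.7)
The plan is to verify the identity \eqref{eq:Celtics:suck:total} by a direct computation that exploits two structural ingredients: the chain rule for composition with $\Phi$, and the Piola identity $\partial_m A^m_k = 0$ coming from the fact that $\Phi$ is volume preserving.

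First, I would put $\varrho\circ\Phi$ into divergence form. Since $A^m_k\partial_m\Phi^j = \delta^j_k$, the chain rule gives
\[
A^m_k\,\partial_m\bigl[(\partial_\ell\vartheta)\circ\Phi\bigr] = (\partial_k\partial_\ell\vartheta)\circ\Phi\,,
\]
and contracting $k=\ell$ yields $\varrho\circ\Phi = A^m_k\,\partial_m[(\partial_k\vartheta)\circ\Phi]$. Thus the Laplacian can be moved off $\vartheta$ at the cost of a factor of $A$ and an extra outer derivative.

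Second, I would observe that the $-P\,\delta^{im}$ term in \eqref{eq:Celtics:suck:stress} is purely cosmetic in the identity \eqref{eq:Celtics:suck:total}: since $\delta^{im}$ is constant, $\partial_m(-P\delta^{im}) + \partial_i P = 0$, and the precise scalar chosen for $P$ only affects whether $\RR^{im}$ is exactly traceless or traceless up to a multiple of $\delta^{im}$. Consequently \eqref{eq:Celtics:suck:total} reduces to the identity
\[
G^i\,\varrho\circ\Phi = \partial_m\bigl[M^{im}_\ell\,(\partial_\ell\vartheta)\circ\Phi\bigr] + E^i\,, \qquad M^{im}_\ell := G^iA^m_\ell + G^mA^i_\ell - A^i_kA^m_kG^p\partial_p\Phi^\ell\,,
\]
with $E^i$ as in \eqref{eq:Celtics:suck:error}.

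Third, I would apply the Leibniz rule to the divergence and split according to whether the derivative lands on the coefficient $M^{im}_\ell$ or on $(\partial_\ell\vartheta)\circ\Phi$. The key computational step is to show that
\[
M^{im}_\ell\,\partial_m\bigl[(\partial_\ell\vartheta)\circ\Phi\bigr] = G^i\,\varrho\circ\Phi\,.
\]
The first summand of $M^{im}_\ell$, namely $G^iA^m_\ell$, produces exactly $G^i\,\varrho\circ\Phi$ via the identity of step one. For the third summand $-A^i_kA^m_kG^p\partial_p\Phi^\ell$, applying $A^m_k\partial_m\Phi^j = \delta^j_k$ and then rewriting $\partial_p\Phi^\ell\cdot(\partial_j\partial_\ell\vartheta)\circ\Phi = \partial_p[(\partial_j\vartheta)\circ\Phi]$ reduces its contribution to $-A^i_jG^p\partial_p[(\partial_j\vartheta)\circ\Phi]$, which after relabeling is the exact negative of the contribution of the symmetrizing second summand $G^mA^i_\ell\partial_m[(\partial_\ell\vartheta)\circ\Phi]$. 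Rearranging then forces $E^i = -(\partial_m M^{im}_\ell)(\partial_\ell\vartheta)\circ\Phi$, and expanding $\partial_m M^{im}_\ell$ by the Leibniz rule while simplifying with the Piola identity $\partial_m A^m_k = 0$ and the relation $\partial_m A^p_k\,\partial_p\Phi^\ell = -A^p_k\partial_m\partial_p\Phi^\ell$ (obtained by differentiating $A^p_k\partial_p\Phi^\ell = \delta^\ell_k$) matches the explicit formula \eqref{eq:Celtics:suck:error}.

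The main obstacle, and the step that justifies the particular form of $M^{im}_\ell$, is the cancellation described in step three. The correction $-A^i_kA^m_kG^p\partial_p\Phi^\ell$ is calibrated precisely so that after contraction with $\partial_m[(\partial_\ell\vartheta)\circ\Phi]$ it kills the contribution of the symmetrizing term $G^mA^i_\ell$. This engineering simultaneously enforces symmetry of $\RR^{im}$ in the indices $i,m$ and guarantees that the error $E^i$ contains \emph{no} second derivatives of $\vartheta$, which is exactly the structural property required for the subsequent estimates in the convex integration scheme.
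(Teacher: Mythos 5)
Your proof is correct, and since the paper itself does not prove Proposition~\ref{prop:Celtics:suck} (it is quoted from \cite{bmnv21}), a direct verification is exactly what is needed. Your three-step structure — the chain-rule identity $A^m_\ell\,\partial_m[(\partial_\ell\vartheta)\circ\Phi]=\varrho\circ\Phi$, the observation that $\partial_m(-P\delta_{im})+\partial_i P=0$ so the pressure drops out of the identity, and the Leibniz split followed by the cancellation of the second and third summands of $M^{im}_\ell:=G^iA^m_\ell+G^mA^i_\ell-A^i_kA^m_kG^p\partial_p\Phi^\ell$ against each other — is the natural route, and the remaining bookkeeping via the Piola identity $\partial_m A^m_k=0$ and $(\partial_m A^p_k)\partial_p\Phi^\ell=-A^p_k\partial_m\partial_p\Phi^\ell$ (plus the symmetry of $\partial_m\partial_p\Phi^\ell$ to reconcile the $G^m\cdots\partial_m\partial_p\Phi^\ell$ versus $G^p\cdots\partial_m\partial_p\Phi^\ell$ forms) does produce \eqref{eq:Celtics:suck:error}. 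One additional remark that sharpens your aside about tracelessness: with $P$ as written in \eqref{eq:Celtics:suck:pressure}, the trace of $\RR^{im}$ comes out to $(1-n)P$ rather than $0$, since $M^{ii}_\ell(\partial_\ell\vartheta)\circ\Phi=P$ exactly; to make $\RR$ genuinely traceless one should divide $P$ by $n$ inside \eqref{eq:Celtics:suck:stress}, which, as you correctly note, leaves the identity \eqref{eq:Celtics:suck:total} and all subsequent estimates unchanged.
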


Before defining and collecting estimates on the primary inverse divergence operator, we recall the Fourier-multiplier frequently used in convex integration schemes, for example in \cite{BDLISZ15}.

\begin{proposition}[Fourier-multiplier inverse divergence]\label{prop:fourier:inverse:div}
Let $v:\mathbb{T}^3\rightarrow\mathbb{T}^3$ be a $C^\infty$ vector field. Define
\begin{equation}\label{eq:divR}
    (\mathcal{R}v)^{ij} = \left(-\frac{1}{2}\Delta^{-2} \partial_i \partial_j \partial_k - \frac{1}{2}\Delta^{-1}\partial_k \delta_{ij} + \Delta^{-1}\partial_i \delta_{jk} + \Delta^{-1} \partial_j \delta_{ik} \right) \left(v^k - \fint_{\T^3} v^k \right) \, .
\end{equation}
Then $\Div\left(\mathcal{R}v\right)=v - \fint_{\T^3} v$, and the operator $\nabla\circ\mathcal{R}$ is a singular integral operator which is bounded from $L^p(\T^3)$ to itself for all $1<p<\infty$ (with $p$-dependent bounds).
\end{proposition}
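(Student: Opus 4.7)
The plan is that this proposition amounts to a straightforward Fourier-multiplier calculation, so I would split the argument into verifying the divergence identity and then invoking Calder\'on--Zygmund theory for the derivative bound.

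For the first claim, I would take the divergence directly from the formula \eqref{eq:divR}. Contracting against $\partial_j$ term by term: the first term contributes $-\tfrac12 \Delta^{-1}\partial_i\partial_k(v^k-\fint v^k)$ (using $\partial_j\partial_j = \Delta$); the second contributes $-\tfrac12 \Delta^{-1}\partial_i\partial_k(v^k-\fint v^k)$ via $\delta_{ij}\partial_j = \partial_i$; the third contributes $+\Delta^{-1}\partial_i\partial_k(v^k-\fint v^k)$ via $\delta_{jk}\partial_j = \partial_k$; the fourth gives $\Delta^{-1}\Delta\,\delta_{ik}(v^k-\fint v^k) = v^i - \fint v^i$. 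The first three terms cancel, leaving $\Div(\mathcal{R}v)^i = v^i - \fint v^i$. (I would also note that the formula is manifestly symmetric under $i\leftrightarrow j$, which is why it produces a symmetric tensor as required in the convex integration scheme.)

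For the second claim, I would observe that each entry of $\nabla \mathcal{R}$ is a Fourier multiplier of order zero acting on the mean-zero function $v^k - \fint v^k$. Explicitly, $\partial_\ell (\mathcal{R}v)^{ij}$ is a linear combination of the symbols
\begin{equation}\notag
\frac{\xi_\ell\xi_i\xi_j\xi_k}{|\xi|^4},\qquad \frac{\xi_\ell\xi_k}{|\xi|^2}\delta_{ij},\qquad \frac{\xi_\ell\xi_i}{|\xi|^2}\delta_{jk},\qquad \frac{\xi_\ell\xi_j}{|\xi|^2}\delta_{ik},
\end{equation}
applied to $v^k-\fint v^k$. Each of these is a Calder\'on--Zygmund convolution operator, and subtraction of the mean is precisely what makes them well-defined on $\T^3$. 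By the classical Calder\'on--Zygmund theorem (equivalently, the Mihlin--H\"ormander multiplier theorem adapted to the torus), these operators are bounded from $L^p(\T^3)$ to $L^p(\T^3)$ for every $1 < p < \infty$, with norm depending on $p$.

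There is essentially no obstacle here: the only mild subtlety is the need to subtract the mean from $v$ so that the inverse Laplacians are well-defined, which is already built into the definition \eqref{eq:divR}. This is the reason the hypothesis in the convex integration scheme requires the inputs to $\mathcal{R}$ to have zero mean, as noted after the definition of $\RR_0$ in the proof of Theorem~\ref{thm:middle:third}.
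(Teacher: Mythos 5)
Your proof is correct, and it is essentially the standard argument; the paper itself does not give a proof of this proposition but simply cites it as the Fourier-multiplier inverse divergence used routinely in convex integration schemes (referring to \cite{BDLISZ15}). Your direct index computation of the divergence and the appeal to Calder\'on--Zygmund/Mihlin--H\"ormander theory for the zero-order symbol of $\nabla\circ\mathcal{R}$ is exactly what is implicitly being invoked.
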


With the iterative step and nonlocal inverse divergence operator in hand, we can now construct the full inverse divergence operator and record and prove the estimates satisfied by the output. For the purposes of the statement of this Proposition, we use the notation
\begin{equation}\label{eq:MM:def}
    \MM{n,N,\lambda,\Lambda} = \lambda^{\min\{n,N\}} \Lambda^{\max\{n-N,0\}} \, .
\end{equation}

\begin{proposition}[Inverse divergence with estimates]
\label{prop:intermittent:inverse:div}
Fix an incompressible vector field $v(t,x):\mathbb{R}\times\T^n\rightarrow\R^n$ for $n=2,3$ and denote its material derivative by $D_t = \partial_t + v\cdot\nabla$. 
Let $G:\R\times\T^n\rightarrow\R^n$ be a vector field, and assume there exists a constant $\calc_{G} > 0$, $\lambda\geq 1$, and a large integer $\dpot$ such that
\begin{align}
\norm{D^N G}_{L^{1}(\T^n)}\lesssim \calc_{G} \lambda^N 
\label{eq:inverse:div:DN:G}
\end{align}
{for all $N\leq 2\dpot$ with implicit constants which may depend on $N$ but not $q$.} Let $\Phi$ be a volume preserving transformation of $\T^n$ such that 
\begin{equation}\label{eq:diff:bounds}
D_t \Phi = 0 \,
\qquad \mbox{and} \qquad
\norm{\nabla \Phi - \Id}_{L^\infty(\supp G)} \leq \sfrac 12 \,.
\end{equation} 
Denote by $\Phi^{-1}$ the inverse of the flow $\Phi$,  which is the identity at a time slice which intersects the support of $G$.
Assume that the velocity field $v$ and the flow functions $\Phi$ and $\Phi^{-1}$ satisfy the bounds
\begin{align}
\norm{D^{N+1}   \Phi}_{L^{\infty}(\supp G)} + \norm{D^{N+1}   \Phi^{-1}}_{L^{\infty}(\supp G)} + \left\| D^N \left(\nabla\Phi\right)^{-1} \right\|_{L^\infty(\supp G)}
&\les \lambda^{N} \, 
\label{eq:DDpsi}
\end{align}
for all $N\leq 2\dpot$, where the implicit constants may again depend on $N$ but not $q$. Lastly, let $\varrho,\vartheta \colon \T^n \to \R$ be two zero mean functions with  the following properties:
\begin{enumerate}
\item\label{item:inverse:i} There exists a parameter $\zeta\geq 1$ such that $\varrho (x) = \zeta^{-2\dpot } \Delta^\dpot \vartheta(x)$.
\item\label{item:inverse:ii} There exists a parameter $\mu\geq 1$ such that $\varrho$ and $\vartheta$ are $(\sfrac{\T}{\mu})^n$-periodic.
\item\label{item:inverse:iii} There exist parameters $\Lambda\geq \zeta$ and $\calc_{*} \geq 1$ such that 
\begin{align}
\norm{D^N \varrho}_{L^1} \les \calc_{*} \Lambda^{N}
\qquad \mbox{and} \qquad
\norm{D^N \vartheta}_{L^1} \les \calc_{*} \MM{N,2\dpot,\zeta,\Lambda} 
\label{eq:DN:Mikado:density}
\end{align} 
for all {$N \leq 4\dpot$}
except for the case $N = 2\dpot$ when the Calder\'on-Zygmund inequality fails. In this exceptional case, the second inequality in \eqref{eq:DN:Mikado:density} is allowed to be weaker by a factor of $\Lambda^\alpha$, for an arbitrary $\alpha \in (0,1]$; that is, we only require that $\norm{D^{2\dpot} \vartheta}_{L^1} \les \calc_{*} \Lambda^\alpha\zeta^{2\dpot} $.
\end{enumerate}

If the parameters satisfy
\begin{align}\label{eq:exchange}
{\left(\frac{\lambda}{\zeta}\right)^{\dpot}\lambda^4 \leq \zeta^{-1} \, ,}
\end{align}
then we have that 
\begin{align}
G \; \varrho\circ \Phi  &=  \Div \RR + \nabla P + \fint_{\mathbb{T}^3} G \varrho\circ \Phi \, . \label{eq:inverse:div}
\end{align}
Furthermore, the traceless symmetric stress $\RR$ and pressure $P$ satisfy the bound
\begin{align}
\norm{\RR}_{L^{1}} + \norm{P}_{L^{1}}
 &\les \calc_{G} {\lambda^4} \calc_{*}   \zeta^{-1} \, ,
\label{eq:inverse:div:stress:1}
\end{align}
with an implicit constant which is independent of $G$, $\varrho$, or $\Phi$.

\end{proposition}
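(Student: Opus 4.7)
The plan is to iterate Proposition~\ref{prop:Celtics:suck} a total of $\dpot$ times, then handle the remaining error term with the Fourier-multiplier inverse divergence of Proposition~\ref{prop:fourier:inverse:div}. First I would introduce a cascade of auxiliary potentials $\vartheta_k := \zeta^{-2k}\Delta^k \vartheta$ for $k=0,1,\dots,\dpot$, so that $\vartheta_0=\vartheta$, $\vartheta_\dpot = \varrho$, and $\Delta(\zeta^{-2}\vartheta_{k-1})=\vartheta_k$. From \eqref{eq:DN:Mikado:density} and this cascade, $\|D^N \vartheta_k\|_{L^1}\lesssim \calc_* \MM{N,2(\dpot-k),\zeta,\Lambda}$, with the $\Lambda^\alpha$-loss only at the top endpoint. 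Starting from $G\cdot\vartheta_\dpot\circ\Phi$, I would iteratively apply Proposition~\ref{prop:Celtics:suck}. At the $k$-th step, the input takes the form $\tilde G^{(k-1)}\cdot\tilde\varrho^{(k-1)}\circ\Phi$, where $\tilde\varrho^{(k-1)}$ is a derivative of order $k-1$ of $\zeta^{-2(k-1)}\vartheta_{\dpot-k+1}$, arising from the iterated $\partial_\ell$ factors generated by the error expression \eqref{eq:Celtics:suck:error}. Since partial derivatives commute with $\Delta$, the natural potential at this step is $\tilde\vartheta^{(k-1)}$, the analogous derivative of $\zeta^{-2k}\vartheta_{\dpot-k}$, and $\Delta\tilde\vartheta^{(k-1)}=\tilde\varrho^{(k-1)}$ automatically. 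Applying \eqref{eq:Celtics:suck:total}--\eqref{eq:Celtics:suck:error} then produces a symmetric traceless stress $R^{(k)}$, a pressure $\partial_i P^{(k)}$, and an error $E^{(k)}=\tilde G^{(k)}\cdot\tilde\varrho^{(k)}\circ\Phi$, where $\tilde G^{(k)}$ carries one additional derivative on $G$, $A=(\nabla\Phi)^{-1}$, or $\nabla\Phi$ relative to $\tilde G^{(k-1)}$.

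The quantitative estimates at each step use the $L^p$ decoupling Lemma~\ref{lem:Lp:independence} applied to the product of the slow function $\tilde G^{(k-1)}$, controlled inductively via the chain rule together with \eqref{eq:inverse:div:DN:G} and \eqref{eq:DDpsi} by $\|D^N \tilde G^{(k-1)}\|_{L^1}\lesssim \calc_G\lambda^{N+k-1}$, with the fast function $\tilde\vartheta^{(k-1)}\circ\Phi$, whose $L^1$ norm is invariant under the volume-preserving composition guaranteed by \eqref{eq:diff:bounds}. The decoupling hypothesis \eqref{eq:Lp:independence:assumption} is verified using \eqref{eq:exchange} for $\dpot$ large enough relative to the fixed $\Ndec$ of Lemma~\ref{lem:Lp:independence}. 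A bookkeeping computation then gives per-step bounds $\|R^{(k)}\|_{L^1}+\|P^{(k)}\|_{L^1}\lesssim \calc_G\calc_*\lambda^{k-1}\zeta^{-k}$ and $\|E^{(k)}\|_{L^1}\lesssim \calc_G\calc_*(\lambda/\zeta)^k$, which sum as a geometric series in $\lambda/\zeta<1$ to at most $\calc_G\calc_*\zeta^{-1}$.

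The main obstacle will be the terminal error $E^{(\dpot)}$, whose density $\tilde\varrho^{(\dpot)}$ involves $\partial^\dpot \vartheta$ and is no longer of the form $\Delta(\text{something})$ within the hypotheses, so the iteration cannot continue. To convert $E^{(\dpot)}$ into divergence-plus-mean form I would apply the Fourier-multiplier inverse divergence $\mathcal R$ of Proposition~\ref{prop:fourier:inverse:div}. Since $\nabla\mathcal R$ is a Calder\'on--Zygmund operator and fails to be bounded on $L^1$, the estimate must pass through $L^p$ for $p>1$ close to $1$, exploiting the frequency localization of $\tilde\varrho^{(\dpot)}$ near $\zeta$ via Bernstein-type inequalities and invoking the $\Lambda^\alpha$-allowance built into \eqref{eq:DN:Mikado:density}. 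The exchange condition \eqref{eq:exchange} is designed precisely to absorb the resulting polynomial $\lambda^4$-loss from this step, yielding $\|\mathcal R E^{(\dpot)}\|_{L^1}\lesssim \calc_G\calc_*\lambda^4\zeta^{-1}$, which dominates the geometric sum and matches the bound \eqref{eq:inverse:div:stress:1}. Setting $\RR:=\sum_{k=1}^\dpot R^{(k)}+\mathcal R E^{(\dpot)}$ and $P:=\sum_{k=1}^\dpot P^{(k)}$, subtracting any residual trace of the nonlocal correction into $P$, and extracting the mean $\fint_{\T^n} G\,\varrho\circ\Phi$ as required for Proposition~\ref{prop:fourier:inverse:div}, concludes the proof. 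Symmetry and tracelessness of $\RR$ at each iteration are preserved automatically by the explicit form \eqref{eq:Celtics:suck:stress}.
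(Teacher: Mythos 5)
Your overall strategy coincides with the paper's: introduce a cascade of potentials, iterate Proposition~\ref{prop:Celtics:suck} $\dpot$ times to trade ``expensive'' derivatives of $G$ and $\nabla\Phi$ for ``cheap'' derivatives landing on the potential, then clean up the terminal error with the Fourier-multiplier operator $\mathcal R$ and absorb it into $\RR$. The setup of the cascade $\vartheta_k$, the observation that $\partial_\ell$ commutes with $\Delta$ so the inductive structure propagates, and the final assembly $\RR = \sum_k R^{(k)} + \mathcal R E^{(\dpot)}$ are all faithful to the paper's argument.

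Where you diverge is in the quantitative machinery, and here the paper makes a deliberate choice that you reverse. At each iterate you propose to verify $\|R^{(k)}\|_{L^1}$ via the decoupling Lemma~\ref{lem:Lp:independence}, which would give the sharp per-step bound $\calc_G\calc_*\lambda^{k-1}\zeta^{-k}$ with no polynomial loss. The paper instead puts the slow prefactor $\left(G^iA^m_\ell + G^mA^i_\ell - A^i_kA^m_kG^p\partial_p\Phi^\ell\right)$ into $L^\infty$ via the Sobolev embedding $W^{n+1,1}(\T^n)\hookrightarrow L^\infty(\T^n)$, accepting a one-time $\lambda^4$ loss, and pairs it with the $L^1$ norm of the cascaded potential. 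The paper explicitly states this choice: because the final estimate already carries the $\lambda^4$ factor, there is no need to set up decoupling (which, after composing with $\Phi$, would require the change of variables and Jacobian tracking that you gloss over, plus verification of \eqref{eq:Lp:independence:assumption} with a $\mu$ that does not literally appear in \eqref{eq:exchange}). Your sharper per-step bound is thus wasted effort here, since the terminal error forces the $\lambda^4$ anyway. For the terminal error itself you propose an $L^p$ estimate with $p$ close to $1$ together with Bernstein, invoking the $\Lambda^\alpha$ allowance; the paper instead passes to $L^2$ directly (Sobolev embedding on the slow factor, $L^2$ on the potential, $L^2\!\to\! L^2$ boundedness of $\mathcal R$, compactness of $\T^n$ to fall back to $L^1$), which is shorter and avoids the Bernstein argument entirely. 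Also note that the potential is concentrated at frequency $\sim\Lambda$ rather than $\zeta$ (its high-order derivatives cost $\Lambda$), so the phrase ``frequency localization near $\zeta$'' is not quite accurate. None of these differences is a gap in the sense of a wrong conclusion, but they make the argument heavier than the paper's version, and the claim that the decoupling hypothesis ``is verified using \eqref{eq:exchange}'' would need an explicit parameter check rather than an appeal to $\dpot$ being large.
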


\begin{proof}
The proof mirrors that of \cite{bmnv21} in spirit. Since there is no need to propagate any sharp derivative estimates, however, the proof is significantly simpler. In fact there is no need to ensure that decoupling lemmas apply as in \cite{bmnv21}, leading to the $\lambda^4$ Sobolev loss in \eqref{eq:inverse:div:stress:1}. These shortcomings are irrelevant for applications in this paper but could be avoided by appealing to the more sophisticated estimates in \cite{bmnv21}.  In any case, the strategy is to apply Proposition~\ref{prop:Celtics:suck} to produce a sequence of matrices $\RR_{(j)}$ and scalar functions $P_{(j)}$ for {$0\leq j \leq \dpot-1$} which each satisfy the estimates in \eqref{eq:inverse:div:stress:1}.  Every iteration will produce a symmetric tensor $\RR_{(j)}$ and pressure $P_{(j)}$ which are each smaller than $G\varrho\circ\Phi$ by at least $\zeta^{-1}$ and an error term $E_{(j)}$ which has gained one factor of $\zeta^{-1}\lambda$ relative to its predecessor; that is, we repeatedly differentiate by parts to produce a symmetric tensor and an error term which has ``exchanged expensive derivatives for cheap derivatives." We may halt this exchanging process after reaching the threshold indicated in \eqref{eq:exchange}, at which point we simply apply the usual Fourier multiplier inverse divergence and use its trivial $L^2(\mathbb{T}^n)\rightarrow L^2(\mathbb{T}^n)$ bound.

\newcommand{\varrhozero}{\varrho_{(0)}}
\newcommand{\varrhok}{\varrho_{(k)}}
\newcommand{\varrhokminus}{\varrho_{(k-1)}}

To set notation let us define 
\begin{equation}\label{eq:rhos}
    \varrhozero = \varrho \, , \qquad \varrhok = \left(\zeta^{-2}\Delta\right)^{\dpot-k} \vartheta \quad \forall
    1 \leq k \leq \dpot \, .
\end{equation}
From these definitions, we have that 
\begin{equation}\label{eq:rho:}
\varrhokminus = \zeta^{-2} \Delta \varrhok \, .
\end{equation}
We begin by applying Proposition~\ref{prop:Celtics:suck} with $\varrho=\varrho_{(0)}$ and $\vartheta=\varrho_{(1)}$ so that $\varrho=\zeta^{-2}\Delta \varrho_{(1)}$. From \eqref{eq:diff:bounds} we have that the assumption on $\Phi$ is satisfied, and so we obtain
\begin{equation}\notag
    G^i \varrho_{(0)} \circ \Phi = \partial_m \RR^{im}_{(0)} + \partial_i P_{(0)} + E^i_{(0)} \, .
\end{equation}
From \eqref{eq:Celtics:suck:stress} and \eqref{eq:Celtics:suck:pressure}, we have that both the symmetric traceless stress $\RR^{im}_{(0)}$ and the pressure $P^i_{(0)}$ are defined in terms of the symmetric tensor
\begin{equation}\label{eq:to:estimate}
    \left( G^i A^m_\ell + G^m A^i_\ell - A^i_k A^m_k G^p \partial_p \Phi^\ell \right) \left(\zeta^{-2}\partial_\ell \varrho_{(1)}\right)\circ \Phi \, ,
\end{equation}
and so it will suffice to estimate \eqref{eq:to:estimate}. From \eqref{eq:DDpsi}, \eqref{eq:inverse:div:DN:G}, and the Sobolev embedding $W^{n+1,1}(\T^n)\hookrightarrow L^\infty(\T^n)$, we have that
\begin{equation}\notag
\left\| G^i A^m_\ell + G^m A^i_\ell - A^i_k A^m_k G^p \partial_p \Phi^\ell \right\|_{L^\infty} \lesssim \calc_G {\lambda^4} \, .
\end{equation}
From item~\eqref{item:inverse:i} and \eqref{eq:DN:Mikado:density}, we have that 
$$  \left\|  \zeta^{-2} \partial_\ell \varrho_{(1)}\circ \Phi \right\|_{L^1} = \left\| \zeta^{-2} \partial_\ell \varrho_{(1)}\right\|_{L^1}  = \left\| \zeta^{-2} \zeta^{-2(\dpot-1)} \partial_\ell \Delta^{\dpot-1}\vartheta \right\|_{L^1} \lesssim \zeta^{-1} \calc_* \, . $$
Thus we may bound the tensor in \eqref{eq:to:estimate} in $L^1(\T^n)$ by 
$$  \calc_G {\lambda^4} \zeta^{-1} \calc_* $$
as desired in \eqref{eq:inverse:div:stress:1}. The error term $E^i_{(0)}$ is given from \eqref{eq:Celtics:suck:error} by 
\begin{align} \notag
    E^i_{(0)}
&=  \left(  \partial_m \left(G^p A^i_k A^m_k - G^m A^i_k A^p_k\right)  \partial_p \Phi^\ell
- \partial_m G^i A^m_\ell  \right) (\zeta^{-2}\partial_\ell \varrho_{(1)}) \circ \Phi  \, .
\end{align}
Upon decomposition of the summation over $m$, $p$, $k$, and $\ell$, what remains is a sum of terms, each of which is a vector field with components indexed by $i$.  Each of these vector fields is of the form 
$$  G^i_{(1)} \zeta^{-2} \partial_\ell \varrho_{(1)} \circ \Phi \, ,  $$
where $\ell\in\{1,\dots,n\}$. In the above expression, the vector field $G^i_{(1)}$ is a single term of the form 
$$   \partial_m \left(G^p A^i_k A^m_k - G^m A^i_k A^p_k\right)  \partial_p \Phi^\ell
- \partial_m G^i A^m_\ell  \, , $$
where $m,p,k,\ell\in\{1,\dots,n\}$. From \eqref{eq:inverse:div:DN:G} and \eqref{eq:diff:bounds}, we have that 
\begin{equation}\notag
\left\| D^N \left( \partial_m \left(G^p A^i_k A^m_k - G^m A^i_k A^p_k\right)  \partial_p \Phi^\ell
- \partial_m G^i A^m_\ell \right) \right\|_{L^1} \lesssim \calc_G \lambda^{N+1} \, 
\end{equation}
for all $N\leq 2\dpot-1$.  However, the $L^1$ norm of $\left(\zeta^{-2}\partial_\ell\varrho_{(1)}\circ\Phi\right)$ is $\zeta^{-1}\calc_*$ as calculated above. Hence we are in a scenario similar to that in which we started, except we have ``traded a cheap derivative for an expensive derivative" - i.e. lost a factor of $\lambda$ but gained a factor of $\zeta^{-1}$ when estimating the $L^1$ norm of $E_{(0)}^i$ relative to that of the original product $G \varrho \circ \Phi$. So applying Proposition~\ref{prop:intermittent:inverse:div} with the new functions $G^i_{(1)}$ and $\zeta^{-2}\partial_\ell \varrho_{(1)}\circ \Phi$ and estimating as before will produce a new symmetric stress $\RR_{(1)}$, pressure $P_{(1)}$, and error $E_{(1)}$.  The bounds for these terms will differ from those of their respective predecessors by a factor of $\lambda\zeta^{-1}$. Repeating this process $\dpot$ times produces an explicitly computable sum of the form
\begin{align}\label{eq:sum:sum}
    G^i \varrho\circ \Phi = \sum_{j=0}^{\dpot -1} \left( \partial_m  \RR^{im}_{(j)} + \partial_i P_{(j)} \right) + E^i_{(\dpot-1)} \, .
\end{align}
The final error term $E_{(\dpot-1)}^i$ has the same mean as the left-hand size of \eqref{eq:sum:sum} and has gained the factor of smallness $\left(\lambda\zeta\right)^{\dpot-1}$ relative to $E_{(0)}$, which had already gained one factor of $\lambda\zeta^{-1}$. Hence we may estimate the size of $E^i_{(\dpot-1)}$ in $L^2$ by 
$$  \calc_G \lambda^{4+\dpot} \calc_* \zeta^{-\dpot} \, .  $$
Applying $\mathcal{R}$ to $\mathbb{P}_{\neq 0}E^i_{(d-1)}$, appealing to its $L^2\hookrightarrow L^2$ bound, and utilizing \eqref{eq:exchange} concludes the proof.
\end{proof}

\section{Construction of the Perturbation}
We next define the intermittent Mikado flows used in this paper.  The following Proposition is part of Proposition 4.3 from \cite{bmnv21}, to which we refer the reader for a proof of everything except \eqref{item:pipe:3}, as well as further properties about intermittent Mikado flows which are not relevant to this paper.
\begin{proposition}[Construction and Properties of Intermittent Pipe Flows]\label{pipeconstruction}
Given a vector $\xi$ belonging to the one of the sets of rational vectors $\mathcal{K}_n\subset\mathbb{Q}^{3}$ from Lemma~\ref{lem:linear:algebra}, $r^{-1},\lambda \in \mathbb{N}$ with $\lambda r\in \mathbb{N}$, and large integers $N$ and $d$, there exist vector fields $\WW_{\xi,\lambda,r}:\mathbb{T}^3\rightarrow\mathbb{R}^3$ and implicit constants depending on $N$ and $d$ but not $\lambda$ or $r$ such that the following hold:
\begin{enumerate}
    \item\label{item:pipe:1} There exists $\varrho:\mathbb{R}^2\rightarrow\mathbb{R}$ which is radially symmetric and given by the iterated Laplacian $\Delta^d \vartheta =: \varrho$ of a radially symmetric potential $\vartheta:\mathbb{R}^2\rightarrow\mathbb{R}$ with compact support in a ball of radius $\frac{1}{4}$ such that the following holds. There exists $\UU_{\xi,\lambda,r}:\mathbb{T}^3\rightarrow\mathbb{R}^3$ such that
    $$\displaystyle{\curl \UU_{\xi,\lambda,r} = \xi \lambda^{-2d}\Delta^d \left(\vartheta_{\xi,\lambda,r}\right) = \xi \varrho_{\xi,\lambda,r} =: \WW_{\xi,\lambda,r}}\, , \qquad \xi \cdot \nabla \varrho_{\xi,\lambda,r} = \xi\cdot\nabla\vartheta_{\xi,\lambda,r} = 0 \, . $$
    \item\label{item:pipe:3} $\WW_{\xi,\lambda,r}$ is a stationary, pressureless solution to the Euler equations, i.e.
    $$  \Div \WW_{\xi,\lambda,r} = 0, \qquad \Div\left( \WW_{\xi,\lambda,r} \otimes \WW_{\xi,\lambda,r} \right) = 0 \, . $$
    In addition, $\WW_{\xi,\lambda,r}$ is a stationary solution to the Euler-$\alpha$ equations with an explicitly computable pressure\footnote{The pressure corresponding to a stationary solution of the Euler-$\alpha$ equations is not uniquely determined unless a mean-zero condition is imposed.} - see \eqref{eq:pressure:formula}.
    \item\label{item:pipe:5} For all $n\leq N$, 
    \begin{equation}\label{e:pipe:estimates:1}
    {\left\| \nabla^n\vartheta_{\xi,\lambda,r} \right\|_{L^p(\mathbb{T}^3)} \lesssim \lambda^{n}r^{\left(\frac{2}{p}-1\right)} }, \qquad {\left\| \nabla^n\varrho_{\xi,\lambda,r} \right\|_{L^p(\mathbb{T}^3)} \lesssim \lambda^{n}r^{\left(\frac{2}{p}-1\right)} }
    \end{equation}
    and
    \begin{equation}\label{e:pipe:estimates:2}
    {\left\| \nabla^n\UU_{\xi,\lambda,r} \right\|_{L^p(\mathbb{T}^3)} \lesssim \lambda^{n-1}r^{\left(\frac{2}{p}-1\right)} }, \qquad {\left\| \nabla^n\WW_{\xi,\lambda,r} \right\|_{L^p(\mathbb{T}^3)} \lesssim \lambda^{n}r^{\left(\frac{2}{p}-1\right)} }.
    \end{equation}
    Furthermore, each of the functions listed above is $\frac{\T^3}{\lambda r}$-periodic.
    \item\label{item:pipe:6} Let $\Phi:\mathbb{T}^3\times[0,T]\rightarrow \mathbb{T}^3$ be the periodic solution to the transport equation
\begin{subequations}\label{e:phi:transport}
\begin{align}
\partial_t \Phi + v\cdot\nabla \Phi &=0, \\
\Phi_{t=t_0} &= x\, ,
\end{align}
\end{subequations}
with a smooth, divergence-free, periodic velocity field $v$. Then
\begin{equation}\label{eq:pipes:flowed:1}
\nabla \Phi^{-1} \cdot \left( \WW_{\xi,\lambda,r} \circ \Phi \right) = \curl \left( \nabla\Phi^T \cdot \left( \mathbb{U}_{\xi,\lambda,r} \circ \Phi \right) \right).
\end{equation}
\end{enumerate}
\end{proposition}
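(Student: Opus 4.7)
My plan is to verify only item~\ref{item:pipe:3}, since items~\ref{item:pipe:1}, \ref{item:pipe:5}, and \ref{item:pipe:6} come verbatim from \cite{bmnv21}. Writing $W := \WW_{\xi,\lambda,r} = \xi\, \varrho_{\xi,\lambda,r}$, the argument will lean on three structural facts from item~\ref{item:pipe:1}: $\xi \cdot \nabla \varrho_{\xi,\lambda,r} = 0$, $|\xi| = 1$, and the radial symmetry of $\varrho_{\xi,\lambda,r}$ in every plane orthogonal to $\xi$ (so on each connected component of its support $\varrho_{\xi,\lambda,r}$ is a function only of the distance $r$ to the axis).

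First, the two Euler identities are immediate: $\Div W = \xi \cdot \nabla \varrho_{\xi,\lambda,r} = 0$, and
\[
[\Div(W \otimes W)]^i = \partial_j\bigl(\xi^i \xi^j \varrho_{\xi,\lambda,r}^2\bigr) = 2\xi^i \varrho_{\xi,\lambda,r} (\xi \cdot \nabla \varrho_{\xi,\lambda,r}) = 0,
\]
which establishes that $W$ is a stationary, pressureless Euler flow.

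Next, for Euler-$\alpha$ I will plug $u = W$ into \eqref{eq:euler:alpha1} with $\partial_t u \equiv 0$. Since $\Delta u = \xi \Delta \varrho_{\xi,\lambda,r}$ and $\xi \cdot \nabla$ commutes with $\Delta$, the transport term $u \cdot \nabla(u - \alpha^2 \Delta u)$ vanishes outright, while the stretching term simplifies via $|\xi| = 1$ to $(u - \alpha^2 \Delta u)^j \nabla u^j = (\varrho_{\xi,\lambda,r} - \alpha^2 \Delta \varrho_{\xi,\lambda,r}) \nabla \varrho_{\xi,\lambda,r}$. Thus stationarity reduces to producing a scalar $p$ with
\[
\nabla p = -\nabla\bigl(\tfrac{1}{2}\varrho_{\xi,\lambda,r}^2\bigr) + \alpha^2 \Delta \varrho_{\xi,\lambda,r}\, \nabla \varrho_{\xi,\lambda,r}.
\]

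The first summand is manifestly a gradient, so the main (and really only) obstacle is to realize $\Delta \varrho_{\xi,\lambda,r}\, \nabla \varrho_{\xi,\lambda,r}$ as a gradient. Radial symmetry resolves this: on each connected component of $\supp \varrho_{\xi,\lambda,r}$ I can write $\varrho_{\xi,\lambda,r}(x) = f(r(x))$ with $r(x)$ the distance to the relevant axis, so that $\nabla \varrho_{\xi,\lambda,r} = f'(r)\,\hat r$ and $\Delta \varrho_{\xi,\lambda,r}$ also depends only on $r$. Hence $\Delta \varrho_{\xi,\lambda,r}\, \nabla \varrho_{\xi,\lambda,r}$ is a radial vector field in the plane orthogonal to $\xi$ and vanishes in the $\xi$ direction, so it equals $\nabla \Psi$ for the explicit antiderivative $\Psi(x) = \int_0^{r(x)} \Delta \varrho_{\xi,\lambda,r}(s)\, f'(s)\, \mathrm{d}s$. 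Setting $p = -\tfrac{1}{2}\varrho_{\xi,\lambda,r}^2 + \alpha^2 \Psi$ on each support component and extending by a constant elsewhere (the components are pairwise disjoint because the period $(\lambda r)^{-1}$ strictly exceeds the pipe diameter, inherited from the radius-$\sfrac{1}{4}$ support in item~\ref{item:pipe:1}) gives \eqref{eq:pressure:formula} and closes the argument.
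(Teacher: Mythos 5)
Your verification of item~\ref{item:pipe:3} is correct in substance, and it relies on the same mechanism the paper uses: the pipe axis direction satisfies $\xi\cdot\nabla\varrho = 0$ (which kills the transport term even with the Laplacian in place, since $\xi\cdot\nabla$ is constant-coefficient and commutes with $\Delta$), and the radial symmetry of the flow profile guarantees that the remaining vector field is a gradient, computable by a one-dimensional radial antiderivative. The difference is one of formulation: you plug $u = W$ directly into \eqref{eq:euler:alpha1}, arriving at $\nabla p = -\varrho\nabla\varrho + \alpha^2\,\Delta\varrho\,\nabla\varrho$, whereas the paper switches to the curl form \eqref{eq:euler:alpha} and computes $\curl\optwo\times u$, so the nonlinearity appears as $-\tfrac12\nabla\varrho^2 + \alpha^2 e_r\,\partial_r\!\left(\partial_r^2 h + \tfrac{1}{r}\partial_r h\right)h$. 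Both are fine, and yours is arguably the more elementary calculation since it avoids the curl identity.

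One small inaccuracy at the very end: you assert that your $p = -\tfrac12\varrho^2 + \alpha^2\Psi$ with $\Psi = \int_0^{r}\Delta\varrho(s)\,f'(s)\,ds$ ``gives \eqref{eq:pressure:formula}.'' It does not, and this is not a typo in the paper: the two formulations differ precisely by the Bernoulli-type gradient $\nabla\bigl(u\cdot(u-\alpha^2\Delta u)\bigr) = \nabla(\varrho^2 - \alpha^2\varrho\,\Delta\varrho)$, so the pressures differ by $\varrho^2 - \alpha^2\varrho\Delta\varrho$ (up to a constant). Indeed, integrating the paper's $\int_0^r \partial_s(\Delta_y h)\,h\,ds$ by parts and subtracting $\varrho^2 - \alpha^2\varrho\Delta\varrho$ recovers your formula; note in particular the opposite sign on $\tfrac12 h^2$. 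You should state your own pressure formula rather than claim to reproduce \eqref{eq:pressure:formula}, or explicitly note the passage between the two formulations.
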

\begin{proof}[Proof of Proposition~\ref{pipeconstruction}]
We refer to \cite{bmnv21} for the proofs of each item, save for the stationarity with respect to the Euler-$\alpha$ equations. Due to the presence of the Laplacian in the Euler-$\alpha$ equations, we must define the Mikado flows using a radially symmetric flow profile. We shall define the objects $\UU$, $\WW$, $\varrho$, and $\vartheta$ at unit scale and refer to \cite{bmnv21} for the concentration and periodizing. Thus, let $h:\R^+ \to \R $ be a compactly supported smooth function satisfying
\begin{equation}
    \label{eq:zero:mean}
    \int_0^{\infty} h(r) {r} \,  dr = 0 \,, \qquad h(r) = \left( \partial_{r}^2 + \frac{1}{r} \partial_r \right)^d H(r) \,  .
\end{equation}
Note that the second property asserts that in Cartesian coordinates, $h$ is the iterated Laplacian of $H$, and so the second property implies the first. Then define
\begin{equation}
    \label{def:radial:profile}
    \varrho(y_1, y_2) := h\left(\sqrt{y_1^2 + y_2^2}\right) \, , \qquad \vartheta(y_1,y_2) := H\left(\sqrt{y_1^2 + y_2^2}\right) \, .
\end{equation}
Furthermore, define the rotated version of \eqref{def:radial:profile} by 
\begin{equation}
\label{def:rotated:radial:profile}
    \varrho_k(x) := \varrho(k_1 \cdot x, k_2 \cdot x)\, , \quad x \in \R^3 \, ,
\end{equation}
where $(k_1, k_2, k)$ forms a rational orthonormal basis of $\R^3$. Define the radial Mikado flow associated to \eqref{def:rotated:radial:profile} as 
\begin{equation}
    \label{def:radial:Mikado}
    \WW_k(x) = \varrho_k(x) k. 
\end{equation}
We note that one may convert to cylindrical coordinates adapted to the support of $\varrho_k$; after doing so, it is clear that $\Delta u^j \nabla u^j$ is curl-free.  Since it is also mean-zero, it must be given by a pressure gradient.  It is easy to check that the other terms in \eqref{eq:euler:alpha} vanish for the radial Mikado flow, or are equal to a pressure gradient, showing the stationarity.  For the sake of completeness, we still compute explicitly the pressure associated with the radial flow profile, using the formulation \eqref{eq:euler:alpha}. 

We have that 
\begin{equation}
    \Div(\WW_k) = k \cdot \nabla \varrho_k = 0 \, .\notag
\end{equation}
A stationary solution of \eqref{eq:euler:alpha} must satisfy
\begin{equation}
\label{eq:stationary:euler:alpha}
    \curl \optwo \times u + \nabla p = 0 \, .
\end{equation}
We first note that 
\begin{align}
    \curl_x \WW_k \times \WW_k &= \curl_x(\varrho_k k) \times \varrho_k k \notag\\
    &= (-\partial_{y_1} \varrho(k_1\cdot x, k_2\cdot x) k_2 + \partial_{y_2} \varrho(k_1\cdot x, k_2\cdot x) k_1 
    ) \times \varrho_k k \notag\\
    &= - (k_1 \partial_{y_1} \varrho(k_1\cdot x, k_2\cdot x) \varrho_k + k_2 \partial_{y_2} \varrho(k_1\cdot x, k_2\cdot x) \varrho_k) \notag\\
    &= -\frac{1}{2} \nabla_x \varrho_k^2 \, . \notag
\end{align}
To compute the pressure arising from the term 
$$
\curl \Delta \WW_k \times \WW_k \, ,
$$
we will use polar coordinates adapted to the $k_1, k_2$ plane. Hence $ r = \sqrt{(k_1 \cdot x)^2 + (k_2 \cdot x)^2 }$ will represent the distance from the $k$ axis, and $\theta$ will measure rotation around the $k$ axis. We also define $g(y) = \Delta_y \varrho(y)$ and  $g_k(x) = g(k_1 \cdot x, k_2 \cdot x)$ so that 
$$
g_k(x) = g(k_1 \cdot x, k_2 \cdot x ) = \Delta_x( \varrho_k(x)) = (\Delta_y\varrho)(k_1 \cdot x, k_2 \cdot x)
$$
Using these conventions we have
\begin{align}
    \curl \Delta &\WW_k \times \WW_k \notag\\
    &= (-k_2 \partial_{y_1} g(k_1\cdot x, k_2\cdot x) + k_1 \partial_{y_2} g(k_1\cdot x, k_2\cdot x) ) \times \varrho_k k  \notag\\
   & = -k_1 \partial_{y_1} g(k_1\cdot x, k_2\cdot x) \varrho_k - k_2 \partial_{y_2} g(k_1\cdot x, k_2\cdot x) \varrho_k \notag\\
    &= -k_1 \cos \theta \partial_r \left( \partial_r^2 h\left(\sqrt{(k_1\cdot x)^2 + (k_2\cdot x)^2} \right) + \frac{1}{r} \partial_r h\left(\sqrt{(k_1\cdot x)^2 + (k_2\cdot x)^2} \right)  \right) h \left(\sqrt{(k_1\cdot x)^2 + (k_2\cdot x)^2} \right) \notag\\
    &\quad - k_2 \sin \theta \partial_r \left( \partial_r^2 h\left(\sqrt{(k_1\cdot x)^2 + (k_2\cdot x)^2} \right) + \frac{1}{r} \partial_r h\left(\sqrt{(k_1\cdot x)^2 + (k_2\cdot x)^2} \right)  \right)h\left(\sqrt{(k_1\cdot x)^2 + (k_2\cdot x)^2} \right) \notag \\
&= -e_r \partial_r \left( \partial_r^2 h\left(\sqrt{(k_1\cdot x)^2 + (k_2\cdot x)^2} \right) + \frac{1}{r} \partial_r h\left(\sqrt{(k_1\cdot x)^2 + (k_2\cdot x)^2} \right)  \right)h\left(\sqrt{(k_1\cdot x)^2 + (k_2\cdot x)^2} \right) \, , \notag
\end{align}
where we used that the Laplacian applied to radial functions is given as $\partial_r^2 + \frac{\partial_r}{r}$ to go from the second to the third equality. Note that   $h$ and its derivatives are evaluated at $\sqrt{(k_1 \cdot x)^2 + (k_2 \cdot x)^2} $. 
We can write this as 
$$
-e_r \partial_r \left( \partial_r^2 h + \frac{1}{r} \partial_r h  \right)h = -\nabla_x \int_0^{\sqrt{(k_1 \cdot x)^2 + (k_2 \cdot x)^2} } \partial_r \left( \partial_r^2 h + \frac{1}{r} \partial_r h  \right)h \, dr \, .
$$
Therefore, the pressure is a radial function given by
\begin{equation}
p\left(\sqrt{(k_1\cdot x)^2 + (k_2\cdot x)^2} \right) = \frac{\left(h\left(\sqrt{(k_1\cdot x)^2 + (k_2\cdot x)^2} \right)\right)^2}{2} - \alpha^2\int_0^{\sqrt{(k_1 \cdot x)^2 + (k_2 \cdot x)^2} } \partial_r \left( \partial_r^2 h + \frac{1}{r} \partial_r h \right)h \, dr \, . \label{eq:pressure:formula}
\end{equation}
\end{proof}

\begin{lemma}[Calculating the Averages]\label{lem:averages}
Let $\xi$ be a rational unit direction vector, and let $\WW_{\xi}=\varrho_\xi \xi$ be the unit scale, un-concentrated Mikado flow in the direction $\xi$ as in \eqref{def:radial:Mikado}. Then if $\left\| \nabla \varrho_\xi \right\|_{L^2(\mathbb{T}^3)}^2=\calc$, we have that
\begin{align}\label{eq:average}
\int_{\mathbb{T}^3} \WW_\xi^k \partial_{mm} \WW_\xi^\ell + \partial_k \WW_\xi^j \partial_\ell \WW_\xi^j = \frac{\calc}{2} \left( \delta^{k\ell} - 3\xi^k\xi^\ell \right) \, . 
\end{align}
\end{lemma}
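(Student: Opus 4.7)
My plan is to compute the two terms separately by exploiting the very explicit structure $\WW_\xi = \varrho_\xi \xi$ and the radial symmetry of $\varrho_\xi$ in the plane orthogonal to $\xi$. Throughout I will use the orthonormal frame $(k_1,k_2,\xi)$ adapted to $\WW_\xi$ together with the coordinates $y_a = k_a\cdot x$ (for $a=1,2$), so that $\varrho_\xi(x) = \varrho(y_1,y_2)$ with $\varrho$ radial in $(y_1,y_2)$.

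For the first term, since $\WW_\xi^\ell = \varrho_\xi \xi^\ell$ and $\xi$ is constant, we have $\partial_{mm}\WW_\xi^\ell = \xi^\ell \Delta \varrho_\xi$, and so
\begin{equation*}
    \int_{\mathbb{T}^3} \WW_\xi^k \partial_{mm}\WW_\xi^\ell\,dx = \xi^k \xi^\ell \int_{\mathbb{T}^3} \varrho_\xi \Delta \varrho_\xi\,dx = -\xi^k\xi^\ell \int_{\mathbb{T}^3}|\nabla \varrho_\xi|^2\,dx = -\calc\,\xi^k\xi^\ell,
\end{equation*}
after an integration by parts (no boundary terms since we are on $\mathbb{T}^3$).

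For the second term, $\partial_k\WW_\xi^j \partial_\ell \WW_\xi^j = \partial_k\varrho_\xi \partial_\ell \varrho_\xi$ because $|\xi|=1$. Writing $\partial_i \varrho_\xi = \sum_{a=1,2} (\partial_{y_a}\varrho)(k_a)_i$, I obtain
\begin{equation*}
    \int_{\mathbb{T}^3}\partial_k\varrho_\xi\partial_\ell\varrho_\xi\,dx = \sum_{a,b\in\{1,2\}} (k_a)_k(k_b)_\ell \int_{\mathbb{T}^3}\partial_{y_a}\varrho\,\partial_{y_b}\varrho\,dx.
\end{equation*}
Radial symmetry of $\varrho$ in the $(y_1,y_2)$-plane forces the cross integrals to vanish and the two diagonal integrals to be equal; hence each equals $\tfrac{1}{2}\int|\nabla\varrho_\xi|^2 = \calc/2$. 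Using $(k_1)_k(k_1)_\ell+(k_2)_k(k_2)_\ell+\xi^k\xi^\ell = \delta^{k\ell}$, this yields $\tfrac{\calc}{2}(\delta^{k\ell}-\xi^k\xi^\ell)$.

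Summing the two contributions gives $-\calc\,\xi^k\xi^\ell + \tfrac{\calc}{2}(\delta^{k\ell}-\xi^k\xi^\ell) = \tfrac{\calc}{2}(\delta^{k\ell}-3\xi^k\xi^\ell)$, which is \eqref{eq:average}. The only point requiring a little care is the vanishing of the off-diagonal integrals $\int \partial_{y_1}\varrho\,\partial_{y_2}\varrho$ and the equality of the diagonal ones; both follow from the rotational invariance of $\varrho$ in the $(y_1,y_2)$-plane, which is a direct consequence of the definition in \eqref{def:radial:profile}. Since everything else is direct computation, there is no serious obstacle.
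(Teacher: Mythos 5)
Your proof is correct and follows essentially the same route as the paper's: integration by parts for the Laplacian term, chain rule and the orthonormal frame $(k_1,k_2,\xi)$ for the gradient term, with the cross integrals vanishing and the diagonal ones equaling $\calc/2$ by radial symmetry of $\varrho$ in the $(y_1,y_2)$-plane. The paper simply makes the vanishing of $\int\partial_{y_1}\varrho\,\partial_{y_2}\varrho$ more explicit by passing to polar coordinates and integrating $\sin\theta\cos\theta$ over a period, which is the same observation you invoke.
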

\begin{proof}
Since $\xi$ is fixed throughout the proof, we simply write $\varrho$ rather than $\varrho_\xi$ to lighten the notation. Then to calculate the first integral, we write
\begin{align}
    \int_{\mathbb{T}^3} \WW_\xi^k \partial_{mm} \WW_\xi^\ell \, dx&= \int_{\mathbb{T}^3} \varrho(x\cdot\xi_1,x\cdot\xi_2) \partial_{mm} \left( \varrho(x\cdot\xi_1,x\cdot\xi_2) \right) \xi^k \xi^\ell \, dx  \notag\\
    &= - \xi^k \xi^\ell \int_{\mathbb{T}^3} \partial_p \varrho(x\cdot\xi_1,x\cdot\xi_2) \xi_{p}^m \partial_n \varrho(x\cdot\xi_1,x\cdot\xi_2) \xi_n^m \,dx \notag\\
    &= -\xi^k\xi^\ell \int_{\mathbb{T}^3} \delta_{np} \partial_p \varrho(x\cdot\xi_1,x\cdot\xi_2) \partial_n \varrho(x\cdot\xi_1,x\cdot\xi_2) \,dx \notag\\
    &= -\xi^k \xi^\ell \int_{\mathbb{T}^3} \partial_n \varrho(x\cdot\xi_1,x\cdot\xi_2) \partial_n \varrho(x\cdot\xi_1,x\cdot\xi_2) \,dx \notag\\
    &:= - \calc \xi^k \xi^\ell \, . \label{eq:average:one}
\end{align}
Note that in the above calculation, $\partial_{mm}$ denotes partial differentation computed with respect to the standard coordinate basis, while $\partial_p \varrho$ denotes the derivative of $\varrho$ with respect to $x\cdot \xi_p$. We have also used that 
$$ \calc = \left\| \nabla_{e_1,e_2,e_3} \varrho_\xi \right\|_{L^2(\mathbb{T}^3)}^2 = \int_{\mathbb{T}^3} \left((\partial_1 \varrho)^2 + (\partial_2 \varrho)^2\right)(x\cdot\xi_1,x\cdot\xi_2) \,dx \, .$$
For the second integral, we must calculate 
$$  \int_{\mathbb{T}^3} \partial_k \WW_\xi^j \partial_\ell \WW_\xi^j  = \int_{\mathbb{T}^3} \xi^j \,  \partial_n \varrho(x\cdot\xi_1,x\cdot\xi_2) \,  \xi_n^k \,  \xi^j \,  \partial_p \varrho(x\cdot\xi_1,x\cdot\xi_2) \,  \xi_p^\ell \, . $$
Since we are summing over both $n$ and $p$, we have four cases corresponding to 
$$(n,p)\in\{(1,1),(1,2),(2,1),(2,2)\} \, . $$
If $n=p=1$, then the expression is equal to
\begin{align}\label{eq:average:two}
     \int_{\mathbb{T}^3} \xi^j \,  \partial_1 \varrho(x\cdot\xi_1,x\cdot\xi_2) \,  \xi_1^k \,  \xi^j \,  \partial_1 \varrho(x\cdot\xi_1,x\cdot\xi_2) \,  \xi_1^\ell  &= \left\| \partial_1 \varrho \right\|_{L^2(\mathbb{T}^3)}^2 \xi_1^k \xi_1^\ell \, ,
\end{align}
and for $n=p=2$, we obtain
\begin{equation}\label{eq:average:three}
\left\| \partial_2 \varrho \right\|_{L^2(\mathbb{T}^3)}^2 \xi_2^k \xi_2^\ell \, .
\end{equation}
However, since $\WW_\xi$ is in fact radial around its axis, we have that by the assumption that $\left\| \nabla \varrho \right\|_{L^2}^2=\calc$,
\begin{equation}\label{eq:average:four}
\left\| \partial_1 \varrho \right\|_{L^2(\mathbb{T}^3)}^2 = \left\| \partial_2 \varrho \right\|_{L^2(\mathbb{T}^3)}^2 = \frac{\calc}{2} \,.
\end{equation}
Moving to the cases $n\neq p$, for example $n=1$ and $p=2$, the expression becomes 
\begin{align}\label{eq:pipe:zero}
     \int_{\mathbb{T}^3} \xi^j \,  \partial_1 \varrho(x\cdot\xi_1&,x\cdot\xi_2) \,  \xi_1^k \,  \xi^j \,  \partial_2 \varrho(x\cdot\xi_1,x\cdot\xi_2) \,  \xi_2^\ell \, .
\end{align}
Consider a single cross section of the pipe with diameter $2d$; in cylindrical coordinates, this is the set 
$$  \{z=z_0,\, r \leq d, \, 0 \leq \theta \leq 2\pi \} \, . $$
The coordinate transformations for partial derivatives in cylindrical (or polar) coordinates for functions which do not depend on $\theta$ are 
$$  \partial_1 \rightarrow \cos(\theta) \partial_r, \qquad \partial_2 \rightarrow \sin(\theta) \partial_r \, . $$
Since $\varrho$ does not depend on $\theta$, the integrand over this cross-section of pipe becomes
$$ \xi_1^k \xi_2^\ell \int_0^{2\pi} \int_0^d \cos(\theta)\partial_r \varrho(r) \sin(\theta) \partial_r \varrho(r) r \,dr\,d\theta = 0 \, , $$
since $\cos(\theta)\sin(\theta) = \sfrac{1}{2}\sin(2\theta)$ integrates to $0$ from $0$ to $2\pi$, which shows that the entire term in \eqref{eq:pipe:zero} vanishes.

Finally, we have that
$$\xi\otimes\xi+\xi_1\otimes\xi_1+\xi_2\otimes\xi_2=\textnormal{Id}, \qquad \xi_1^k\xi_1^\ell+\xi_2^k\xi_2^\ell= \textnormal{Id}-\xi^k\xi^\ell \, $$
for any orthonormal basis $\{\xi,\xi_1,\xi_2\}$ of $\mathbb{R}^3$. Combining this with \eqref{eq:average:one}, \eqref{eq:average:two}, \eqref{eq:average:three}, and \eqref{eq:average:four}, we deduce the equality in \eqref{eq:average}.
\end{proof}
\subsection{Definition of and estimates on the perturbation}

We define a compactly supported temporal cutoff function $\eta:(-1,1)\rightarrow[0,1]$ which induces a $C^\infty$ partition of unity of $\mathbb{R}$ according to
\begin{equation}\label{eq:time:partition}
  \sum_{i\in\mathbb{Z}} \eta^2 (\cdot - i ) \equiv 1 \, .
\end{equation}
To set the scale for our cutoffs we define a parameter $\tau_q$ as
\begin{equation}
    \label{def:tau}
    \tau_q := \ell^3\, .
\end{equation}
Letting $t_i = i \tau_q$ for $i\in\mathbb{Z}$, we define the rescaled and translated cutoff functions by
\begin{equation}\label{eq:eta:q:i}
    \eta_{q,i}(t) = \begin{dcases}
\eta\left(\tau_q^{-1}(t-t_i)\right) & \supp_t \RR_q \cap \left[t_i-\tau_q,t_i+\tau_q\right] \neq \emptyset \\
0 & \textnormal{otherwise} \, .
\end{dcases} 
\end{equation}
We have that $\eta_{q,i}$ satisfies the properties
\begin{equation}\label{eq:eta:q:i:props}
    \sum_{i\in\mathbb{Z}} \eta_{q,i}^2(t) \equiv 1 \quad  \forall t \in \supp_t \RR_q \, , \qquad \left| \partial_t^m \eta_{q,i} \right| \lesssim \tau_q^{-m} \,, \qquad \eta\qi \eta\qiprime \neq 0 \implies |i-i'|\leq 1 \, .
\end{equation}
Define $\Phi_{q,i}$ to be the solution to the transport equation
\begin{subequations}\notag
\begin{align}
  \left(\partial_t + u_\ell \cdot \nabla\right) \Phi_{q,i} &= 0 \\
   \Phi_{q,i}(t_i,x) &= x \, .
\end{align}
\end{subequations}

\begin{lemma}[Deformation bounds]\label{lem:deformation}
The following estimates hold for the deformation maps $\Phi_{q,i}$:
\begin{enumerate}
    \item For each $t\in\mathbb{R}$, $\Phi_{q,i}(t,\cdot):\mathbb{T}^d\rightarrow\mathbb{T}^d$ is a diffeomorphism, and we denote the (time-dependent) inverse map by $X_{q,i}$, so that $\Phi_{q,i}(t,\cdot)\circ X_{q,i}(t,\cdot)=\Id$ for all $t\in\mathbb{R}$.
    \item For all $t\in \supp \eta_{q,i}$ and $N\geq 2$, 
    \begin{subequations}
    \begin{align}
        \left\| \nabla \Phi_{q,i}(t) - \Id \right\|_{0} \, ,         \left\| \nabla X_{q,i}(t) - \Id \right\|_{0} &\leq {\ell} \label{eq:flow:one} \\
        \left\| \nabla^N \Phi_{q,i}(t) \right\|_0\, , \left\| \nabla^N X_{q,i}(t) \right\|_0 &\lesssim \ell^{-(N-1)} \label{eq:flow:two} \\
        \left\| \nabla^{N-2} \partial_t \Phi_{q,i} \right\|_0\, , \left\| \nabla^{N-2} \partial_t X_{q,i} \right\|_0 &\lesssim \ell^{-(N+1)} \, . \label{eq:flow:three}
    \end{align}
    \end{subequations}
\end{enumerate}
\end{lemma}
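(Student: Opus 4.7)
The diffeomorphism claim follows from classical ODE theory applied to the forward Lagrangian flow $X_{q,i}$ defined by $\partial_t X_{q,i}(t,x) = u_\ell(t, X_{q,i}(t,x))$ with $X_{q,i}(t_i,x)=x$. Since $u_\ell$ is smooth and divergence free, $X_{q,i}(t,\cdot)$ is a volume-preserving $C^\infty$ diffeomorphism of $\mathbb{T}^n$, and a direct computation using $D_t \Phi_{q,i} = 0$ shows that $\Phi_{q,i}(t, X_{q,i}(t,x))=x$, so $\Phi_{q,i}(t,\cdot)$ is the inverse of $X_{q,i}(t,\cdot)$.

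For the $C^1$ bound \eqref{eq:flow:one}, I would differentiate $D_t \Phi_{q,i} = 0$ in $x$ to obtain the transport-type equation
\begin{equation*}
D_t (\partial_j \Phi_{q,i}^k) = -(\partial_j u_\ell^m)\,(\partial_m \Phi_{q,i}^k),\qquad \nabla \Phi_{q,i}(t_i,\cdot) = \Id \, .
\end{equation*}
Integrating along characteristics on the interval $|t-t_i|\leq \tau_q$ and applying Gr\"onwall's inequality gives
\begin{equation*}
\|\nabla \Phi_{q,i} - \Id\|_0 \lesssim \tau_q \|\nabla u_\ell\|_0 \exp\!\left(\tau_q\|\nabla u_\ell\|_0\right) \, .
\end{equation*}
The choice $\tau_q = \ell^3$, combined with appropriately sharpened mollification estimates for $\|\nabla u_\ell\|_0$ obtained via Young's inequality from the inductive bound $\|\nabla u_q\|_{L^2}\leq 1$, makes the product $\tau_q\|\nabla u_\ell\|_0$ a positive power of $\ell$, so the exponential factor is $O(1)$ and one obtains $\|\nabla \Phi_{q,i} - \Id\|_0 \lesssim \ell$. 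The analogous estimate for $X_{q,i}$ follows by repeating the argument on its defining ODE.

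For the higher spatial derivative bounds \eqref{eq:flow:two}, I would proceed by induction on $N\geq 2$. Differentiating the transport equation $N$ times yields an equation of the form
\begin{equation*}
D_t (\nabla^N \Phi_{q,i}) = -(\nabla u_\ell)\cdot \nabla^N\Phi_{q,i} + F_N\bigl(\nabla u_\ell,\ldots,\nabla^N u_\ell;\ \nabla\Phi_{q,i},\ldots,\nabla^{N-1}\Phi_{q,i}\bigr) \, ,
\end{equation*}
where $F_N$ is a Fa\`a di Bruno-type polynomial. The top-order piece is again handled by Gr\"onwall exactly as before, while the forcing terms $F_N$ are bounded using the inductive hypothesis on $\nabla^k\Phi_{q,i}$ for $k<N$ together with the mollification estimate $\|\nabla^k u_\ell\|_0 \lesssim \ell^{-3/2-k}$ from Lemma~\ref{lem:mollifying}. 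Each time integration absorbs one factor of $\tau_q = \ell^3$, and straightforward bookkeeping shows that the dominant scaling comes from the pure term $(\nabla^N u_\ell)(\nabla \Phi_{q,i})$, producing the claimed $\ell^{-(N-1)}$. The bound for $X_{q,i}$ follows either by repeating the argument for $\partial_t X_{q,i}=u_\ell(t,X_{q,i})$ and using Fa\`a di Bruno on the composition $u_\ell\circ X_{q,i}$, or algebraically by differentiating $\Phi_{q,i}\circ X_{q,i}=\Id$ and inverting.

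For the mixed space-time bounds \eqref{eq:flow:three}, the equations $\partial_t\Phi_{q,i}=-u_\ell\cdot\nabla\Phi_{q,i}$ and $\partial_t X_{q,i}=u_\ell(t,X_{q,i})$ reduce the problem to the spatial estimates just established: applying $\nabla^{N-2}$ and the Leibniz (resp.\ Fa\`a di Bruno) rule, then inserting the previously proved bounds, yields an estimate of order $\ell^{-(N+1)}$ after accounting for the extra derivative on $u_\ell$ producing the loss $\ell^{-3/2-1}$. The main technical obstacle is the higher-$N$ induction: many terms appear via the chain rule, and one must verify both that Gr\"onwall's exponential factor remains bounded and that all the forcing terms are dominated by the pure $(\nabla^N u_\ell)(\nabla\Phi_{q,i})$ contribution. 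The choice $\tau_q = \ell^3$ is precisely what makes this bookkeeping close, since it turns each factor of $\|\nabla u_\ell\|_0$ picked up from time integration into a positive power of $\ell$ small enough to be safely exponentiated.
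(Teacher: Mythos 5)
Your proposal is correct and follows essentially the same route as the paper. The paper dispatches the estimates by citing Proposition~D.1 of \cite{BDLISZ15} (which is precisely the Gr\"onwall-on-the-transport-equation argument you spell out), together with the mollification bound $\|\nabla u_\ell\|_0\lesssim\ell^{-3/2}$ and the choice $\tau_q=\ell^3$; your reconstruction of that cited argument, including the Fa\`a di Bruno / inverse-function-theorem passage from $\Phi_{q,i}$ to $X_{q,i}$ and the Leibniz-rule treatment of $\nabla^{N-2}\partial_t\Phi_{q,i}=-\nabla^{N-2}(u_\ell\cdot\nabla\Phi_{q,i})$ and $\partial_t X_{q,i}=u_\ell(X_{q,i},t)$, matches the paper's strategy point for point.
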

\begin{proof}
The fact that $\Phi_{q,i}$ is a diffeomorphism follows from the divergence-free property of the vector field $u_\ell$, and then automatically $\Phi_{q,i}^{-1}=X_{q,i}$ is a diffeomorphism as well. The proof of \eqref{eq:flow:one} for $\Phi(t)$ follows precisely the proof of Proposition D.1, (135) in \cite{BDLISZ15}.  We use \eqref{eq:inductive} and  Young's inequality to deduce that $\| \nabla u_{\ell}\|_0 \lesssim \ell^{-\frac{3}{2}}$.  {The choice of $\tau_q=\ell^3$} then implies  that the deviation of $\nabla\Phi_{q,i}$ from the identity is bounded by $\ell^{-\frac{3}{2}}\cdot\ell^3\leq \ell$.  The same estimate for $\nabla X$ follows from the inverse function theorem. The estimate \eqref{eq:flow:two} for $\Phi_{q,i}$ follows \cite{BDLISZ15} again, while the same estimate for $X_{q,i}$ follows from the Fa'a di Bruno formula and the fact that $\nabla X_{q,i}$ is a $C^\infty$ function of the entries of $\nabla \Phi_{q,i}$. The final estimate in \eqref{eq:flow:three} for $\Phi\qi$ follows from writing $\nabla^{N-2}\partial_t \Phi_{q,i} = \nabla^{N-2}\left(u_\ell\cdot\nabla\Phi_{q,i}\right) $ and utilizing the spatial derivative bounds on $u_\ell$ and $\Phi\qi$ and standard H\"older estimates for products. The bound for $\partial_t X\qi$ follow from the fact that $\partial_t X\qi(t,x)=u_\ell(X\qi(t,x),t)$ and repeated application of the chain rule (or Fa'a di Bruno formula); see for example Proposition A.1, estimate A.5 from \cite{BDLSV17}. 
\end{proof}

Define the cutoff function $\chi(z):[0,\infty)\rightarrow\mathbb{R}$ to be a smooth function satisfying
\begin{equation}\chi(z) = 
\begin{dcases}
1 & 0 \leq z \leq 1 \\
z & z \geq 2
\end{dcases} \, , \qquad z \leq 2\chi(z) \leq {4z} \quad \forall z\in(1,2) \, . \label{eq:def:chi}
\end{equation}
Recall that in Lemma~\ref{lem:linear:algebra}, we chose a value of $N\in \mathbb{N}$, which in turn sets a value of $\varepsilon$.  We choose $N=2$ and let $\varepsilon$ be as in the statement of the Lemma. We define
\begin{equation}\label{eq:def:rho}
    \rho_{q}(x,t) = \left(2 \delta_{q+1} {\lambda_{q+1}^2}\varepsilon^{-1} \calc_\RR\right) \cdot \chi \left( (\calc_\RR \delta_{q+1} {\lambda_{q+1}^2}\alpha^2)^{-1} \left| \RR_{\ell}(x,t) \right| \right)  \, .
\end{equation}
Simple calculations give that
\begin{equation}\label{eq:R:rho}
  \frac{\left|\alpha^{-2}\RR_\ell \right|}{\rho_{q}(x,t)} \leq \varepsilon \, , \qquad  \left\| \rho_{q} \right\|_{L^p} \leq \frac{3}{\varepsilon} \left( \mathcal{C}_{\RR} (8 \pi^3)^{\frac{1}{p}} \delta_{q+1}\lambda_{q+1}^2  + \left\| \alpha^{-2}\RR_\ell \right\|_{L^p} \right) \, .
\end{equation}

For ease of notation, let us define
\begin{equation}\label{eq:rell:def}
R_\ell = \alpha^{-2} \frac{\RR_\ell}{\rho_q} \, .    
\end{equation}
From \eqref{eq:R:rho} we have that $c_k^n\left(R_\ell\right)$ is well-defined for $1\leq k \leq 9$, $n=0,1$, and $i\in\mathbb{Z}$. After referring to Proposition~\ref{pipeconstruction} to set the notations
\begin{equation}\label{eq:flow:prof}
    \WW_{q+1,k} := \WW_{k,\lambda_{q+1},r_q} \, , \qquad \varrho_{q+1,k}:=\varrho_{k,\lambda_{q+1},r_q} \, ,
\end{equation}
we may then define the principal part of the perturbation
\begin{align}\label{eq:princ}
    w_{q+1}^{(p)} &= {\frac{1}{\lambda_{q+1}}} \sum_{i\in 2\mathbb{Z}} \sum_{k\in \mathcal{K}_0} c_k^{0}\left(R_\ell\right) \eta_{q,i}(t) \rhohalf_{q}(x,t) \left( \nabla \Phi_{q,i}(x,t) \right)^{-1} \WW_{q+1,k} \circ \Phi_{q,i}(x,t)  \notag\\
    &\qquad + {\frac{1}{\lambda_{q+1}}} \sum_{i\in 2\mathbb{Z}+1} \sum_{k\in \mathcal{K}_1} c_k^{1}\left(R_\ell\right) \eta_{q,i}(t) \rhohalf_{q}(x,t) \left( \nabla \Phi_{q,i}(x,t) \right)^{-1} \WW_{q+1,k} \circ \Phi_{q,i}(x,t) \, ,
\end{align}
where as indicated in \eqref{eq:flow:prof}, $\WW_{q+1,k}$ is an intermittent Mikado flow with parameter choices $\xi=k$, $\lambda=\lambda_{q+1}$, $r=r_q=\left(\frac{\lambda_q}{\lambda_{q+1}}\right)^\Gamma$. 
We may also define the divergence corrector by
\begin{align}
     w_{q+1}^{(c)} &= {\frac{1}{\lambda_{q+1}}} \sum_{i\in 2\mathbb{Z}} \sum_{k\in \mathcal{K}_0} \nabla \left(c_k^{0}\left(R_\ell\right) \eta_{q,i}(t) \rhohalf_{q}(x,t)\right) \times \left( \left( \nabla \Phi_{q,i}(x,t) \right)^{T} \ \UU_{q+1,k} \circ \Phi_{q,i}(x,t)  \right) \notag\\
    &\qquad + {\frac{1}{\lambda_{q+1}}} \sum_{i\in 2\mathbb{Z}+1} \sum_{k\in \mathcal{K}_1} \nabla\left(c_k^{1}\left(R_\ell\right) \eta_{q,i}(t) \rhohalf_{q}(x,t)\right) \times \left( \left( \nabla \Phi_{q,i}(x,t) \right)^{T}  \UU_{q+1,k} \circ \Phi_{q,i}(x,t) \right) \, . \notag
\end{align}
We then define the entire perturbation as
\begin{align}
     \label{eq:full:perturbation}
     w_{q + 1} &= w_{q + 1}^{(p)} + w_{q + 1}^{(c)} \notag\\
     &= {\frac{1}{\lambda_{q+1}}} \sum_{i\in\mathbb{Z}}\sum_{k\in \mathcal{K}} \curl \left( c_k\left(R_\ell\right) \eta_{q,i}(t) \rhohalf_{q}(x,t) \left( \nabla \Phi_{q,i}(x,t) \right)^{T}  \UU_{q+1,k} \circ \Phi_{q,i}(x,t)\right) \, ,
\end{align}
where in a slight abuse of notation, we have condensed the separate sums over even and odd integers into a single sum over \emph{all} integers, while suppressing the superscript on $c_k$ and the subscript on $\mathcal{K}$. Notice that by the definition of $\eta_{q,i}$ in \eqref{eq:eta:q:i} and the assumption on the temporal support of $\RR_q$ from \eqref{eq:supp:Rq}, we have that 
\begin{equation}\label{eq:supp:w}
    \supp_t w_{q+1} \subset (t_1-2\tau_q^{-1},t_2+2\tau_q^{-1}) \, .
\end{equation}

\begin{lemma}[Estimates on the perturbation]\label{lem:estimates}
The perturbation $w_{q+1}$ as defined in \eqref{eq:full:perturbation} satisfies the following estimates.
\begin{enumerate}
    \item For $j\geq 1 $, we have that
      {\begin{subequations}
    \begin{align}
      \left\| \rho \right\|_0 \lesssim \delta_{q+1} \lambda_{q+1}^2\ell^{-3} \, ,  &\qquad  \left\|  \nabla_{x,t}^j \rho \right\|_0 \lesssim \delta_{q+1} \lambda_{q+1}^2 \ell^{-4j} \label{eq:rho:bound:one}\\
        \left\| \rhohalf \right\|_0 \lesssim \delta_{q+1}^{\sfrac{1}{2}} \ \lambda_{q+1} \ell^{-2} \, ,  &\qquad \left\| \nabla_{x,t}^j \rhohalf \right\|_0 \lesssim \delta_{q+1}^{\sfrac{1}{2}} \lambda_{q+1}\ell^{- 5j}  \, . 
        \label{eq:rho:bound:two}
    \end{align}
    \end{subequations}}
    \item For $j\geq 1$, we have that 
      {\begin{subequations}
    \begin{align}
    \left\| c_k\left(R_\ell\right)\eta_{q,i}\rhohalf_q \right\|_{L^2} & \leq \frac{1}{2} \delta_{q+1}^{\sfrac{1}{2}} \lambda_{q+1} \, , \label{eq:a:bound:one}\\
    \left\| \nabla_{x,t}^j\left( c_k\left(R_\ell\right)\eta_{q,i}\rhohalf_q \right) \right\|_{0} &\lesssim \delta_{q+1}^{\sfrac{1}{2}} \lambda_{q+1}\ell^{-5j-2} \lesssim \ell^{-5j -2}\, . \label{eq:a:bound:two}
    \end{align}
    \end{subequations}}
    \item For   {$j\geq 0$}, we have that
    \begin{subequations}
    \begin{align}
        \left\| w_{q+1}^{(p)} \right\|_{L^2} + \lambda_{q+1}^{-1} \left\| \nabla w_{q+1}^{(p)} \right\|_{L^2} + \lambda_{q+1}^{-2} \left\| \nabla^2 w_{q+1}^{(p)} \right\|_{L^2} &\leq \frac{1}{2}\delta_{q+1}^{\sfrac{1}{2}} \label{eq:wp} \\
        \left\| w_{q+1}^{(c)} \right\|_{L^2} + \lambda_{q+1}^{-1} \left\| \nabla w_{q+1}^{(c)} \right\|_{L^2} + \lambda_{q+1}^{-2} \left\| \nabla^2 w_{q+1}^{(c)} \right\|_{L^2} &\leq \frac{1}{2}\delta_{q+1}^{\sfrac{1}{2}} \left(\ell^8 \lambda_{q+1}\right)^{-1} \label{eq:wc}\\
        \left\| w_{q+1}^{(p)} \right\|_{\dot{C}^j} + \left\| w_{q+1}^{(c)} \right\|_{\dot{C}^j} &\leq  \lambda_{q+1}^{j+2} \, . \label{eq:wlazy}
    \end{align}
    \end{subequations}
\end{enumerate}
\end{lemma}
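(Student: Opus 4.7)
The plan is to establish the three groups of estimates in the stated order, as each relies on the previous one. The main ingredients are the mollified bounds \eqref{eq:mollified:estimates}, the deformation estimates from Lemma~\ref{lem:deformation}, the smoothness of the coefficients $c_k^n$ from Lemma~\ref{lem:linear:algebra} combined with the pointwise bound $|R_\ell|\leq\varepsilon$ from \eqref{eq:R:rho}, the pipe flow estimates \eqref{e:pipe:estimates:2}, and the $L^p$ decoupling Lemma~\ref{lem:Lp:independence} in a form that accommodates composition with the Lagrangian flows $\Phi_{q,i}$.

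For part (1), since $\chi\geq 1$, $\rho_q$ is bounded below by a fixed positive multiple of $\delta_{q+1}\lambda_{q+1}^2$, so $\rho_q^{\sfrac 12}$ is smooth. The inequality $\chi(z)\lesssim 1+z$ applied with $\|\RR_\ell\|_{L^\infty}\lesssim\ell^{-2}$ from \eqref{eq:mollified:estimates} yields $\|\rho_q\|_0\lesssim \delta_{q+1}\lambda_{q+1}^2+\alpha^{-2}\ell^{-2}$; the second term is absorbed into $\delta_{q+1}\lambda_{q+1}^2\ell^{-3}$ once $a$ is taken large. Higher derivatives of $\rho_q$ and $\rho_q^{\sfrac 12}$ follow from Fa\`a di Bruno applied first to $\chi\circ(\cdot)$ and then to $\sqrt{\cdot}$, using \eqref{eq:mollified:estimates} and the lower bound on $\rho_q$ to prevent blowup; accumulating the $\ell$-losses per derivative gives the $\ell^{-4j}$ and $\ell^{-5j}$ factors in \eqref{eq:rho:bound:two}. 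For part (2), the coefficients $c_k^n(R_\ell)$ depend smoothly on $R_\ell$ by Lemma~\ref{lem:linear:algebra}, so their derivatives are controlled via Fa\`a di Bruno together with the bounds on $R_\ell=\alpha^{-2}\RR_\ell/\rho_q$ coming from part (1). For the $L^2$ estimate \eqref{eq:a:bound:one}, I would combine $\|c_k^n\eta_{q,i}\|_\infty\lesssim 1$ with the $L^1$ bound $\|\rho_q\|_{L^1}\lesssim \delta_{q+1}\lambda_{q+1}^2$, which follows from the second inequality in \eqref{eq:R:rho} together with the inductive assumption \eqref{eq:inductive:stress}; the explicit absolute constants are absorbed by the choice of $\calc_\RR$ and by taking $a$ large. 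The derivative bound \eqref{eq:a:bound:two} follows from the product rule with $|\partial_t^m\eta_{q,i}|\lesssim\tau_q^{-m}=\ell^{-3m}$ and the bounds from part (1).

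For part (3), the $L^2$ estimates on $w^{(p)}_{q+1}$ follow by applying $L^p$ decoupling to the product of the slow factor $c_k^n(R_\ell)\eta_{q,i}\rho_q^{\sfrac 12}(\nabla\Phi_{q,i})^{-1}$ (whose derivatives up to order $\Ndec+4$ are controlled at scale $\ell^{-5}$ by part (2) and Lemma~\ref{lem:deformation}) with the fast factor $\WW_{q+1,k}\circ\Phi_{q,i}$. Since $\Phi_{q,i}$ is volume preserving, $\|\WW_{q+1,k}\circ\Phi_{q,i}\|_{L^2}=\|\WW_{q+1,k}\|_{L^2}\lesssim 1$ by \eqref{e:pipe:estimates:2}. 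Counting powers gives $\|w^{(p)}_{q+1}\|_{L^2}\lesssim \lambda_{q+1}^{-1}\cdot\delta_{q+1}^{\sfrac 12}\lambda_{q+1}\cdot 1=\delta_{q+1}^{\sfrac 12}$, with each spatial derivative on $\WW$ costing $\lambda_{q+1}$ and matching the prefactors in \eqref{eq:wp}; the finite sum over $k\in\mathcal{K}$ and the at most two overlapping cutoffs $\eta_{q,i},\eta_{q,i+1}$ are handled by the choices of $\calc_\RR$ and $a$. For $w^{(c)}_{q+1}$, the $\curl$ in \eqref{eq:full:perturbation} transfers a derivative onto the coefficient (costing $\ell^{-5}$) while the fast factor becomes $\UU_{q+1,k}$ with $\|\UU_{q+1,k}\|_{L^2}\lesssim\lambda_{q+1}^{-1}$, so $w^{(c)}_{q+1}$ carries an extra factor $\ell^{-5}\lambda_{q+1}^{-1}$ relative to $w^{(p)}_{q+1}$; the asserted $(\ell^8\lambda_{q+1})^{-1}$ smallness then reduces to the parameter inequality $\ell\lesssim \delta_{q+1}^{\sfrac 12}\lambda_{q+1}$, i.e.\ $b(\beta-1)\leq 8$, which is ensured by the parameter choices. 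The $\dot C^j$ bounds in \eqref{eq:wlazy} are proved directly by placing all derivatives on $\WW$ (the worst case): this costs $\lambda_{q+1}^j r_q^{-1}$ times the $\ell^{-2}$ coefficient bound, giving $\lambda_{q+1}^{j+2}$ once $r_q^{-1}\ell^{-2}\leq \lambda_{q+1}^2$, which again follows from the parameter choices.

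The main technical obstacle will be the verification of the decoupling hypothesis \eqref{eq:Lp:independence:assumption} in the presence of the deformation $\Phi_{q,i}$, since $\WW_{q+1,k}\circ\Phi_{q,i}$ is only \emph{approximately} periodic at scale $(\lambda_{q+1}r_q)^{-1}$. I would handle this by pulling the slow factor back by $X_{q,i}$: since $\Phi_{q,i}$ is volume preserving the Jacobian is one, and the deformation bounds \eqref{eq:flow:one}--\eqref{eq:flow:two} ensure that derivatives of the pulled-back coefficient remain controlled at the same scale $\ell^{-5}$. After this change of variables $\WW_{q+1,k}$ is exactly periodic, so Lemma~\ref{lem:Lp:independence} applies and the decoupling hypothesis reduces to $(\ell^{-5})^{\Ndec+4}\leq (\lambda_{q+1}r_q)^{\Ndec}$, which holds for large $b$ and large $\Ndec$.
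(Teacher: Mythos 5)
Your proposal follows essentially the same route as the paper's proof: bound $\rho_q$ and $\rho_q^{\sfrac 12}$ via composition estimates and the strict lower bound on $\rho_q$; get \eqref{eq:a:bound:one} from the $L^1$ bound on $\rho_q$ via \eqref{eq:R:rho} and the inductive assumption; and for the $L^2$ bounds on $w_{q+1}^{(p)}$ and $w_{q+1}^{(c)}$, pull back by the (volume-preserving) $X_{q,i}$ so that the Mikado flow is genuinely $(\T/\lambda_{q+1}r_q)^3$-periodic, then invoke Lemma~\ref{lem:Lp:independence}. The only cosmetic difference is that the paper inflates the derivative cost on the slow factor to $\ell^{-8}$ (rather than your $\ell^{-5}$) so as to have an extra power of $\ell$ to absorb dimensional constants before applying the decoupling lemma — and correspondingly the parameter inequality you single out for \eqref{eq:wc} is not actually needed, since $\ell^{-5}\leq\tfrac 12 \ell^{-8}$ holds trivially once $a$ is large — but neither point affects the soundness of your argument.
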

\begin{proof}
\eqref{eq:rho:bound:one} and \eqref{eq:rho:bound:two} follow from Young's inequality, \eqref{eq:inductive}, and standard H\"older estimates for compositions, cf. \cite{BDLSV17} Appendix A.1.  \eqref{eq:a:bound:one} follows from \eqref{eq:inductive}, \eqref{eq:R:rho}, \eqref{eq:eta:q:i:props},  the {definition of $c_j(R_\ell)$ as a smooth bounded function}, and a sufficiently small choice of $\calc_\RR$. \eqref{eq:a:bound:two} follows from {$\tau = \ell^3 $},  \eqref{eq:rho:bound:one},  \eqref{eq:rho:bound:two}, standard H\"older estimates, and {$\ell \leq \lambda_{q+1}^2 \delta_{q+1}$}, which is inequality \eqref{ineq:two}.

To prove \eqref{eq:wp} we fix an arbitrary time $t_0 \in \supp_t w_{q+1}$. At $t_0$, at most two of the $\eta_i$ are nonzero, so it is enough to focus on bounding each term in the sum over $i$ in \eqref{eq:princ} individually.  We note that from \eqref{eq:a:bound:two}, \eqref{eq:flow:one} \eqref{eq:flow:two}, \eqref{e:pipe:estimates:2}, the chain and product rule, and the ordering {$\ell^{-12} \ll \lambda_{q+1}$} from inequality \eqref{ineq:three}, it is enough to consider the case where the derivatives all land on $\WW \circ \Phi_{q, i}$. Before we can apply Lemma \ref{lem:Lp:independence}, we perform the change of variables $y := \Phi_{q,i}(x)$: doing so converts $\nabla^j\WW \circ \Phi_{q, i}$ to $\nabla^j \WW$ which is a genuinely $\left(\mathbb{T}/ (\lambda_{q+1}r_q)\right)^3 $ periodic function. Then we let $g = \nabla^j \WW$ and $f$ be the product of {$\lambda_{q+1}^{-1}$},   $c_k^{i \mod 2}\left(R_\ell\right) \eta_{q,i}(t) \rhohalf_{q}(x,t) \left( \nabla \Phi_{q,i} \right)^{-1}$ , $j$ copies of $\nabla \Phi_{q,i}$ (all composed with $X_{q,i}$) , and the Jacobian $\det \nabla X_{q,i}(y)$. Using \eqref{eq:a:bound:one}, \eqref{eq:a:bound:two}, Lemma \ref{lem:deformation}, and  H\"older estimates for compositions, we have that 
\begin{equation}
  \ell^{8j}  \| D^j f \|_0 \leq {c} \delta_{q+1}^{\sfrac{1}{2}} \, , \notag
\end{equation}
where ${c}$ is sufficiently small enough to absorb any dimensional constants; note that this is allowed because we have {used} an extra factor of $\ell$, and we may assume $\calc_\RR$ from \eqref{eq:inductive} is sufficiently small. From {the parameter inequality \eqref{ineq:four}, which states that
\begin{equation}
      \ell^{-8(\Ndec + 4 )} \leq \lambda_{q+1}^{\Ndec(1 - \Gamma - \frac{\Gamma}{b} )} \, , \notag
\end{equation}}
we may thus apply Lemma~\ref{lem:Lp:independence} with $\mu=\lambda_{q+1}r_q$ and $\lambda=\ell^{-8}$, producing \eqref{eq:wp}.

To prove $\eqref{eq:wc}$ we once again use the fact that it is enough to consider all the derivatives landing on $\UU_{\lambda_{q+1},\xi,r_q}$, where $\UU$ is the potential defined in Proposition~\ref{pipeconstruction}. In addition, by  \eqref{eq:a:bound:two} and  \eqref{item:pipe:5} from Proposition \ref{pipeconstruction} and a similar application of Lemma~\ref{lem:Lp:independence}, the bound \eqref{eq:wc} follows. Finally, to prove \eqref{eq:wlazy} we simply use \eqref{item:pipe:5} from Proposition \ref{pipeconstruction} and inequality~\eqref{ineq:five}, which implies that {$r_q^{-1} \ll \lambda_{q+1}$.}
\end{proof}

\subsection{The new equation at level \texorpdfstring{{$q+1$}}{qplusone}}

We define 
\begin{equation}
    \label{eq:u:q+1}
    u_{q + 1} :=  u_{\ell} + w_{q+1}.
\end{equation}
Adding $w_{q+1}$ to \eqref{eq:velocity:mollified:equation:statement}, we find that $u_{q+1}$ satisfies
\begin{align}
    \partial_t(u_{q+1} - &\alpha^2 \Delta u_{q+1} )^l + \partial_k\left( u_{q+1}^k (u_{q+1}^l - \alpha^2 \Delta u_{q+1}^l) - \alpha^2 \partial_k u_{q+1}^j \partial_l u_{q+1}^j  \right) + \partial_l p_\ell \notag\\
    &= \partial_k \RR_{\text{comm}}^{kl} \notag\\
    &\quad +\partial_k \left( \RR_\ell^{kl} + w_{q+1}^k\left(w_{q+1}^l-\alpha^2\Delta w_{q+1}^l\right) - \alpha^2 \partial_k w_{q+1}^j \partial_l w_{q+1}^j \right)\label{eq:osc}\\
    &\quad +\partial_t (w_{q + 1} - \alpha^2 \Delta w_{q+1})^l  + u_{\ell} \cdot \nabla (w_{q + 1} - \alpha^2 \Delta w_{q+1})^l \label{eq:transport}\\
    &\quad + \partial_k \left(w_{q+1}^k \left(u_\ell^l - \alpha^2 \Delta u_\ell^l\right) -\alpha^2 \partial_k u_\ell^j \partial_l w_{q+1}^j - \alpha^2 \partial_k w_{q+1}^j\partial_l u_\ell^j \right) \label{eq:Nash} \\
    &=:  \partial_k \RR_{\text{comm}}^{kl} + \partial_k\RR_{\text{osc}}^{kl} \, + \, \partial_k\RR_{\text{transport}}^{kl} \, + \, \partial_k \RR_{\text{Nash}}^{kl} \, .
\end{align}
{We do not need to address the commutator stress, and so we analyze the three main error terms in the next sections.}

\section{Stress Estimates}
With Proposition~\ref{prop:Celtics:suck} and Proposition~\ref{prop:weak:decoupling}, we are now ready to estimate the remaining stress terms from \eqref{eq:osc}-\eqref{eq:Nash}. In what follows we will use that $\mathcal{R} \circ \Div$ is bounded on $L^p$ for $p \in (1, \infty)$. In particular, $\mathcal{R} \circ \Div$ is \emph{not} bounded on $L^1$. To circumvent this we will choose a $p$ slightly larger than 1 and bound the stress in $L^p$ by interpolating between $L^1$ and $L^{\infty}$. This will be accounted for by a  parameter $\gamma$ satisfying $0 < \gamma \ll 1$ appearing as a power on $\lambda_{q+1}$.

\subsection{Oscillation Error}
The oscillation error was given in \eqref{eq:osc} by
$$ \partial_k\RR_{\text{osc}}^{kl} := \partial_k \left( \RR_\ell^{kl} + w_{q+1}^k\left(w_{q+1}^l-\alpha^2\Delta w_{q+1}^l\right) - \alpha^2 \partial_k w_{q+1}^j \partial_l w_{q+1}^j \right) \, .  $$
Eschewing index notation for the time being, we can split this sum as
\begin{align}
   \Div&\left( w_{q+1}\otimes w_{q+1} - \alpha^2 \left( w_{q+1}^{(c)}\otimes \Delta w_{q+1}^{(c)} + w_{q+1}^{(c)}\otimes \Delta w_{q+1}^{(p)} + w_{q+1}^{(p)}\otimes \Delta w_{q+1}^{(c)} \right) \right) \notag\\
   & \quad -\alpha^2 \Div \left(  \left(\nabla w_{q+1}^{(c)}\right)^T\nabla w_{q+1}^{(c)} + \left(\nabla w_{q+1}^{(p)}\right)^T\nabla w_{q+1}^{(c)} + \left(\nabla w_{q+1}^{(c)}\right)^T\nabla w_{q+1}^{(p)} \right) \notag\\
   &\quad - \alpha^2 \Div \left( w_{q+1}^{(p)}\otimes \Delta w_{q+1}^{(p)} + \left(\nabla w_{q+1}^{(p)}\right)^T\nabla w_{q+1}^{(p)} - \alpha^{-2}\RR_\ell \right) \, . \label{eq:hailstorm:1}
\end{align}
We estimate now the first two lines of the above display, which are all of lower order. We may bound $w_{q+1}\otimes w_{q+1}$ in $L^1$ by $\delta_{q+1}$ after appealing to \eqref{eq:wp} and \eqref{eq:wc} and in $L^\infty$ by $\lambda_{q+1}^2$. The subsequent terms on the first line, which we abbreviate $R_{\textnormal{corr},1}$ may be bounded in $L^1$ by $\delta_{q+1}\lambda_{q+1}^2\left(\ell^{8}\lambda_{q+1}\right)^{-1}$ after appealing to the higher order bounds in \eqref{eq:wp} and \eqref{eq:wc}, and in $L^\infty$ by $\lambda_{q+1}^6$ using \eqref{eq:wlazy}. The terms on the second line, which we abbreviate by $R_{\textnormal{corr},2}$, all obey analogous $L^1$ and $L^\infty$ bounds compared to the terms in $R_{\textnormal{corr},1}$. We obtain that
\begin{align}
    \left\| \mathcal{R} \circ \Div \left( w_{q+1}\otimes w_{q+1} + R_{\textnormal{corr},1} + R_{\textnormal{corr},2} \right) \right\|_{L^1} &= \left\| \mathcal{R} \circ \Div \left( \mathbb{P}_{\neq 0} \left( w_{q+1}\otimes w_{q+1} + R_{\textnormal{corr},1} + R_{\textnormal{corr},2}  \right) \right) \right\|_{L^1} \notag\\
    &\lesssim \delta_{q+1}\lambda_{q+1}^{\gamma} + \delta_{q+1}\lambda_{q+1}^{2+\gamma} \left(\ell \lambda_{q+1} \right)^{-1}\notag\\
    &\leq {\lambda_{q+2}^{2-\gamma}\delta_{q+2}}\, . \label{eq:corr:bound:one}
\end{align}
Therefore these errors are symmetric traceless stresses of the proper magnitude and may be absorbed into the new stress $\RR_{q+1}$. Note that in the last line, we appealed to inequality~\eqref{ineq:six}.

For the remainder of this section, we focus on the analysis of the third line
\begin{equation}
    \alpha^2 \Div \left( w_{q+1}^{(p)}\otimes \Delta w_{q+1}^{(p)} + \left(\nabla w_{q+1}^{(p)}\right)^T\nabla w_{q+1}^{(p)} - \alpha^{-2}\RR_\ell \right) \label{eq:third:redux} 
\end{equation}
from \eqref{eq:hailstorm:1}. Identical arguments to those which produced the bounds \eqref{eq:wc} imply that when the differential operators $\Delta$ or $\nabla$ from \eqref{eq:third:redux} land on the low-frequency coefficient functions or flow maps in the definition of $w_{q+1}^{(p)}$ in \eqref{eq:princ}, the resulting terms may be bounded by  \eqref{eq:corr:bound:one}. So we may immediately absorb these terms into $\RR_{q+1}$ and focus on the terms in which the differential operator falls on the high-frequency objects.

At this point we revert back to index notation and index the tensors by $\theta$ and $l$ rather than $k$ and $l$ (to avoid the overload of notation with vector directions $k$) and ignore the prefactor $\alpha^2$ in front of the last line of \eqref{eq:third:redux}.  Utilizing the notation from \eqref{eq:flow:prof} and the identities
\begin{align}
    \partial_m \left( \WW_{q+1,k}\circ\Phi\qi \right) &= k \partial_n \varrho_{q+1,k}(\Phi\qi) \partial_m \Phi\qi^n \notag\\
    \partial_{mm}\left(\WW_{q+1,k}\circ \Phi\qi \right) &= k\partial_{np} \varrho_{q+1,k}(\Phi\qi) \partial_m \Phi\qi^n \partial_m \Phi\qi^p + k\partial_n \varrho_{q+1,k}(\Phi\qi) \partial_{mm} \Phi\qi^n \, , \label{eq:shorthands}
\end{align}
we find that we must bound (after throwing away the lower order term from the Laplacian in \eqref{eq:shorthands})
\begin{align}
    &\frac{\partial_\theta}{\lambda_{{q+1}}^2} \left( \sum_{\substack{i,i'\in\mathbb{Z} \\ k\in\mathcal{K}_{i\, \textnormal{mod}\, 2} \\ k'\in\mathcal{K}_{i'\, \textnormal{mod}\, 2}}} \left(c_k c_{k'} \eta_{q,i} \eta_{q,i'} \rho_{q}^{\sfrac{1}{2}} \right)^2 \left(\nabla\Phi_{q,i}\right)^{-1}_{\beta\theta} k^\beta \varrho_{q+1,k}(\Phi\qi) \left(\nabla\Phi\qiprime\right)^{-1}_{jl}k^j \partial_{np}\varrho_{q+1,k'}(\Phi\qiprime) \partial_m \Phi\qiprime^n \partial_m \Phi\qiprime^p \right) \label{eq:osc:hail:1} \\
    &+ \frac{\partial_\theta}{\lambda_{{q+1}}^2} \left( \sum_{\substack{i,i'\in\mathbb{Z} \\ k\in\mathcal{K}_{i\, \textnormal{mod}\, 2} \\ k'\in\mathcal{K}_{i'\, \textnormal{mod}\, 2}}} \left(c_k c_{k'} \eta_{q,i} \eta_{q,i'} \rho_{q}^{\sfrac{1}{2}} \right)^2 \left(\nabla\Phi_{q,i}\right)^{-1}_{\beta j} k^\beta \partial_n\varrho_{q+1,k}(\Phi\qi) \partial_\theta\Phi\qi^n \left(\nabla\Phi\qiprime\right)^{-1}_{mj}k^m \partial_{p}\varrho_{q+1,k'}(\Phi\qiprime) \partial_l \Phi\qiprime^p \right) \,  \label{eq:osc:hail:two}\\
    &\qquad \qquad \qquad \qquad \qquad \qquad - \alpha^{-2} \partial_\theta \RR_\ell^{\theta l} \, . \notag
\end{align}
By the properties of the time cutoffs $\eta\qi$, the only nonzero terms in the above sum occur when $|i-i'|\leq 1$. The Type 1 oscillation error is the standard oscillation error in most convex integration schemes and is precisely the sum of $\alpha^{-2}\partial_\theta\RR_{\ell}^{\theta l}$ and the terms in the above sums in which $i=i'$; for such terms, we have that all terms besides those with $k=k'$ vanish, since Mikado flows which belong to the same set $\mathcal{K}_i$ but have different vector directions may be taken to be disjoint.\footnote{In two dimensions, these error terms do \emph{not} vanish but may be analyzed using Proposition~\ref{prop:weak:decoupling} in the same way as the terms for which $i\neq i'$.} These terms will be analyzed using Proposition~\ref{prop:intermittent:inverse:div}.  The Type 2 oscillation errors are the terms in the above sums for which $i\neq i'$; these terms will be analyzed using Proposition~\ref{prop:weak:decoupling}.

\subsubsection{Type 1 oscillation errors}
We first analyze the terms in \eqref{eq:osc:hail:1} for which $i=i'$, which (after a slight obfuscation of notation in which we denote the sets of vector directions with $\mathcal{K}$) may be written as
\begin{align} \notag
   \lambda_{q+1}^{-2}\partial_\theta \left( \sum_{i\in\mathbb{Z},k\in\mathcal{K}} c_k^2 \eta_{q,i}^2 \rho_{q} \left(\nabla\Phi_{q,i}\right)^{-1}_{\beta\theta} k^\beta \varrho_{q+1,k}(\Phi_{q,i}) \left(\nabla\Phi_{q,i}\right)^{-1}_{jl}k^j \partial_{np}\varrho_{q+1,k}(\Phi_{q,i}) \partial_m \Phi_{q,i}^n \partial_m \Phi_{q,i}^p \right) \, .
\end{align}
In order to simplify notation further in the second term above, for the time being we omit all subscripts $q$, $i$, and $k$ on the derivative matrices $\nabla\Phi\qi$ and flow profiles $\varrho_{q+1,k}$, and we will abbreviate $c_k^2 \eta_{q,i}^2 \rho_q$ with simply $a^2$. With these conventions, the above display may be rewritten as
\begin{align}
    \lambda_{q+1}^{-2}\partial_\theta \sum_{i,k} a^2 &\bigg{[} k^\theta k^l \varrho(\Phi) \partial_{mm} \varrho(\Phi) \label{eq:canc:hailstorm:one}\\
    &\quad + k^\beta \left( (\nabla\Phi)_{\beta\theta}^{-1} - \delta_{\theta\beta} \right) k^l \varrho(\Phi) \partial_{mm} \varrho(\Phi)\notag\\
    &\quad + k^\beta \left( \nabla\Phi\right)^{-1}_{\beta\theta}\left((\nabla\Phi)_{jl}^{-1}-\delta_{jl}\right)k^j \varrho(\Phi) \partial_{mm} \varrho(\Phi) \notag\\
    &\quad + k^\beta \left( \nabla\Phi\right)^{-1}_{\beta\theta}(\nabla\Phi)_{jl}^{-1} k^j \left( \partial_m \Phi^n - \delta_{mn} \right) \varrho(\Phi) \partial_{nm} \varrho(\Phi) \notag\\
     &\quad + k^\beta \left( \nabla\Phi\right)^{-1}_{\beta\theta}(\nabla\Phi)_{jl}^{-1} k^j \partial_m \Phi^n \left(\partial_m \Phi^p - \delta_{mp} \right) \varrho(\Phi) \partial_{np} \varrho(\Phi) \bigg{]} \, . \notag 
\end{align}
The first term above will be kept and used to cancel the stress. We must analyze the second through fifth terms.  The arguments for each are quite similar, and so we estimate the second and leave further details to the reader. Ignoring the $\partial_\theta$ but holding onto the rescaling $\lambda_{q+1}^{-2}$ for the time being, we recall \eqref{eq:flow:one}, \eqref{eq:a:bound:one}, \eqref{e:pipe:estimates:1} with $\lambda=\lambda_{q+1}$, $p=2,\infty$ and $r=\left(\frac{\lambda_q}{\lambda_{q+1}}\right)^\Gamma\geq \lambda_{q+1}^{-1}$, and \eqref{eq:a:bound:two}.  Then applying Lemma~\ref{lem:Lp:independence} in the same way that produced the bounds \eqref{eq:wp}, we may estimate the second term in $L^1$ by
\begin{align}\label{eq:hailstorm:est:one}
    \lambda_{q+1}^{-2}\left\| a^2 \left(\nabla\Phi^{-1}-\Id\right) \varrho(\Phi) \partial_{mm}\varrho(\Phi) \right\|_{L^1} \lesssim \lambda_{q+1}^{-2}\left\| a^2 \right\|_{L^1} \left\| \nabla\Phi^{-1} - \textnormal{Id} \right\|_{0} \left\| \varrho \Delta \varrho \right\|_{L^1} &\lesssim \ell \delta_{q+1} \lambda_{q+1}^2 \, ,
\end{align}
and in $L^\infty$ by
\begin{align}\label{eq:hailstorm:est:one:alt}
    \left\| a^2 \right\|_{0} \left\| \nabla\Phi - \textnormal{Id} \right\|_{0} \left\| \varrho \Delta \varrho \right\|_{0} \lesssim \ell^{-4} \ell \lambda_{q+1}^4 \, .
\end{align}
Applying $\mathcal{R}\circ\partial_\theta\circ \Pneq$, we may absorb this error term into $\RR_{q+1}$ after appealing to the parameter inequality~\eqref{ineq:seven}, which implies that
\begin{equation}\label{eq:telly:good}
   { \ell \delta_{q+1} \lambda_{q+1}^{2+\gamma} \leq \delta_{q+2} \lambda_{q+2}^{2-\gamma} \, .}
\end{equation}

Returning to \eqref{eq:canc:hailstorm:one}, we may rewrite the first line as
\begin{align}\label{eq:set:aside}
    \lambda_{q+1}^{-2}\partial_\theta \sum_{i,k} a^2 &\left( k^\theta k^l \Pneq \left( \varrho \partial_{mm} \varrho \right)(\Phi) + k^\theta k^l \Peq \left( \varrho \partial_{mm} \varrho \right) \right) \, .
\end{align}
The second portion of this term will be used to cancel the stress, and the first part is an error itself.  In order to analyze it, we have to telescope back to a term which has the correct factors of $\nabla\Phi$ so that we again have an approximately stationary solution to the equations.  Thus we rewrite the first term from the above display as
\begin{align}
    \lambda_{q+1}^{-2}\partial_\theta \sum_{i,k} a^2 &\bigg{[} k^\beta \left( \delta_{\theta\beta} - (\nabla\Phi)^{-1}_{\beta\theta} \right) k^l \Pneq \left( \varrho \partial_{mm} \varrho \right)(\Phi) \notag\\
    &\quad + k^\beta (\nabla\Phi)^{-1}_{\beta\theta} k^j\left(\delta_{jl} - (\nabla\Phi)^{-1}_{jl}\right)\Pneq \left( \varrho \partial_{mm} \varrho \right)(\Phi) \notag\\
    &\quad + k^\beta (\nabla\Phi)^{-1}_{\beta\theta} k^j (\nabla\Phi)^{-1}_{jl}  \Pneq \left( \varrho \partial_{mm} \varrho \right)(\Phi) \bigg{]} \, . \label{eq:hailstorm:telly}
\end{align}
The first and second of these terms are estimated in precisely the same fashion as the terms estimated in \eqref{eq:hailstorm:est:one}, and we omit further details. In the third term, the product rule gives that $\partial_\theta$ lands either on the low-frequency objects $a^2$ or $\nabla\Phi$ or the high-frequency, mean-zero object $\Pneq(\varrho\partial_{mm})$. When $\partial_\theta$ lands on the low-frequency portion, we then must estimate
\begin{equation}\label{eq:hail:low}
    \lambda_{q+1}^{-2}\sum_{i,k}\partial_\theta\left(a^2 k^\beta (\nabla\Phi)_{\beta\theta}^{-1} k^j (\nabla\Phi)^{-1}_{jl} \right) \Pneq\left(\varrho\partial_{mm}\varrho\right)(\Phi) \, .
\end{equation}
We apply the inverse divergence operator from Proposition~\ref{prop:intermittent:inverse:div} with the choices 
\begin{align}
v=u_\ell\,, \qquad G^l=\partial_\theta\left(a^2 k^\beta (\nabla\Phi)_{\beta\theta}^{-1} k^j (\nabla\Phi)^{-1}_{jl} \right)\,, \qquad\calc_G=\ell^{-9}\,, \qquad\lambda=\ell^{-5}\,, \qquad\Phi=\Phi\qi\,,\notag\\
\qquad\varrho=\Pneq\left(\varrho_{q+1,k}\partial_{mm}\varrho_{q+1,k}\right)\,, \qquad\vartheta=\left(\lambda_{q+1}r_q\right)^{2\dpot}\Delta^{-\dpot}\varrho\,,\notag\\
\qquad \zeta=\mu=(\lambda_{q+1}r_q)\,, \qquad \Lambda=\lambda_{q+1}\,, \qquad {\calc_*=\lambda_{q+1}^2}\, . \label{eq:choicez}
\end{align}
Then \eqref{eq:a:bound:two} shows that \eqref{eq:inverse:div:DN:G} is satisfied, \eqref{eq:flow:one} shows that \eqref{eq:diff:bounds} is satisfied, \eqref{eq:flow:two} shows that \eqref{eq:DDpsi} is satisfied, item~\eqref{item:inverse:i} is satisfied by the $\frac{\T^3}{\lambda_{q+1}r_q}$-periodicity guaranteed by item~\eqref{item:pipe:5} from Proposition~\ref{pipeconstruction}, item~\eqref{item:inverse:ii} and \eqref{item:inverse:iii} are satisfied by item~\eqref{item:pipe:5} from Proposition~\ref{pipeconstruction}, and \eqref{eq:exchange} is satisfied by {the parameter inequality~\eqref{ineq:eight}, which states that
\begin{equation}\notag
\left(\frac{\ell^{-5}}{\lambda_{q+1}r_q}\right)^\dpot \ell^{-4} \leq (\lambda_{q+1}r_q)^{-1}   \, .
\end{equation}
}
Since we may assume that all terms inside the operator $\partial_\theta$ in \eqref{eq:hailstorm:telly} have zero mean, and we shall see shortly that the complementary term to \eqref{eq:hail:low} in which the $\partial_\theta$ lands on the high-frequency object vanishes, the error term we are currently estimating may be taken to have zero mean.  Then applying \eqref{eq:inverse:div} and \eqref{eq:inverse:div:stress:1}, we have rewritten this error term as the sum of the divergence of a traceless symmetric stress $\RR$ and gradient of a pressure $P$ which obey the $L^1$ bounds (recall the factor of $\lambda_{q+1}^{-2}$ in front of the expression in \eqref{eq:hail:low})
{\begin{equation}\label{eq:div:good}
    \left\| \RR \right\|_{L^1} + \left\| P \right\|_{L^1} \lesssim \lambda_{q+1}^{-2} \calc_G \lambda^{4} \calc_* \zeta^{-1} = \lambda_{q+1}^{-2} \ell^{-9} \ell^{-20} \lambda_{q+1}^2 \left(\lambda_{q+1}r_q\right)^{-1} \leq \delta_{q+2} \lambda_{q+2}^{2-\gamma} \, .
\end{equation}}
In the last inequality, we have used inequality~\eqref{ineq:ten}.

We now must calculate what happens in \eqref{eq:hailstorm:telly} when the $\partial_\theta$ lands on the high frequency portion. By the product rule, we have two terms, which we may write out as
\begin{align}
    k^\beta (\nabla\Phi)^{-1}_{\beta\theta} k^j (\nabla\Phi)^{-1}_{jl}\Pneq \left( \partial_n\varrho \partial_{mm} \varrho \right)(\Phi) \partial_\theta \Phi^n &= k^n k^j (\nabla\Phi)_{jl}^{-1} \partial_n \varrho(\Phi) \partial_{mm}\varrho(\Phi) = 0 \, . \notag
\end{align}
and
\begin{align}\notag
    k^\beta (\nabla\Phi)^{-1}_{\beta\theta} k^j (\nabla\Phi)^{-1}_{jl}\Pneq \left( \varrho \partial_{nmm} \varrho \right)(\Phi) \partial_\theta \Phi^n &= k^n k^j (\nabla\Phi)_{jl}^{-1} \varrho(\Phi) \partial_{nmm}\varrho(\Phi) = 0 \, ,
\end{align}
{To conclude that these terms vanish, we have used that {$k^n \partial_n (D^N\varrho)=0$} for all $N\geq 0$, which follows from \eqref{item:pipe:1} of Proposition~\ref{pipeconstruction}.}  This concludes the analysis of the terms in \eqref{eq:osc:hail:1}, save for the low frequency terms which we set aside in \eqref{eq:set:aside} to cancel the stress later.

We now return to \eqref{eq:osc:hail:two} and handle the terms for which $i=i'$, which after adopting the same abbreviations as in the analysis of \eqref{eq:osc:hail:1} are written as
\begin{align}
    \lambda_{q+1}^{-2}\partial_\theta &\left[ \sum_{i,k} a^2 (\nabla\Phi)^{-1}_{\beta j}k^\beta \partial_n \varrho(\Phi) \partial_\theta \Phi^n (\nabla\Phi)^{-1}_{mj} k^m \partial_p \varrho(\Phi) \partial_l \Phi^p \right] \notag\\
    &= \lambda_{q+1}^{-2}\partial_\theta \left[ \sum_{i,k} a^2 \partial_\theta \varrho(\Phi) \partial_l\varrho(\Phi) \right] + T \, , \notag
\end{align}
where $T$ is a sum of telescoping terms exactly analogous to those in \eqref{eq:canc:hailstorm:one} and is estimated in exactly the same way, and we omit further details. Focusing on the first term after the equality in the above display, we again set aside the low-frequency portion to cancel the stress.  Then the high-frequency portion of this term is given by 
\begin{align}
    \lambda_{q+1}^{-2}\partial_\theta \left( \sum_{i,k} a^2 \Pneq \left( \partial_\theta \varrho \partial_l \varrho \right)(\Phi) \right) \, . \notag
\end{align}
Again by the product rule, the derivative may land on low-frequency objects as in \eqref{eq:hail:low}, and this term is fed into the inverse divergence exactly as before. We omit further details, as the only differences are that the high-frequency object $\Pneq(\varrho\partial_{mm}\varrho)$ from \eqref{eq:hail:low} has been replaced with $\Pneq(\partial_\theta\varrho\partial_l \varrho)$, and a few indices have been shifted in the definition of the low frequency object $G^l$ from \eqref{eq:choicez}.

The relatively more troublesome term arises when $\partial_\theta$ lands on the high-frequency portion $\Pneq \left( \partial_\theta \varrho \partial_l \varrho \right)(\Phi)$. We analyze this term as follows.  Since radial Mikado flows are stationary solution of the Euler-$\alpha$ equations (with pressure calculated in \eqref{eq:pressure:formula}), we have that
\begin{align}\notag
    \partial_\theta \left( \Pneq \left(\partial_\theta\varrho\partial_l\varrho\right) \right) = \partial_l P \, ,
\end{align}
where $P$ is a mean-zero pressure. We want to show that
\begin{align}\label{eq:hailstorm:simple}
    \partial_\theta \left( a^2 T_{\theta l} (\Phi) \right) = \partial_l \tilde P + \partial_\theta (\RR_{\theta l})
\end{align}
where $T_{\theta l} := \Pneq \left(\partial_\theta\varrho\partial_l\varrho\right)$. We now highlight the properties of $a^2$ and $T_{\theta l}$ which are analogous to those in \eqref{eq:choicez}, which later allow applications of the inverse divergence operator with choices essentially identical to those of \eqref{eq:choicez}.  The mean-zero tensor $T_{\theta l}$ is periodic to scale $\left(\lambda_{q+1}r_q\right)^{-1}$ and given by the iterated Laplacian of a potential (similar to the choices of $\varrho$ and $\vartheta$ in the analysis of \eqref{eq:hail:low}), has derivatives which cost $\Lambda=\lambda_{q+1}$, satisfies $\partial_\theta (T_{\theta l})=\partial_l P$, and has $L^1$ norm comparable to $\lambda_{q+1}^2$ (just as in the choice for $\calc_*$ from \eqref{eq:hail:low}). Note that by virtue of the equality $\partial_\theta T_{\theta l} = \partial_l P$ and the $L^1$ norm of $T_{\theta l}$, we have that
\begin{align}\label{eq:L1:p}
    \left\| P \right\|_{L^1} = \left\| \Delta^{-1}\partial_{ll} P \right\|_{L^1} =  \left\| \Delta^{-1}\partial_{l \theta} T_{\theta l} \right\|_{L^1} \leq \lambda_{q+1}^{2+\gamma} \, .
\end{align}
 The diffeomorphism $\Phi=\Phi\qi$ in \eqref{eq:hailstorm:simple} is the usual one, and the low-frequency coefficient function $a^2$ has $L^1$ norm given by a power of $\ell^{-1}$, and each subsequent derivative costs $\ell^{-5}$ (exactly like the term $G^l$ in the analysis of \eqref{eq:hail:low}).  The tensor $\RR_{\theta l}$ in \eqref{eq:hailstorm:simple} will be symmetric and traceless and, from the assumptions on $a^2$ and $T_{\theta l}$, satisfy estimates which allow it to be absorbed into $\RR_{q+1}$.

Feeding the term in \eqref{eq:hailstorm:simple} in which $\partial_\theta$ lands on the low frequency function $a$ into the inverse divergence from Proposition~\ref{prop:Celtics:suck} to obtain an error term $\RR_{\textnormal{low}}^{\theta l}$ and pressure $P_{\textnormal{low}}$, we have that
\begin{align}
    \partial_\theta \left( a^2 T_{\theta l}(\Phi) \right) &= \partial_\theta(a^2) T_{\theta l} (\Phi) + a^2 \partial_\theta \left(T_{\theta l}(\Phi)\right) \notag\\
    &=\partial_\theta \RR_\textnormal{low}^{\theta l} + \partial_l P_\textnormal{low} + \fint_{\T^3} \partial_\theta(a^2) T_{\theta l}(\Phi) + a^2 \partial_m T_{\theta l}(\Phi) \partial_\theta \Phi^m \notag\\
    &= \partial_\theta \RR_\textnormal{low}^{\theta l} + \partial_l P_\textnormal{low}+ \fint_{\T^3} \partial_\theta(a^2) T_{\theta l}(\Phi) + a^2 \partial_m T_{\theta l}(\Phi) \left( \partial_\theta \Phi^m - \delta_{\theta m} \right) + a^2 \left( \partial_\theta T_{\theta l} \right)(\Phi) \, . \label{eq:hailstorm:damnit}
\end{align}
The first two terms on the second line, which come from the application of the inverse divergence, are estimated entirely analogously to how \eqref{eq:hail:low} was estimated and satisfy analogous bounds due to the properties of $a$, $T_{\theta l}$, and $\Phi$. The third term, which contains the mean, will cancel with the means resulting from subsequent applications of the inverse divergence to the remaining terms, since the left-hand side of \eqref{eq:hailstorm:damnit} is zero mean. Therefore, it suffices to show these two terms are sufficiently small. The fourth term in the above display may be rewritten using the Piola identity and differentiation by parts as
\begin{align}
    a^2 \partial_m T_{\theta l}(\Phi)(\partial_\theta \Phi^m - \delta_{\theta m}) &= a^2 (\nabla\Phi)^{-1}_{mn} \partial_n \left( T_{\theta l}(\Phi) \right)(\partial_\theta \Phi^m - \delta_{\theta m}) \notag\\
    &=  \partial_n \left(  a^2 (\nabla\Phi)^{-1}_{mn}  T_{\theta l}(\Phi) (\partial_\theta \Phi^m - \delta_{\theta m}) \right) - \partial_n \left( a^2 (\nabla\Phi)^{-1}_{mn}   (\partial_\theta \Phi^m - \delta_{\theta m}) \right) T_{\theta l}(\Phi) \, . \label{eq:above}
\end{align}
The first term in \eqref{eq:above} is the divergence of a tensor indexed by $n$ and $l$ and satisfies bounds identical to those in \eqref{eq:hailstorm:est:one}, although it is not symmetric and traceless. Applying the same methodology that converted the telescoping term in \eqref{eq:canc:hailstorm:one} into a symmetric traceless stress, we see that this term may be absorbed into $\RR_{q+1}$ {with a bound given by \eqref{eq:telly:good}}.  The second term is of the exact same form as the term in \eqref{eq:hail:low} and may be fed into the inverse divergence from Proposition~\ref{prop:Celtics:suck} to produce bounds identical to those in \eqref{eq:div:good}.

The final term on the second line of \eqref{eq:hailstorm:damnit} may be rewritten using the Piola identity and differentiation by parts as
\begin{align}
    a^2 \left( \partial_\theta T_{\theta l} \right)(\Phi) &= a^2 (\partial_l P)(\Phi) \notag\\
    &= a^2 \partial_m \left( (\nabla\Phi)^{-1}_{lm} P\circ \Phi \right) \notag\\
    &= a^2 \partial_l (P \circ \Phi) + a^2 \partial_m \left( \left( (\nabla\Phi)^{-1}_{lm} - \delta_{lm}\right) P \circ \Phi \right) \notag\\
    &= \partial_l \left( a^2 P \circ \Phi \right) - \partial_l a^2 P \circ \Phi + \partial_m \left( a^2 \left( (\nabla\Phi)_{lm}^{-1} - \delta_{lm} \right) P \circ \Phi \right) - \partial_m a^2  \left( (\nabla\Phi)^{-1}_{lm} - \delta_{lm}\right) P \circ \Phi \notag\\
    &= \partial_l \left( a^2 P \circ \Phi \right) + \partial_m \left( a^2 \left( (\nabla\Phi)_{lm}^{-1} - \delta_{lm} \right) P \circ \Phi \right) - \partial_m a^2   (\nabla\Phi)^{-1}_{lm} P \circ \Phi  \, . \notag
\end{align}
The first term on the last line above is pressure.  The second term is the divergence of a tensor, but not a symmetric traceless one. We estimate it in the same fashion as the previous telescoping terms, noting along the way that the $L^1$ bound on $P$ costs an extra factor of $\lambda_{q+1}^{\gamma}$. The final term goes into the inverse divergence from Proposition~\ref{prop:Celtics:suck} and obeys similar bounds as before, albeit with an extra $\lambda_{q+1}^\gamma$ loss again due to the $L^1$ bound on $P$.

It remains to show that the low-frequency portions of the terms analyzed above cancel the stress.  We write that
\begin{align}
    \lambda_{q+1}^{-2} \partial_\theta &\left( \sum_{i} \eta_{q,i}^2(t) \sum_k c_k^2(R_\ell) \rho_q(x,t) \left( k^\theta k^l \Peq\left(\varrho \partial_{mm} \varrho\right) + \Peq\left(\partial_\theta \varrho \partial_l \varrho \right) \right) \right) - \alpha^{-2}\partial_\theta \RR_{\ell}^{\theta l} \notag\\
    &= \lambda_{q+1}^{-2} \partial_\theta \left( \sum_{i} \eta_{q,i}^2(t) \sum_k c_k^2(R_\ell) \rho_q(x,t) \left( k^\theta k^l \Peq\left(\varrho \partial_{mm} \varrho\right) + \Peq\left(\partial_\theta \varrho \partial_l \varrho \right) \right) \right)\notag\\
    &\qquad - \sum_{i} \eta_{q,i}^2(t) \alpha^{-2} \partial_\theta \RR_{\ell}^{\theta l} \notag\\
    &= \partial_\theta \sum_{i}\eta_{q,i}^2(x,t) \left( \sum_k c_k^2(R_\ell) \rho_q(x,t) \left(3k^\theta k^l - \delta_{\theta l}\right) - \alpha^{-2} \RR_\ell^{\theta l} \right) \notag\\
    &= \partial_\theta \sum_{i}\eta_{q,i}^2(x,t) \left(   \rho_q(x,t) \left(\alpha^{-2} \frac{\RR_\ell^{\theta l}}{\rho_q(x,t)}\right) - \alpha^{-2} \RR_\ell^{\theta l} \right) \notag\\
    &= 0 \, . \notag
\end{align}
{We have used Lemma \ref{lem:averages} and the normalization $\| \nabla \varrho_{q+1,k} \|_{L^2} =  \lambda_{q+1}$}, which is commensurate with the estimates in Proposition~\ref{pipeconstruction}.

\subsubsection{Type 2 oscillation errors}
From \eqref{eq:osc:hail:1} and \eqref{eq:osc:hail:two} and the ensuing discussion, the only non-zero terms for which $i\neq i'$ are those in which $|i-i'|\leq 1$. As only two time cutoffs are non-zero at any point in time, it therefore suffices to analyze a single term of the form
\begin{equation}
    \label{eq:osc:error:type2:worst:terms}
\mathcal{E}_{i}=a_{i}a_{i+1} \WW_{q+1,k} \circ \Phi_{q,i} \otimes  \Delta \left( \WW_{q+1,k'} \circ \Phi_{q,i+1} \right) +  a_i a_{i+1} \nabla \left( \WW_{q+1,k} \circ \Phi_{q,i} \right)^T \nabla \left( \WW_{q+1,k'} \circ \Phi_{q,i+1}\right) \, ,
\end{equation}
where we have used the simpler notation from \eqref{eq:shorthands} for the Mikado flows and the shorthand
\begin{equation}
    a_i:=a_{q,i,k} := \lambda_{q+1}^{-1} c_k^{i \text{ mod }2} \eta_{q ,i} \rhohalf\qi (\nabla \Phi_{q , i})^{-1} \, . \notag
\end{equation}
To estimate \eqref{eq:osc:error:type2:worst:terms}, we will use Proposition \ref{prop:weak:decoupling} with the following choices:
\begin{align}
\WW^1 = \WW_{q+1,k} \circ \Phi_{q,i}\, ,\qquad \WW^2 = \WW_{q+1,k'} \circ \Phi_{q,i+1}\, , \qquad v = u_{\ell}\, ,\qquad \Phi_1 = \Phi\qi \, , \qquad \Phi_2=\Phi_{q,i+1} \, , \notag\\ 
f = a_i a_{i+1}\, ,
\qquad r_1 = \frac{\ell^{-8}}{\lambda_{q+1}}\, , \qquad r_2 = r_q\, , \qquad {\mathcal{C}_f = \delta_{q+1} }\, . \notag
\end{align}
By the definition of $\mathcal{K}_0$ and $\mathcal{K}_1$ and Lemma~\ref{lem:linear:algebra}, we have that \eqref{e:inner:product:control} is satisfied. From Lemma \ref{lem:deformation}  we also have that \eqref{eq:deformation:bounds:decoupling} is satisfied for an appropriate choice of $\calc_\Phi$.  We now just need to check the bounds for $f$, which was defined as
$$
f = a_i a_{i+1} = \lambda_{q+1}^{-2}c_k^0 c_{k'}^1 \eta_{q ,i} \eta_{q,i+1}  \rho_{q, i}^{\sfrac{1}{2}}\rho_{q,i+1}^{\sfrac{1}{2}} (\nabla \Phi_{q , i})^{-1} (\nabla \Phi_{q , i+1})^{-1} \, .
$$
By the product rule \eqref{eq:a:bound:one}, and \eqref{eq:a:bound:two}, we have that 
\begin{equation}\notag
    \| \nabla^n f \|_{L^1} \leq  \delta_{q+1}  \ell^{-8n} \, ,
\end{equation}
and so \eqref{e:derivatives:on:f} is satisfied. By the inequality~\eqref{ineq:four}, we have that
\begin{equation}\notag
    {\ell^{-8(\Ndec + 4 )} \leq \lambda_{q+1}^{\Ndec(1 - \Gamma - \frac{\Gamma}{b} )}} \, ,
\end{equation}
and so \eqref{e:r:decoupling} is satisfied. Therefore, \eqref{e:weak:decoupling:conclusion} yields that
\begin{equation}
    \| \mathcal{E}_i \|_{L^1} \lesssim \delta_{q+1} r_q \lambda_{q+1}^2 \, . \label{eq:sharp:ell:one}
\end{equation}
Since $\mathcal{E}_i$ is not necessarily traceless and symmetric, we define the stress
\begin{equation}\notag
    R_{\textnormal{osc}, 2} = \sum_{i} \mathcal{R} \circ \Div \mathcal{E}_i \, .
\end{equation}
As before, we may interpolate between the sharp $L^1$ bound in \eqref{eq:sharp:ell:one} and a lossy $L^\infty$ bound to deduce that,
\begin{equation}
    \left\| R_{\textnormal{osc},2} \right\|_{L^1} \leq {\delta_{q+1}r_q \lambda_{q+1}^{2+\gamma} \leq \delta_{q+2}\lambda_{q+2}^{2-\gamma} \, ,}
\end{equation}
which concludes the analysis of the Type 2 error terms.  In the last line we have used inequality~\eqref{ineq:seven:prime}.

\begin{remark}[Different strategies for solving the ``intersection problem"]\label{rem:why}
Following \cite{bmnv21} (see section 2.5.2 for heuristics, or Proposition 4.8 for a precise statement), discretizing in space at scale $\lambda_q^{-1}$ and using $\left(\frac{\lambda_q}{\lambda_{q+1}}\right)^{\sfrac{3}{4}}$ worth of intermittency ensures that pipes from different transport maps do not intersect at all. The problem with this method is that it requires a large amount of intermittency.  This will not impede the maximization of the regularity in $L^2$ of a solution, but will cause extremely lossy $L^\infty$ estimates. So although this method prevents intersections entirely and would suffice for our current purposes, it appears to impede the successful resolution of the Onsager conjecture for the Euler-$\alpha$ equations.

A second option is that of De Lellis and Kwon from the paper \cite{dlk20}.  In this paper, the authors restrict the timescale to be much shorter than the inverse Lipschitz timescale. On this short timescale, they can also prevent intersections entirely. However, the heuristic Nash error bounds are \emph{already} sharp when utilizing the inverse Lipschitz timescale; see Remark~\ref{rem:nash:explanation}.  Shortening the timescale will damage these error terms and force the usage of more intermittency, thus preventing $b$ from approaching $1$. Thus it appears this technique also will not allow for a resolution of the Onsager conjecture for the Euler-$\alpha$ equations.

A third option would be the gluing technique of Isett \cite{Isett2018}. However, the gluing technique as employed by Isett requires global Lipschitz bounds which are not available in an intermittent convex integration scheme.  Since the usage of intermittency seems \emph{necessary} in order to obtain estimates for the transport error which allow $\nabla u\in L^2$, and there is no clear way to pass to the limit in the weak formulation of \eqref{eq:inductive:equation} unless $\nabla u \in L^2_{t,x}$, the gluing technique does not appear to be consistent with a convex integration scheme which produces a function satisfying a meaningful weak formulation of the equations. Based on these heuristics, none of the current methodologies for preventing intersections of deformed pipe flows will allow for a resolution of the Onsager conjecture for Euler-$\alpha$. Since Proposition~\ref{prop:weak:decoupling} requires no minimum amount of intermittency to function, no artificial restriction of the timescale, and no global Lipschitz bounds, it appears to be more amenable to sharper regularity estimates which would allow the scheme to reach the $L^3$-threshold.  
\end{remark}

\subsection{Nash Error}
Per \eqref{eq:Nash}, the Nash error is given by 
\begin{equation}\label{eq:Nash:redux}
   \partial_{\theta} \RR_{\text{Nash}}^{\theta l}:= \partial_\theta \left(w_{q+1}^\theta \left(u_\ell^l - \alpha^2\Delta u_\ell^l\right) -\alpha^2 \partial_\theta u_\ell^j \partial_l w_{q+1}^j - \alpha^2 \partial_\theta w_{q+1}^j\partial_l u_\ell^j \right) \, . 
\end{equation}
It is clear that the most costly terms in this expression are the second and third terms; more specifically, the instances in which the differential operator $\partial_\theta$ falls on the principal part of $\partial_l w_{q+1}^j$ or $\partial_\theta w_{q+1}^j$. For this reason we begin by analyzing one such term. In the second term in \eqref{eq:Nash:redux}, the most expensive term is then
\begin{align}
    \partial_\theta u_\ell^j \partial_\theta \partial_l \left( w_{q+1}^{(p)}\right)^j &= \partial_\theta u_\ell^j \partial_\theta \partial_l \left( \frac{1}{\lambda_{q+1}} \sum_{i\in\mathbb{Z}}\sum_{k\in\mathcal{K}} c_k(R_\ell) \eta_{q,i} \rho_q^{\sfrac{1}{2}}(x,t) (\nabla\Phi_{q,i})_{\beta j}^{-1} k^\beta \varrho_{q+1,k}(\Phi_{q,i}) \right) \, .\notag
\end{align}
We simplify again to the case in which the differential operators $\partial_\theta \partial_l$ fall on $\varrho_{q+1,k}$, which again will clearly be the most costly error term from the above expression.  Thus we are tasked with bounding
\begin{align}
     \partial_\theta u_\ell^j \left( \frac{1}{\lambda_{q+1}} \sum_{i\in\mathbb{Z}}\sum_{k\in\mathcal{K}} c_k(R_\ell) \eta_{q,i} \rho_q^{\sfrac{1}{2}}(x,t) (\nabla\Phi_{q,i})_{\beta j}^{-1} k^\beta \partial_{mn}\varrho_{q+1,k}(\Phi_{q,i}) \partial_\theta \Phi\qi^m \partial_l \Phi\qi^n \right) \, . \label{eq:Nash:messy}
\end{align}
We shall apply Proposition~\ref{prop:Celtics:suck} with the following choices, after fixing choices of $m,n\in\{1,2,3\}$:
\begin{align}
v=u_\ell\,,\qquad G^l=\partial_\theta u_\ell^j c_k(R_\ell) \eta\qi \rho_q^{\sfrac{1}{2}} (\nabla\Phi\qi)_{\beta j}^{-1} k^\beta \partial_\theta \Phi\qi^m \partial_l\Phi\qi^n \, , \qquad \calc_G = {\ell^{-7}} \notag\\
\lambda = \ell^{-5}\, , \qquad \Phi= \Phi\qi \, ,\qquad \varrho = \partial_{mn}\varrho_{q+1,k} \, ,\qquad \vartheta = \partial_{mn}\vartheta_{k,\lambda_{q+1},r_q} \, , \notag\\
\qquad \zeta = \Lambda = \lambda_{q+1} \, , \qquad \calc_* = {\lambda_{q+1}^2} r_q \, , \qquad \mu = \lambda_{q+1} r_q \, . \label{eq:choices:Nash}
\end{align}
Then from \eqref{eq:a:bound:two}, Lemma~\ref{lem:deformation}, and \eqref{eq:mollified:estimates}, we may bound $G^l$ by 
\begin{equation}
    \left\| \partial_\theta u_\ell^j \right\|_{0} \left\| c_k \rho_q^{\sfrac{1}{2}} \eta\qi \right\|_{0} \left\| (\nabla\Phi\qi)^{-1} \right\|_0 \left\| \nabla \Phi\qi \right\|_0^2 \lesssim \ell^{-7} \, .
\end{equation}
Furthermore, from \eqref{eq:a:bound:two}, the most expensive spatial derivatives arise when differentiating $c_k \rho_q^{\sfrac{1}{2}}\eta\qi$, and cost $\ell^{-5}$ from \eqref{eq:a:bound:two}. Thus we have satisfied \eqref{eq:inverse:div:DN:G} with our choices of $\calc_G$ and $\lambda$. The deformation bounds in \eqref{eq:diff:bounds} and \eqref{eq:DDpsi} follow from Lemma~\ref{lem:deformation}. The equality in \eqref{item:inverse:i} follows from applying $\partial_{mn}$ to both sides of the first equality in \eqref{item:pipe:1} from Proposition~\ref{pipeconstruction}. The desired periodicity in \eqref{item:inverse:ii} follows as well from \eqref{item:pipe:5} from Proposition~\ref{pipeconstruction}. Finally, since $\zeta=\Lambda=\lambda_{q+1}$, \eqref{eq:DN:Mikado:density} follows also from \eqref{item:pipe:5} from Proposition~\ref{pipeconstruction}. The parameter inequality in \eqref{eq:exchange} translates into
{\begin{equation}
    \left(\frac{\ell^{-5}}{\lambda_{q+1}}\right)^{\dpot} \ell^{-20} \leq \lambda_{q+1}^{-1} \, , \notag
\end{equation}}
which is inequality~\eqref{ineq:nine}. Applying \eqref{eq:inverse:div:stress:1} and recalling the factor $\frac{1}{\lambda_{q+1}}$ in front of \eqref{eq:Nash:messy}, we find that \eqref{eq:Nash:messy} satisfies \eqref{eq:inverse:div} for a traceless symmetric stress $\RR$ and pressure $P$ which satisfy the bound
\begin{equation}\label{eq:Nash:bound}
\left\| \RR \right\|_{L^1} + \left\| P \right\|_{L^1} \lesssim \lambda_{q+1}^{-1} \ell^{-7} \ell^{-20} \lambda_{q+1}^2 r_q \lambda_{q+1}^{-1} \leq {\delta_{q+2}\lambda_{q+2}^{2-\gamma}} \, ,
\end{equation}
which is a consequence of inequality~\eqref{ineq:eleven}.
Finally, since the entirety of the Nash error in \eqref{eq:Nash:redux} has mean zero since it is the divergence of a tensor, we may ignore the mean in equality \eqref{eq:inverse:div}. Furthermore, since we have bounded a term which had one spatial derivative landed on $u_\ell$ and two on the flow profile in the definition of $w_{q+1}^{(p)}$, all terms in \eqref{eq:Nash:redux} are either similar in structure and will satisfy an identical bound, or have \emph{less} derivatives landing on the flow profile and will thus satisfy better bounds.  We omit further details.

\begin{remark}[Usage of intermittency and the $L^3$ rigidity threshold]\label{rem:nash:explanation}
To the authors knowledge, the 3D Euler equations provide the only known example of a proven Onsager-type threshold between rigidity and flexibility.  In the proofs of flexibility from \cite{Isett2018}, \cite{BDLSV17}, estimates are made in $L^\infty$ rather than $L^3$ since no intermittency is employed, and so we compute the following $L^\infty$ bound for the Nash error term, which is then written as
\begin{equation}\label{eq:nash:heuristic}
    \left\| \Div^{-1}\left[ w_{q+1}^k \partial_k\left(u_q-\alpha\Delta u_q\right)^\ell -\alpha \partial_k \left( \partial_k u_q^j \partial^\ell w_{q+1}^j + \partial_k w_{q+1}^j \partial^\ell u_q^j \right) \right] \right\|_{L^\infty} \, .
\end{equation}
The first part of this error term is quite benign since no derivatives have landed on $w_{q+1}$.  The second part of the error term is less benign, since it is possible that all the derivatives can land on $w_{q+1}$.  The worst case scenario is that in which all the derivatives land on $w_{q+1}$.  Assuming that \emph{no} intermittency is being employed, we find that
\begin{align}
    \lambda_{q+1}^{-1} \delta_q^{\sfrac{1}{2}}\lambda_q \delta_{q+1}^{\sfrac{1}{2}}\lambda_{q+1}^2 &\leq \delta_{q+2} \lambda_{q+2}^2 \notag\\
    \iff \delta_{q+2}^{-1} \delta_{q+1}^{\sfrac{1}{2}} \delta_q^{\sfrac{1}{2}} &\leq \lambda_{q+2}^2 \lambda_{q+1}^{-1}\lambda_q^{-1} \notag\\
    \iff 2\beta b^2 - \beta b - \beta &\leq 2b^2 - b - 1 \notag\\
    \iff \beta &\leq 1  \, . \notag
\end{align}
We thus conclude that when estimating the Nash error, the apparent optimal $L^\infty$ bound would not allow the final solution to enjoy $C^1$ regularity. Any intermittency would \emph{weaken} this $L^\infty$ estimate, and a significant amount of intermittency would produce a bound which would be far from $C^1$ regularity, and likely also far from the sharp $L^3$ threshold.
\end{remark}

\subsection{Transport Error}
Per \eqref{eq:transport}, the transport error is given by 
\begin{align}
  \partial_{\theta} \RR_{\text{transport}}^{\theta l} :=  \partial_t (w_{q + 1} - \alpha^2 \Delta w_{q+1})^l  + u_{\ell} \cdot \nabla (w_{q + 1} - \alpha^2 \Delta w_{q+1})^l \, . \label{eq:transport:redux}
\end{align}
It is clear that the terms $(\partial_t + u_\ell \cdot \nabla)w_{q+1}$ and $(\partial_t + u_\ell \cdot \nabla)(\Delta w_{q+1}^{(c)})$ are lower order when compared with $(\partial_t + u_\ell \cdot \nabla)(\Delta w_{q+1}^{(p)})$, and so we shall bound only the latter and omit further details. Thus we are tasked with bounding
\begin{align}
   \left( \partial_t + u_\ell \cdot \nabla  \right) \left[ \Delta \left( \frac{1}{\lambda_{q+1}} \sum_{i\in\mathbb{Z}}\sum_{k\in\mathcal{K}} c_k(R_\ell) \eta_{q,i} \rho_q^{\sfrac{1}{2}}(x,t) (\nabla\Phi_{q,i})_{\beta l}^{-1} k^\beta \varrho_{q+1,k}(\Phi_{q,i}) \right) \right]  \, . \notag
\end{align}
As usual, the terms in which $\Delta$ lands anywhere besides $\rho_{q+1,k}$ are lower order, and so we shall simply bound
\begin{align}
   \left( \partial_t + u_\ell \cdot \nabla  \right)  \left( \frac{1}{\lambda_{q+1}} \sum_{i\in\mathbb{Z}}\sum_{k\in\mathcal{K}} c_k(R_\ell) \eta_{q,i} \rho_q^{\sfrac{1}{2}}(x,t) (\nabla\Phi_{q,i})_{\beta l}^{-1} k^\beta \partial_{mn}\varrho_{q+1,k}(\Phi_{q,i}) \partial_j\Phi\qi^m \partial_j \Phi\qi^n \right)   \, . \notag
\end{align}
After fixing choices of $m,n\in\{1,2,3\}$, we shall apply Proposition~\ref{prop:intermittent:inverse:div} with the following choices:
\begin{align}
    v= u_\ell \, , \qquad G^l = \left( \partial_t + u_\ell \cdot \nabla \right) \left( c_k \eta_{q,i} \rho_q^{\sfrac{1}{2}} (\nabla\Phi_{q,i})_{\beta l}^{-1} k^\beta \partial_j\Phi\qi^m \partial_j \Phi\qi^n \right) \, ,\qquad  \calc_G = {\ell^{-13}} \, , \notag\\
    {\lambda=\ell^{-5}} \, ,\qquad \Phi=\Phi\qi \, , \qquad \varrho=\partial_{mn} \varrho_{q+1,k} \, , \qquad \vartheta = \partial_{mn} \vartheta_{k,\lambda_{q+1},r_q} \, , \notag\\
    \zeta=\Lambda=\lambda_{q+1} \, , \qquad \calc_* = \lambda_{q+1}^2 r_q \, , \qquad \mu=\lambda_{q+1} r_q \, . \label{eq:choices:transport}
\end{align}
In the above choices, we have used that the material derivative of any function which has been precomposed with $\Phi\qi$ is zero, and so the material derivative falls on everything except $\partial_{mn}\rho_{q+1,k}(\Phi\qi)$. To bound $G^l$, we split up the material derivative into $\partial_t$ and $u_\ell \cdot \nabla$ and appeal to \eqref{eq:mollified:estimates}, \eqref{eq:a:bound:two} with $j=1$, \eqref{eq:eta:q:i:props}, and Lemma~\ref{lem:deformation} to bound $G^l$ by 
\begin{equation}
    (1+\left\| u_\ell\right\|_{0}) \left\| \nabla_{x,t} \left( c_k \eta\qi \rho_q^{\sfrac{1}{2}} \right) \right\|_{0} \left\| \nabla_{x,t}\left( \nabla\Phi\qi^{-1} \otimes \nabla \Phi\qi \otimes \nabla \Phi\qi \right) \right\|_0 \lesssim \ell^{-5}\cdot{\ell^{-7}} \cdot \ell^{-1} = \ell^{-13} \, . \notag
\end{equation}
Appealing to the same estimates, we see that spatial derivatives on $G^l$ cost at most $\ell^{-5}$, and so \eqref{eq:inverse:div:DN:G} is satisfied. The rest of the choices in \eqref{eq:choices:transport} are \emph{identical} to those in \eqref{eq:choices:Nash}, and so the rest of the assumptions in Proposition~\ref{prop:Celtics:suck} are satisfied. Thus \eqref{eq:inverse:div} gives a traceless symmetric stress $\RR$ and pressure $P$ satisfying bounds identical to those in \eqref{eq:Nash:bound}, save for {6 extra factors of $\ell^{-1}$ from the slightly worse choice of $\calc_G$.}  Since the highest order error terms satisfies a bound which matches the desired estimate on $\RR_{q+1}$, it only remains to address the mean of the transport error.  From the definition of $u_{q+1}$ in \eqref{eq:u:q+1} and the identification of error terms immediately below \eqref{eq:u:q+1}, we see that
\begin{equation}
    \fint \left( \partial_t + u_\ell\cdot\nabla \right) \left( w_{q+1} - \alpha^2 \Delta w_{q+1} \right) = \fint \partial_t  \left( u_{q+1} - \alpha^2 \Delta u_{q+1} \right) \, . \notag
\end{equation}
Since $u_{q+1}$ was constructed as an explicit sum of curls of vector fields, cf. \eqref{eq:full:perturbation}, the right-hand side in the above equality has zero mean, and thus the left-hand side does as well.  Therefore we may ignore the terms arising from the mean in \eqref{eq:inverse:div} as usual, concluding the analysis of the transport error.

\subsection{Parameters and inequalities}\label{ss.parameters}
We need to choose values for the parameters
\begin{align}
\beta \, , \quad b \, , \quad \Gamma \, , \quad \gamma \, ,  \quad \ell \, , \quad \Ndec \,, \quad \dpot \, , \quad \calc_\RR \, , \quad a \,  \notag
\end{align}
such that all the inequalities throughout the paper are satisfied.  We chosen the parameters in the following order, and according to the following methodology:
\begin{enumerate}
    \item[(1)] Let $\beta'=1$ be an auxiliary parameter which we use temporarily to simplify the arithmetic in a number of inequalities.  After demonstrating that each inequality is strict, we shall later choose a value of $\beta>1$ which preserves each inequality.
    \item[(2)] Set $\Gamma=\sfrac{1}{2}$.
    \item[(3)] Choose $b>600$.
    \item[(4)] Choose $\Ndec$ such that $64(\Ndec+4) < b \Ndec(1-\Gamma-\sfrac{\Gamma}{b})$.  This is possible since $\Gamma=\sfrac{1}{2}$ and $b>600$.
    \item[(5)] Choose $\dpot$ such that so that $\dpot(40-b+\Gamma(b-1)) < - b + \Gamma(b-1) - 32$.  This is possible since $b>600$ and $\Gamma=\sfrac{1}{2}$.
    \item[(6)] Choose $\gamma\in(0,1)$ such that 
    \begin{enumerate}
        \item[(i)] $-2b+(1+\gamma)b+8 < - \gamma b^2$.
        \item[(ii)] $\gamma b - 8 < - \gamma b^2$.
        \item[(iii)] $232 - b + \Gamma(b-1) < - 2\gamma b^2$.
        \item[(iv)] $264 + \Gamma(1-b) < -\gamma b^2$.
        \item[(v)] $\Gamma(1-b) + \gamma b < -\gamma b^2$.
    \end{enumerate}
    This is possible since we may choose $\gamma$ very close to $0$.
    \item[(7)] Choose $\calc_\RR$ to ensure that \eqref{eq:a:bound:one} and \eqref{eq:wp} are satisfied; this is possible since $\calc_\RR$ appears as a prefactor in the inductive assumption in \eqref{eq:inductive} for $\RR_q$, and by extension in \eqref{eq:def:rho}.
\end{enumerate}
Given the choices of $\beta '=1$, $\Gamma$, $b$, $\Ndec$, $\dpot$, and $\gamma$ above, we may now verify a number of strict inequalities. We leave room in each inequality so that later we may choose $\beta>1$, and $a$ large enough to absorb implicit constants throughout the argument. We introduce the auxiliary parameter
\begin{equation}\notag
    \delta_q' = \lambda_q^{-2\beta'} = \lambda_q^{-2} \, 
\end{equation}
in order to simplify the arithmetic, and we show at the end that one can in fact substitute $\beta>1$ for $\beta'=1$ while preserving all the inequalities. For each inequality, we write out everything in terms of $\lambda_q$. We remind the reader that, per definitions \eqref{def:freq:amp} and \eqref{eq:ell:def}, 
\begin{equation}\label{eq:def:redux}
  \lambda_q = a^{b^q} \, , \qquad \delta_q = \lambda_q^{-2\beta} \, , \qquad \ell=\lambda_q^{-8} \, .
\end{equation}
\begin{enumerate}
    \item\label{ineq:one}  From the choice of $\ell=\lambda_q^{-8}$, and a sufficiently large choice of $a$ so that $\lambda_q^{-1}\leq \alpha^2 \calc_\RR$, we have that
    $$\ell^2\lambda_q^{9} < \alpha^2 \calc_\RR \delta'_{q+2} \lambda_{q+2}^2 \impliedby -16 + 10 < -2\beta' b^2 + 2b^2 \, .  $$
    \item\label{ineq:two} From the temporary choice of $\beta'=1$, we have that
    $$\ell < \lambda_{q+1}^2\delta'_{q+1} \iff -8 < 2b^2 - 2\beta' b^2 \, .$$
    \item\label{ineq:three} From the choices $\ell=\lambda_q^{-8}$ and $b>600$, we have that
    $$\ell^{-12}< \lambda_{q+1} \iff 96 < b \, . $$
    \item\label{ineq:four}  From the choice of $\Ndec$, we have that
    $$\ell^{-8(\Ndec+4)} < \lambda_{q+1}^{\Ndec(1-\Gamma-\sfrac{\Gamma}{b})} \iff 64(\Ndec +4) < b\Ndec(1-\Gamma-\sfrac{\Gamma}{b}) \, . $$
    \item\label{ineq:five} From $b>600$ and $\Gamma=\sfrac{1}{2}$, we have that
    $$r_q^{-1} < \lambda_{q+1} \, \iff \Gamma(b-1) < b \, .$$
    \item\label{ineq:six}  - From the temporary choice of $\beta'=1$ and the choice of $\gamma$, we have that
    $$\delta'_{q+1}\lambda_{q+1}^{2+\gamma}(\ell\lambda_{q+1})^{-1} < \lambda_{q+2}^{2-\gamma}\delta'_{q+2} \iff -2\beta' b + (1+\gamma)b + 8 < -2\beta' b^2 + (2-\gamma)b^2 \, .$$
    \item\label{ineq:seven} From the temporary choice of $\beta'=1$ and the choice of $\gamma$, we have that 
    $$\delta'_{q+1}\lambda_{q+1}^{2+\gamma}\ell < \lambda_{q+2}^{2-\gamma}\delta'_{q+2} \iff -2\beta' b + (2+\gamma)b - 8 < -2\beta' b^2 + (2-\gamma)b^2 \, . $$
    \item\label{ineq:seven:prime} From the temporary choice of $\beta'=1$, the choice of $b$, and the choice of $\gamma$, we have that
    $$ \delta'_{q+1} r_q \lambda_{q+1}^{2+\gamma} < \delta'_{q+2} \lambda_{q+2}^{2-\gamma} \iff -2\beta' b + 2b + \Gamma(1-b) + \gamma b < - 2\beta' b^2 + 2b^2 -\gamma b^2 \, . $$
    \item\label{ineq:eight} From the choice of $\dpot$, we have that
    $$\left(\frac{\ell^{-5}}{\lambda_{q+1}r_q}\right)^\dpot \ell^{-4} < \left(\lambda_{q+1}r_q\right)^{-1} \iff \dpot(40-b+\Gamma(b-1)) < -b + \Gamma(b-1) - 32 \, . $$
    \item\label{ineq:nine} As in the last inequality, the choice of $\dpot$ ensures that
    $$\left(\frac{\ell^{-5}}{\lambda_{q+1}}\right)^\dpot\ell^{-20} < \lambda_{q+1}^{-1} \iff \dpot(40-b)+160 < -b \, .$$ 
    \item\label{ineq:ten} From the temporary choice $\beta'=1$, the inequality $b>600$, and the choice of $\gamma$, we have that
    $$ \ell^{-9} \ell^{-20} \left(\lambda_{q+1}r_q\right)^{-1} < \delta'_{q+2} \lambda_{q+2}^{2-2\gamma} \iff 232-b+\Gamma(b-1) < -2\beta' b^2 + (2-2\gamma)b^2\, . $$
    \item\label{ineq:eleven} From the temporary choice of $\beta'=1$, the choice $\Gamma=\sfrac{1}{2}$, and the choice of $b>600$, we have that
    $$\ell^{-33} r_q < \delta'_{q+2}\lambda_{q+2}^{2-\gamma} \iff 264 + \Gamma(1-b) < -2\beta' b^2 + (2-\gamma)b^2 \, .$$
\end{enumerate}

Now observe that each of the inequalities above is strict, and all quantities are continuous with respect to $\beta'$.  Therefore we may choose $\beta>1$ so that all inequalities \emph{remain} strict after substitution of $\beta>1$ for $\beta'=1$.  At this point, we may now choose $a$ large enough so that all implicit constants throughout the paper do not exceed the extra room still present in each inequality.

\begin{remark}\label{rem:2d:two}
Aside from estimating the intersection of Mikado flows which belong to the same Lagrangian coordinate system by Proposition~\ref{prop:weak:decoupling}, the only significant change required to apply our arguments to the 2D Euler-$\alpha$ equations is the fact that the intermittency gain from $L^2$ to $L^1$ for intermittent Mikado flows is only of order $r_q^{\sfrac{1}{2}}$ instead of $r_q$. When choosing parameters, this only affects the Nash and transport error estimates, which held from inequality~\eqref{ineq:eleven}. Substituting $\sfrac{1}{2}\cdot\Gamma(1-b)$ for $\Gamma(1-b)$ necessitates a larger choice of $b$, but the rest of the parameter choices may be made identically, and so we achieve analogous results.
\end{remark}

\appendix

\section{Proof of Lemma \ref{lem:energy:conservation}}
\label{app:energy:conservation}
In this section we provide a proof of Lemma \ref{lem:energy:conservation}.

\begin{proof}
 We follow the proof from \cite{CET94}. Mollifying \eqref{eq:alpha:euler:transport} in space with a standard radial, compactly supported mollifier $\varphi_{\varepsilon}$, {assuming differentiability in time,} and integrating in space against $u_i^{\varepsilon}=u \ast \varphi_\varepsilon$ gives
\begin{align}
\label{eq:mollifed:energy}
  &\frac{1}{2} \frac{d}{dt} \left(\| u_i^{\varepsilon} \|_{L^2}^2 + \alpha^2 \| \nabla u_i^{\varepsilon}\|_{L^2}^2 \right)  \notag\\
  &= \int_{\T^n } (u_j u_i)^{\varepsilon} \partial_j u_i^{\varepsilon} + \alpha^2 (u_j \partial_k u_i )^{\varepsilon} \partial_j \partial_k u_i^{\varepsilon} + \alpha^2 (\partial_k u_j \partial_k u_i)^{\varepsilon} \partial_ju_i^{\varepsilon} - \alpha^2 (\partial_k u_j \partial_i u_j)^{\varepsilon}  \partial_k u_i^{\varepsilon} \, dx \, 
\end{align}
where we have used Remark \ref{rem:weak}.

We wish to show that the right-hand side vanishes when $\varepsilon$ is sent to 0. 
We recall some standard estimates for mean-zero $f \in B_{3, \infty}^{s}$ where ${1<s<2}$; see for example \cite{CET94,bcd}.
\begin{subequations}
\label{eq:Besov:estimates}
\begin{align}
    &\| \nabla f(\cdot ) - \nabla f(\cdot  - y)\|_{L^3} \lesssim |y|^{s -1} \| \nabla f \|_{B_{3 , \infty}^{s - 1}}\\
   & \| f - f^{\varepsilon} \|_{L^3} \lesssim \varepsilon \| f \|_{B_{3, \infty}^{s}}  \\ 
    & \| \nabla f - \nabla f
    ^{\varepsilon} \|_{L^3} \lesssim \varepsilon^{s -1} \| \nabla f \|_{B_{3, \infty}^{s-1}} \\ 
    &\| \grad^2 f^{\varepsilon} \|_{L^3} \lesssim \varepsilon^{s-2} \| \nabla f \|_{B_{3, \infty}^{s - 1}} \\ 
    &{\left\| \nabla f \right\|_{L^3} \lesssim \left\| \nabla f \right\|_{B^{s-1}_{3,\infty}}}\\ 
    & \left\| \nabla f \right\|_{B^{s-1}_{3,\infty}} \lesssim \left\| f \right\|_{B^{s}_{3,\infty}}\\ 
    &{\left\| f^\varepsilon \right\|_{B^{s}_{3,\infty}}\lesssim \left\|  f \right\|_{B^{s}_{3,\infty}}} \, . 
\end{align}
\end{subequations}
Finally, recall the double commutator identity from \cite{CET94}:
\begin{equation}
    \label{eq:double:commutator:estimate}
    (fg)^{\varepsilon} = f^{\varepsilon}g^{\varepsilon} - (f - f^{\varepsilon})(g -g^{\varepsilon}) + r_{\varepsilon}(f,g) \, ,
\end{equation}
where $f^{\varepsilon}(x) = f * \varphi_{\varepsilon}(x)$ and
\begin{equation}
    \label{eq:error}
    r_{\varepsilon}(f,g) = \int_{\T^n} \varphi_{\varepsilon}(y) (f(x - y) - f(x))(g(x - y) - g(x)) \, dy \, .
\end{equation}
Then the right-hand side of \eqref{eq:mollifed:energy} can be written as
\begin{align}
     &\int_{\T^n } (u_j u_i)^{\varepsilon} \partial_j u_i^{\varepsilon} + \alpha^2 (u_j \partial_k u_i )^{\varepsilon} \partial_j \partial_k u_i^{\varepsilon} + \alpha^2 (\partial_k u_j \partial_k u_i)^{\varepsilon} \partial_ju_i^{\varepsilon} - \alpha^2 (\partial_k u_j \partial_i u_j)^{\varepsilon}  \partial_k u_i^{\varepsilon} dx 
    \notag\\
    &= \int_{\T^n} u_j^{\varepsilon}u_i^{\varepsilon} \partial_j u_i^{\varepsilon} + \alpha^2\left[ u_j^{\varepsilon}\partial_ku_i^{\varepsilon}  \partial_j \partial_k u_i^{\varepsilon}   + \partial_ku_j^{\varepsilon} \partial_k u_i^{\varepsilon} \partial_j u_i^{\varepsilon} - \partial_ku_j^{\varepsilon} \partial_i u_j^{\varepsilon} \partial_k u_i^{\varepsilon} \right] dx 
    \notag\\ 
    & - \int_{\T^n}  (u_j - u_j^{\varepsilon})(u_i - u_i^{\varepsilon}) \partial_j u_i^{\varepsilon} dx 
    \notag\\
   &- \alpha^2\int_{\T^n} \left[( u_j -  u_j^{\varepsilon})(\partial_k u_i - \partial_k u_i^{ \varepsilon})\partial_j \partial_ku_i^{\varepsilon} + (\partial_k u_j  - \partial_k u_j^{\varepsilon}) (\partial_k u_i - \partial_k u_i^{\varepsilon}) \partial_j u_i^{\varepsilon} - (\partial_k u_j - \partial_k u_j^{\varepsilon} )(\partial_i u_j - \partial_i u_j^{\varepsilon}) \partial_k u_i^{\varepsilon}   \right] dx \notag\\
   &+  \int_{\T^n} r_{\varepsilon}(u_j, u_i) \partial_j u_i^{\varepsilon} + \alpha^2\left[ r_{\varepsilon } (u_j, \partial_ku_i ) \partial_j \partial_k u_i^{\varepsilon}   + r_{\varepsilon} (\partial_ku_j ,\partial_k u_i) \partial_j u_i^{\varepsilon} - r_{\varepsilon}(\partial_ku_j , \partial_i u_j) \partial_k u_i^{\varepsilon} \right] dx \, . \label{eq:comm:messy}
    \end{align}
We have 
\begin{equation*}
    \int_{\T^n} u_j^{\varepsilon} u_i^{\varepsilon} \partial_j u_i^{\varepsilon} dx = \int_{\T^n} u_j^{\varepsilon} \partial_k u_i^{\varepsilon} \partial_j \partial_k u_i^{\varepsilon} = 0
\end{equation*}
using integration by parts and that $\Div u = 0$. Furthermore, the equality
\begin{equation*}
       \partial_k u_j^{\varepsilon} \partial_ku_i^{\varepsilon} \partial_j  u_i^{\varepsilon} = \partial_k u_j^{\varepsilon} \partial_iu_j^{\varepsilon} \partial_k u_i^{\varepsilon}
\end{equation*} 
follows from switching the roles of $i$ and $j$. Therefore the first four terms after the equal sign on the right-hand side of \eqref{eq:comm:messy} vanish, and we can bound the remaining terms by  
   \begin{align*}
     &\lesssim \| u - u^{\varepsilon}\|_{L^3}^2 \| \nabla u^{\varepsilon} \|_{L^3} + \| u - u^{\varepsilon}\|_{L^3} \| \nabla u - \nabla u^{\varepsilon} \|_{L^3} \| \nabla^2 u^{\varepsilon} \|_{L^3} + \| \nabla u - \nabla u^{\varepsilon} \|_{L^3}^2 \| \nabla u^{\varepsilon} \|_{L^3}  
      \\
      &+ \| r_{\varepsilon} (u, u) \|_{L^{\frac{3}{2}}} \| \nabla u^{\varepsilon} \|_{L^3} + \|r_{\varepsilon} (u, \nabla u)\|_{L^{\frac{3}{2}}} \|  \nabla^2 u^{\varepsilon}\|_{L^3} + \| r_{\varepsilon}( \nabla u, \nabla u) \|_{L^{\frac{3}{2}}} \| \nabla u^{\varepsilon} \|_{L^3
      }
      \\
      &\lesssim \varepsilon^{{2}} \| u \|_{B_{3, \infty}^{s}  }^3 + \varepsilon^{ {2(s-1)}} \| u \|_{B_{3 ,\infty}^{s}}^3.
   \end{align*}
  Thus, taking $s > 1$ shows that the right-hand side of \eqref{eq:mollifed:energy} converges to zero when $\varepsilon\rightarrow 0$.  
  {Concluding the proof in the case of continuity in time may be done analogously as for the classical Euler equations, and we omit further details}.
\end{proof}

\section{Proof of Lemma~\ref{lem:linear:algebra}}
\label{app:linear:algebra}
In this section we provide a proof of Lemma \ref{lem:linear:algebra}.
\begin{proof}
We first construct $\mathcal{K}_0$ and $c_i^0$ by hand while allowing $\left\| \RR \right\|_{0}\leq 1$, afterwards constructing $\mathcal{K}_n$ for $1\leq n \leq N$ and choosing $\varepsilon$.
Consider a symmetric traceless matrix $\RR$, which without loss of generality may be written as
\begin{equation}\label{eq:matrix}
\RR = \begin{pmatrix}
a & c & d\\
c & b & e\\
d & e & -a-b 
\end{pmatrix} \, .
\end{equation}
Notice that the set of such matrices is 5-dimensional, and combined with the condition on the sum of $(c_i^0)^2$ in \eqref{eq:to:check} will require a set of at least six vectors. The extra three vectors will ensure that the coefficients are all positive. Ignoring for now the upper subscript $0$ on the vectors $k_i^0\in\mathcal{K}_0$, we set 
\begin{equation}\notag
    k_1 = e_1 \, , \qquad k_2 = e_2 \, , \qquad k_3 = e_3 \, .
\end{equation}
For ease of notation, let us define
\begin{equation}\notag
    f(k_i) = 3k_i\otimes k_i - \textnormal{Id} \, .
\end{equation}
Then it is clear that $f(k_1)$, $f(k_2)$, and $f(k_3)$ are symmetric traceless matrices which only contain entries on the diagonal; specifically, we have
\begin{equation}\label{eq:mixed:matrices}
f(k_1) = \begin{pmatrix}
2 & 0 & 0\\
0 & -1 & 0\\
0 & 0 & -1 
\end{pmatrix}\, , \qquad 
f(k_2) =\begin{pmatrix}
-1 & 0 & 0\\
0 & 2 & 0\\
0 & 0 & -1 
\end{pmatrix}\, , \qquad 
f(k_3) = \begin{pmatrix}
-1 & 0 & 0\\
0 & -1 & 0\\
0 & 0 & 2 
\end{pmatrix}\, . \qquad 
\end{equation}
We shall use $f(k_1)$ and $f(k_2)$ to engender the entries of the matrix \emph{on} the diagonal in \eqref{eq:matrix}, while we shall use a ``balanced" sum of the form $c(f(k_1)+f(k_2)+f(k_3))$ to ensure the second condition in \eqref{eq:to:check}. We further set 
\begin{align}
        k_4&= \frac{3e_1+4e_2}{5}\, , \qquad k_5=\frac{3e_1+4e_3}{5}\, ,\qquad  k_6=\frac{3e_2+4e_3}{5} \notag\\
        k_7&= \frac{3e_1-4e_2}{5}\, , \qquad k_8=\frac{3e_1-4e_3}{5}\, ,\qquad  k_9=\frac{3e_2-4e_3}{5}\, . \notag
\end{align}
Then 
\begin{align}\label{eq:mixed:up:matrices}
f(k_4) &= \frac{1}{25}\begin{pmatrix}
2 & 36 & 0\\
36 & 23 & 0\\
0 & 0 & -25 
\end{pmatrix}\, , \qquad 
f(k_5) = \frac{1}{25}\begin{pmatrix}
2 & 0 & 36\\
0 & -25 & 0\\
36 & 0 & 23 
\end{pmatrix}\, , \qquad 
f(k_6) = \frac{1}{25}\begin{pmatrix}
-25 & 0 & 0\\
0 & 2 & 36\\
0 & 36 & 23 
\end{pmatrix}\\
f(k_7) &= \frac{1}{25}\begin{pmatrix}
2 & -36 & 0\\
-36 & 23 & 0\\
0 & 0 & -25 
\end{pmatrix}\, , \quad 
f(k_8) = \frac{1}{25}\begin{pmatrix}
2 & 0 & -36\\
0 & -25 & 0\\
-36 & 0 & 23 
\end{pmatrix}\, , \quad 
f(k_9) = \frac{1}{25}\begin{pmatrix}
-25 & 0 & 0\\
0 & 2 & -36\\
0 & -36 & 23 
\end{pmatrix}\, .  \notag
\end{align}
We shall use $f(k_4)$, $f(k_5)$, $f(k_6)$, $f(k_7)$, $f(k_8)$, and $f(k_9)$ to engender the entries of \eqref{eq:matrix} \emph{off} the diagonal. 

It is simple to check that the set $\{f(k_1),f(k_2),f(k_4),f(k_5),f(k_6)\}$ is a linearly independent set in the space of symmetric traceless matrices.  Therefore, there exist smooth functions $\{\tilde c_1, \tilde c_2, \tilde c_4, \tilde c_5, \tilde c_6\}$ given by the solutions of a linear system of equations such that for all symmetric traceless matrices $\RR$ satisfying $\left\| \RR \right\|_0\leq 1$,
\begin{equation}\label{eq:basic:matrices}
    \tilde c_1 f(k_1) + \tilde c_2 f(k_2) + \tilde c_4 f(k_4) + \tilde c_5 f(k_5) + \tilde c_6 f(k_6) = \RR \, .
\end{equation}
Unfortunately the functions $\tilde c_i$ are not strictly positive.  Let us define the auxiliary parameter
$$  c_0 = \max_{\substack{\left\| \RR \right\|_0\leq 1, i=4,5,6}} |\tilde c_i(\RR)| \, . $$
Then from \eqref{eq:basic:matrices} and \eqref{eq:mixed:up:matrices}, we have that 
\begin{align}
    \left(1 -\delta_{ij} \right) &\bigg{(} \tilde c_1 f(k_1) + \tilde c_2 f(k_2) + \tilde c_4 f(k_4) + \tilde c_5 f(k_5) + \tilde c_6 f(k_6) \notag\\
    &\qquad + 2c_0 \left( f(k_4) + f(k_5) + f(k_6) + f(k_7) + f(k_8) + f(k_9) \right) \bigg{)}^{ij} = \left(1 -\delta_{ij} \right) \RR^{ij}\, ; \notag
\end{align}
that is, off the diagonal, the matrix on the left hand side of the equation is equal to $\RR$. The advantage now is that the coefficients on $f(k_i)$ for $4\leq i \leq 9$ are all positive. In order to ensure equality \emph{on} the diagonal, we may replace the coefficients $\tilde c_1$ and $\tilde c_2$ on $f(k_1)$ and $f(k_2)$ with new coefficients $\dot c_1$ and $\dot c_2$ such that
\begin{align}
      &\bigg{(}\dot c_1 f(k_1) + \dot c_2 f(k_2) + \tilde c_4 f(k_4) + \tilde c_5 f(k_5) + \tilde c_6 f(k_6) \notag\\
    &\qquad\qquad + 2c_0 \left( f(k_4) + f(k_5) + f(k_6) + f(k_7) + f(k_8) + f(k_9) \right) \bigg{)}^{ij}  =  \RR^{ij}\,  \label{eq:almost}
\end{align}
for all $1\leq i,j\leq 3$. Since the coefficients $\dot c_1$ and $\dot c_2$ are not necessarily positive, we set 
$$  \dot c_0 = \max_{\norm{\RR}_0\leq 1,i=1,2} \left| \dot c_i (\RR) \right| \, .  $$
Then from \eqref{eq:mixed:matrices} and \eqref{eq:almost}, we have that
\begin{align}
      &\bigg{(}\dot c_1  f(k_1) + \dot c_2 f(k_2) + 2\dot c_0 \left( f(k_1)+f(k_2)+f(k_3) \right) \notag\\
      &\qquad + \tilde c_4 f(k_4) + \tilde c_5 f(k_5) + \tilde c_6 f(k_6) + 2c_0 \left( f(k_4) + f(k_5) + f(k_6) + f(k_7) + f(k_8) + f(k_9) \right) \bigg{)}^{ij}  =  \RR^{ij}\,  \notag
\end{align}
Aggregating coefficients for each of the nine tensors on the left-hand side, we see that each is strictly positive.  We still have to ensure the second condition in \eqref{eq:to:check}; however, this is easily achieved by replacing the constant $\dot c_0$ with a non-negative, bounded function $c_0(\RR):B_1(0)\rightarrow [0,\infty)$ which depends on $\RR$ and imposes that the sum of the coefficients is equal for all $\RR\in B_1(0)$. Thus we have achieved everything in \eqref{eq:to:check} for $n=0$. Note that the value of $\calc_{\rm sum}$ could in principle be computed explicitly and depends on the choice of the function $c_0(\RR)$.

To construct $\mathcal{K}_n$ for $n=1$, choose a rational rotation matrix $O_1$ with $0 < \left\| O_1 - \textnormal{Id} \right\|_0 \ll 1$, and set $\mathcal{K}_1= O_1 \mathcal{K}_0$.  Plugging the rotated vectors $O_1 k_i^0$ into \eqref{eq:to:check}, we find that the effect of $O_1$ is that
\begin{equation}
    \sum_{i=1}^9 \left(c_i^0\right)^2\left(\RR\right)\left( 3 k_i^0 \otimes k_i^0 - \textnormal{Id} \right) = \RR \quad \rightarrow \quad \sum_{i=1}^9 \left(c_i^0\right)^2\left(\RR\right)O_1\left( 3 k_i^0 \otimes k_i^0 - \textnormal{Id} \right)O_1^T = O_1 \RR O_1^T \, . \notag
\end{equation}
Using the equalities $\textnormal{tr}(AB)=\textnormal{tr}(BA)$ and $O^T=O^{-1}$, we have that $O_1 \RR O_1^T$ is still traceless and symmetric. Define 
$$  c^1_i\left(\RR\right) = c^0_i\left(O_1^T \RR O_1 \right) \,   $$
for $\left\|\RR\right\|$ sufficiently small so that $c^0_i\left(O_1 \RR O_1^T\right)$ is well-defined and strictly positive for each $i$. Then this specifies a value $\varepsilon_1$ such that for all $\left\| \RR \right\|_0 \leq \varepsilon_1$,
\begin{align}
   \sum_{i=1}^9 \left(c_i^1\right)^2\left(\RR\right)\left( 3 k_i^1 \otimes k_i^1 - \textnormal{Id} \right) &= \sum_{i=1}^9 \left(c^0_i\right)^2 \left( O_1^T \RR O_1\right) O_1 \left( 3 k_i^0 \otimes k_i^0 - \textnormal{Id} \right) O_1^T \notag\\
   &= \RR \, , \notag
\end{align}
showing that the first equality in \eqref{eq:to:check} is satisfied for $n=1$.  The second equality in \eqref{eq:to:check} comes immediately from the construction of the $c_i^0$'s and $c^1_i$'s.  In addition, $\mathcal{K}_0$ and $\mathcal{K}_1$ will be disjoint if $0 < \left\| O_1 - \textnormal{Id} \right\|_0\ll 1$.  Iterating this procedure and taking the minimum value of $\varepsilon_n$ concludes the proof.
\end{proof}


\subsection*{Acknowledgements}
The authors thank Vlad Vicol and Steve Shkoller for stimulating conversations. RB was partially supported by the NSF under Grant No. DMS-1911413 and No. DMS-195435.  MN was partially supported by the NSF under Grant No. DMS-1928930 while participating in a program hosted by the Mathematical Sciences Research Institute during the spring 2021 semester, and by the NSF under Grant No. DMS-1926686 while a member at the Institute for Advanced Study. 

\bibliographystyle{abbrv}
\bibliography{cite}

\end{document}